\numberwithin{equation}{section}
\newtheorem{theorem}[equation]{Theorem}
\newtheorem{lemma}[equation]{Lemma}
\newtheorem{proposition}[equation]{Proposition}
\theoremstyle{definition}
\newcommand{\bbU}{\mathbb{U}}
\newcommand{\V}{\mathbb{V}}
\newcommand{\D}{\mathbb{D}}
\newcommand{\J}{\mathbb{J}}
\newcommand{\R}{\mathbb{R}}
\newcommand{\N}{\mathbb{N}}
\newcommand{\Z}{\mathbb{Z}}
\newcommand{\T}{\mathbb{T}}
\newcommand{\GG}{\mathbb{G}}
\newcommand{\dif}{\mathrm{d}}
\renewcommand{\C}{\mathbb{C}}
\newcommand{\bfa}{\mathbf{a}}
\newcommand{\bfA}{\mathbf{A}}
\newcommand{\calA}{\mathcal{A}}
\newcommand{\calP}{\mathcal{P}}
\newcommand{\calB}{\mathcal{B}}
\newcommand{\calM}{\mathcal{M}}
\newcommand{\calH}{\mathcal{H}}
\newcommand{\calS}{\mathcal{S}}
\newcommand{\frakg}{\mathfrak{g}}
\newcommand{\frakm}{\mathfrak{m}}
\newcommand{\frakq}{\mathfrak{q}}
\newcommand{\bbI}{\mathbb{I}}
\newcommand{\FT}{\mathcal{F}} % Fourier transform
\newcommand{\loc}{\mathrm{loc}}
\newcommand{\Dini}{\mathrm{Dini}}
\newcommand{\logDini}{\mathrm{logDini}}
\newcommand*{\DMO}[1]{\expandafter\DeclareMathOperator\csname #1\endcsname {#1}}
\DeclarePairedDelimiter\abs{\lvert}{\rvert}
\DeclarePairedDelimiter\meas{\lvert}{\rvert}
\DeclarePairedDelimiter\norm{\lVert}{\rVert}
\DeclarePairedDelimiter\card{\lvert}{\rvert}
\DeclarePairedDelimiterX\innerp[2]{\langle}{\rangle}{#1,#2}
\providecommand\given{}
\newcommand\SetSymbol[1][]{%
\nonscript\:#1\vert
\allowbreak
\nonscript\:
\mathopen{}}
\DeclarePairedDelimiterX\Set[1]\{\}{\renewcommand\given{\SetSymbol[\delimsize]}#1}
\newcommand{\one}{\mathbf{1}}
\newcommand{\ind}[1]{\one_{#1}}
\begin{document}
\title[Jump inequalities]{A bootstrapping approach to jump inequalities\\ and their applications}
\author{Mariusz Mirek}
\address[Mariusz Mirek]{
Department of Mathematics,
Rutgers University,
Piscataway, NJ 08854, USA \&
Instytut Matematyczny,
Uniwersytet Wroc{\l}awski,
Plac Grunwaldzki 2/4,
50-384 Wroc{\l}aw
Poland}
\email{mirek@math.uni.wroc.pl}

\author{Elias M.\ Stein}
\address[Elias M.\ Stein]{
Department of Mathematics,
Princeton University,
Princeton,
NJ 08544-100 USA}
\email{stein@math.princeton.edu}

\author{Pavel Zorin-Kranich}
\address[Pavel Zorin-Kranich]{
Mathematical Institute,
University of Bonn,
Endenicher Allee 60,
53115 Bonn,
Germany}
\email{pzorin@math.uni-bonn.de}

\begin{abstract}
The aim of this paper is to present an abstract and general approach to jump inequalities in harmonic analysis. Our principal conclusion is the refinement of $r$-variational estimates, previously known for $r>2$, to end-point results for the jump quasi-seminorm corresponding to $r=2$.
This is applied to the dimension-free results recently obtained by the first two authors in collaboration with Bourgain, and Wr\'obel, and also to operators of Radon type treated by Jones, Seeger, and Wright.
\end{abstract}

\thanks{Mariusz Mirek was partially supported by the Schmidt Fellowship and the IAS Found for Math.\ and by the National Science Center, NCN grant DEC-2015/19/B/ST1/01149.
Elias M.\ Stein was partially supported by NSF grant DMS-1265524.
Pavel Zorin-Kranich was partially supported by the Hausdorff Center for Mathematics and DFG SFB-1060.}

\maketitle

\section{Introduction}
\label{sec:introduction}
Variational and jump inequalities in harmonic analysis, probability, and ergodic theory have been studied extensively since \cite{MR1019960}, where a variational version of the Hardy--Littlewood maximal function was introduced. The purpose of this paper is to formulate general sufficient conditions that allow us to deal with variational and jump inequalities for a wide class of operators. Our approach will be based on certain bootstrap arguments. As an application we extend the known $L^p$ estimates for $r$-variations for $r>2$ (see definition \eqref{eq:def:Vr}) to end-point assertions for the jump quasi-seminorm $J_2^p$ (see definition \eqref{eq:jump-space}), which corresponds to $r=2$. In this way our results will extend previously recently obtained assertions in \cite{MR3777413} and \cite{arxiv:1804.07679} for dimension-free estimates given for $r>2$, as well as a number of results in~\cite{MR2434308} for operators of Radon type.

We recall the notation for jump quasi-seminorms from \cite{arxiv:1808.04592}.
For any $\lambda>0$ and $\bbI \subset \R$ the \emph{$\lambda$-jump counting function} of a function $f : \bbI \to \C$ is defined by
\begin{align}
\label{eq:def:jump}
\begin{split}
N_{\lambda}(f)& :=
N_{\lambda}(f(t) : t\in\bbI)\\
&:=\sup \Set{J\in\N \given \exists_{\substack{t_{0}<\dotsb<t_{J}\\ t_{j}\in\bbI}} : \min_{0<j\leq J} \abs{f(t_{j})-f(t_{j-1})} \geq \lambda}.
\end{split}
\end{align}
and the $r$-variation seminorm by
\begin{align}
\label{eq:def:Vr}
\begin{split}
V^{r}(f)
:=&V^{r}(f(t) : t\in\bbI)\\
:=&
\begin{cases}
\sup_{J\in\N} \sup_{\substack{t_{0}<\dotsb<t_{J}\\ t_{j}\in\bbI}}\Big(\sum_{j=1}^{J} \abs{f(t_{j})-f(t_{j-1})}^{r} \Big)^{1/r},
&
0< r <\infty,\\
\sup_{\substack{t_{0}<t_{1}\\ t_{j}\in\bbI}} \abs{f(t_{1})-f(t_{0})},
& r = \infty,
\end{cases}
\end{split}
\end{align}
where the former supremum is taken over all finite increasing sequences in $\bbI$.

Throughout the article $(X,\calB,\frakm)$ denotes a $\sigma$-finite measure space. For a function $f : X \times \bbI \to \C$ the jump quasi-seminorm on $L^p(X)$ for $1<p<\infty$ is defined by
\begin{align}
\label{eq:jump-space}
\begin{split}
J^{p}_{2}(f):=J^{p}_{2}(f : X \times \bbI \to \C)&:=J^{p}_{2}((f(\cdot, t))_{t\in\bbI}):=J^{p}_{2}((f(\cdot, t))_{t\in\bbI} : X \to \C) \\
&:=\sup_{\lambda>0} \norm[\big]{ \lambda N_{\lambda}(f(\cdot, t): t\in \bbI)^{1/2} }_{L^{p}}.
\end{split}
\end{align}

In this connection by \cite[Lemma 2.12]{arxiv:1808.04592} we note that
\begin{equation}
\label{eq:V-vs-jump}
\norm{V^{r}(f)}_{L^{p,\infty}}
\lesssim_{p, r}
J^{p}_{2}(f)
\leq
\norm{V^{2}(f)}_{L^p}
\end{equation}
for $r>2$, and the first inequality fails for $r=2$.

We now briefly list our main results.

\begin{enumerate}
\item The extension to the jump quasi-seminorm $J^p_2$ of dimension-free estimates for maximal averages over convex sets, as given by Theorem~\ref{thm:body-dyadic}, Theorem~\ref{thm:body-short} and Theorem~\ref{thm:lq-ball-short} below.
\item The corresponding extension to $J^p_2$ of the previous dimension-free estimates for cubes in the discrete setting, see Theorem~\ref{jump-discrete}.
\item The general $J^p_2$ results for operators of Radon type (both averages and singular integrals) in Theorem~\ref{thm:radon-av} and Theorem~\ref{thm:radon-sing}, related to the previous results in \cite{MR2434308}.
\end{enumerate}

Underlying the proofs of all these results will be the basic facts about the jump quantity $J^p_2$ obtained in our recent paper \cite{arxiv:1808.04592}, and the bootstrap arguments in Section~\ref{sec:gener-prel} of the present paper.
The reader might compare the methods in Section~\ref{sec:gener-prel} with related arguments in \cite[Section 2.2]{MR3777413} as well as \cite{MR0466470}, \cite{MR837527}, \cite{MR828824}, and Christ's observation included in \cite{MR949003}.
The techniques in Section~\ref{sec:gener-prel} will be carried out in the following framework. We assume that we are given a measure space $(X,\calB,\frakm)$ which is endowed with a sequence of linear operators $(S_j)_{j\in\Z}$ acting on $L^1(X)+ L^{\infty}(X)$ that play the role of the Littlewood--Paley operators. Namely, the following conditions are satisfied:
\begin{enumerate}
\item The family $(S_j)_{j\in\Z}$ is a resolution of the identity on $L^2(X)$, i.e. the identity
\begin{align}
\label{eq:3}
\sum_{j\in\Z}S_j = \operatorname{Id}
\end{align}
holds in the strong operator topology on $L^2(X)$.
\item For every $1<p<\infty$ we have
\begin{align}
\label{eq:4}
\norm[\Big]{\big(\sum_{j\in\Z} \abs{S_{j} f}^{2} \big)^{1/2}}_{L^p}
\lesssim
\norm{f}_{L^p}, \qquad f\in L^p(X).
\end{align}
\end{enumerate}

Suppose now we have a family of linear operators $(T_t)_{t\in\bbI}$ acting on $L^{1}(X) + L^{\infty}(X)$, where the index set $\bbI$ is a countable subset of $(0, \infty)$. We assume that $\bbI\subseteq (0, \infty)$ to make our exposition consistent with the results in the literature. One of our aims is to understand what kind of conditions have to be imposed on the family $(T_t)_{t\in\bbI}$, in terms of its interactions with the Littlewood--Paley operators $(S_j)_{j\in\Z}$ to obtain the inequality
\begin{align}
\label{eq:18}
J^{p}_{2}((T_tf)_{t\in\bbI} : X \to \C)\lesssim\norm{f}_{L^p}
\end{align}
in some range of $p$'s. We accomplish this task in Section~\ref{sec:gener-prel} by proving Theorem~\ref{thm:duo+rubio:max} and Theorem~\ref{thm:short-var} for positive operators\footnote{A linear operator $T$ is positive if $Tf\ge 0$ for every $f\ge0$.} by certain bootstrap arguments, and Theorem~\ref{thm:duo+rubio:jump} for general operators. Our approach will be based on extension of ideas from \cite{MR837527} and \cite{MR3777413} to a more abstract setting.

As mentioned above it has been very well known since Bourgain's article \cite{MR1019960} that $r$-variational estimates (and consequently maximal estimates, see \eqref{eq:def:Vr}) can be deduced from jump inequalities.
Namely, a priori jump estimates \eqref{eq:18} in an open range of $p\in (1, \infty)$ imply
\[
\norm{V^{r}(T_tf : t\in\bbI)}_{L^p}\lesssim_{p, r}\norm{f}_{L^p}
\]
in the same range of $p$'s and for all $r\in(2, \infty]$. This follows from \eqref{eq:V-vs-jump} and interpolation.
Therefore, it is natural to say that the jump inequality in \eqref{eq:8} is an endpoint for $r$-variations at $r=2$. On the other hand, we also know that the range of $r\in(2, \infty]$ in $r$-variational estimates, for many operators in harmonic analysis, is sharp due to the sharp estimates in L\'epingle's inequality for martingales, see \cite{arxiv:1808.04592} and the references therein.

Here and later we write $a \lesssim b$ if $a \leq C b$, where the constant $0<C<\infty$ is allowed to depend on $p$, but not on the underlying abstract measure space $X$ or function $f$. If $C$ is allowed to depend on some additional parameters this will be indicated by adding a subscript to the symbol $\lesssim$.

\subsection{Applications to dimension-free estimates}

An important application of the results from Section~\ref{sec:gener-prel} will be bounds independent of the dimension in jump inequalities associated with the Hardy--Littlewood averaging operators. Let $G \subset \R^{d}$ be a symmetric convex body, that is, a non-empty symmetric convex open bounded subset of $\R^d$.
Define for $t>0$ and $x\in\R^d$ the averaging operator

\begin{equation}
\label{eq:av-op}
\calA_{t}^{G}f(x) := \abs{G}^{-1} \int_{G} f(x-ty) \dif y, \quad f\in L^1_{\loc}(\R^d).
\end{equation}

It follows from the spherical maximal theorem that in the case that $G$ is the Euclidean ball the maximal operator $\calA_{\star}^Gf:=\sup_{t>0} \abs{\calA_{t}^{G}f}$ corresponding to \eqref{eq:av-op} is bounded on $L^{p}(\R^{d})$ for all $p>1$, uniformly in $d\in\N$ \cite{MR663787}.
This result was extended to arbitrary symmetric convex bodies $G\subset\R^d$ in \cite{MR868898} (for $p=2$) and \cite{MR853451,MR828824} (for $p>3/2$).
For unit balls $G=B^q$ induced by $\ell^{q}$ norms in $\R^d$ the full range $p>1$ of dimension-free estimates was established in \cite{MR1042048} (for $1\leq q < \infty$) and \cite{MR3273441} (for cubes $q=\infty$) with constants depending on $q$.
In the latter case the product structure of the cubes is important; this result was recently extended to products of Euclidean balls of arbitrary dimensions \cite{arxiv:1703.07728}.

Variational versions of most of the aforementioned dimension-free estimates were obtained in \cite{MR3777413} for $r>2$.
In this article we give a shorter and more self-contained proof of the main results of \cite{MR3777413} and extend them to the endpoint $r=2$ by appealing to Theorem~\ref{thm:duo+rubio:max} and Theorem~\ref{thm:short-var}.
A notable simplification is that we do not use the maximal estimates as a black box.
In particular, we reprove all dimension-free estimates for the maximal function $\calA_{\star}^{G}$.

In view of \eqref{eq:V-vs-jump} and by real interpolation, Theorem~\ref{thm:body-dyadic} below extends \cite[Theorem 1.2]{MR3777413}.

\begin{theorem}
\label{thm:body-dyadic}
Let $d\in\N$ and $G \subset \R^{d}$ be a symmetric convex body.
Then for every $1<p<\infty$ and $f\in L^{p}(\R^{d})$ we have
\begin{equation}
\label{eq:body-dyadic}
J^{p}_{2}( (\calA_{2^k}^{G} f)_{k\in\Z} : \R^{d} \to \C)
\lesssim
\norm{f}_{L^{p}},
\end{equation}
where the implicit constant is independent of $d$ and $G$.
\end{theorem}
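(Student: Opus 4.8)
The plan is to obtain \eqref{eq:body-dyadic} by applying the abstract jump theorem (Theorem~\ref{thm:duo+rubio:max}) to the positive operators $T_{2^k}:=\calA_{2^k}^{G}$, $k\in\Z$, after supplying a dimension-free Littlewood--Paley family $(S_j)_{j\in\Z}$ on $\R^d$ and checking its interaction hypotheses; the short-variation theorem is not needed here, since the parameter set is already the dyadic one.

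First I would normalise $G$. A direct change of variables gives $\calA_t^{AG}f=\bigl(\calA_t^{G}(f\circ A)\bigr)\circ A^{-1}$ for every invertible linear $A$, and since $\norm{g\circ A}_{L^p}=\abs{\det A}^{-1/p}\norm{g}_{L^p}$ while $N_\lambda$, and hence $J^p_2$, is unaffected by precomposition with $A^{-1}$, the estimate \eqref{eq:body-dyadic} for $AG$ is equivalent to the one for $G$ with the same constant; the hypotheses \eqref{eq:3}--\eqref{eq:4} transform the same way. Taking $A=M^{-1/2}$, where $M:=\abs{G}^{-1}\int_G y\otimes y\,\dif y$ is the (positive-definite) covariance matrix of the uniform measure on $G$, we may assume that this covariance is the identity. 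Then $\calA_t^G$ is convolution with a multiplier $m_G(t\xi)$, $m_G(\xi)=\abs{G}^{-1}\int_G e^{-2\pi i\xi\cdot y}\,\dif y$, and for $(S_j)$ I would take a standard smooth dyadic Littlewood--Paley partition in the Euclidean geometry of these coordinates — equivalently, the Littlewood--Paley decomposition associated with the heat semigroup — so that \eqref{eq:3} is trivial and \eqref{eq:4} is the classical dimension-free Littlewood--Paley--Stein inequality.

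The core of the argument is then to verify the remaining hypotheses of Theorem~\ref{thm:duo+rubio:max}. Positivity and the uniform bound $\norm{\calA_t^Gf}_{L^\infty}\le\norm{f}_{L^\infty}$ are immediate, so what is left are dimension-free $L^2$ estimates controlling the interaction of $\calA_{2^k}^G$ with $S_j$, of a form summable (or Dini-summable) in $\abs{j-k}$. Passing to Fourier multipliers, for $\xi\in\supp\widehat{S_j}$ with $2^{j+k}\le1$ one needs the low-frequency bound $\abs{m_G(2^k\xi)-1}\lesssim 2^{c(j+k)}$, while for $2^{j+k}\ge1$ one needs a dimension-free decay estimate for $m_G$, or more robustly a square-function bound such as $\sum_{k\in\Z}\abs{m_G(2^{k+1}\xi)-m_G(2^k\xi)}^2\lesssim 1$, uniform in $d$ and $G$. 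This is the main obstacle. Following Bourgain, I would reduce it to one dimension: the restriction of $m_G$ to a ray $\R_{+}\theta$ equals the one-dimensional Fourier transform of $g_\theta$, the probability density of $\theta\cdot Y$ with $Y$ distributed uniformly on $G$, which is even and log-concave (by Pr\'ekopa--Leindler) with variance exactly $1$ by the normalisation of $M$. The low-frequency bound is then elementary (write $m_G(\eta)-1=-2\abs{G}^{-1}\int_G\sin^2(\pi\eta\cdot y)\,\dif y$ and use $\eta^{\mathsf T}M\eta=\abs{\eta}^2$), and the decay/square-function estimates become dimension-independent statements about even log-concave densities of unit variance, proved with absolute constants; feeding these back gives the required interaction estimates for $(\calA_{2^k}^G)$.

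With those in place, Theorem~\ref{thm:duo+rubio:max} first yields \eqref{eq:body-dyadic} for $p$ near $2$ by a Duoandikoetxea--Rubio de Francia type splitting against the $S_j$, and then its built-in bootstrap step — exploiting positivity and the $L^\infty$ bound — upgrades this to the full range $1<p<\infty$, the constants remaining independent of $d$ and $G$. In particular the dimension-free maximal bound for $\sup_k\abs{\calA_{2^k}^Gf}$ is recovered a posteriori from \eqref{eq:V-vs-jump} with $r=\infty$ rather than used as input, which is the promised simplification. I expect essentially all of the real work to be in the one-dimensional log-concave Fourier estimates and in matching them precisely to the hypotheses of Theorem~\ref{thm:duo+rubio:max}; the linear normalisation, the choice of $(S_j)$, and the final assembly should be routine given the machinery of Section~\ref{sec:gener-prel}.
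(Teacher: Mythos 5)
Your overall frame (normalise $G$ linearly so that Bourgain's multiplier bounds hold, pick a semigroup-based Littlewood--Paley family with dimension-free \eqref{eq:4}, and feed the positive operators $A_k=\calA_{2^k}^{G}$ into Theorem~\ref{thm:duo+rubio:max}) is the paper's approach, but there is a concrete gap: you never introduce the approximating family $(P_k)_{k\in\Z}$, and without it Theorem~\ref{thm:duo+rubio:max} cannot produce a jump conclusion. The hypotheses you must verify are the maximal bound \eqref{eq:5} for $P_*$, the square-function condition \eqref{eq:6} for the \emph{differences} $(A_k-P_k)S_{k+j}$ (not for $A_kS_{k+j}$ or for consecutive differences $A_{k+1}-A_k$), and, crucially, the a priori jump inequality \eqref{eq:P-jump} for $(P_k)$ itself, which is what gets transferred to $(A_k)$ in \eqref{eq:duo+rubio:jump}. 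A bound like $\sum_k\abs{m_G(2^{k+1}\xi)-m_G(2^k\xi)}^2\lesssim 1$ controls a square function of differences, hence at best $V^2$ of increments, but it cannot by itself yield $J^p_2((\calA_{2^k}f)_k)$: the jump quasi-seminorm of the limiting behaviour has to come from somewhere, and in the paper it comes from the dimension-free jump estimate for the Poisson semigroup $P_k=\calP_{2^k}$ proved in \cite[Theorem 1.5]{arxiv:1808.04592}, with $S_k=\calP_{2^k}-\calP_{2^{k+1}}$ serving simultaneously as the Littlewood--Paley family (dimension-free \eqref{eq:4} and \eqref{eq:5} from \cite{MR0252961}) and \eqref{eq:6} checked via Plancherel from the symbol bounds \eqref{eq:muhat-decay}--\eqref{eq:muhat-smooth} of \cite{MR868898}, giving $a_j\lesssim 2^{-\delta\abs{j}/2}$. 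Your proposal omits this input entirely, so the "final assembly" is not routine as written.

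Two smaller points. First, the range of $p$ is not obtained by "bootstrapping from $p$ near $2$": the bootstrap inside Theorem~\ref{thm:duo+rubio:max} only self-improves the constant involving $M_*$; the full range $1<p<\infty$ comes from running the theorem with $q_0=1$ and every $q_1\in(1,2]$, then using the convolution structure to pass to $q_1\le p\le q_1'$. Second, re-deriving Bourgain's estimates by reduction to one-dimensional even log-concave densities is legitimate but unnecessary; the paper simply quotes \cite{MR868898} after the linear normalisation (isotropic rescaling), and the genuinely new dimension-free ingredient is the semigroup jump inequality, not the multiplier bounds. Also note that a generic smooth dyadic partition need not have dimension-free $L^p$ square-function constants; you should commit to the semigroup differences for $(S_j)$, as the paper does.
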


As a consequence of Theorem~\ref{thm:body-dyadic} and the decomposition into long and short jumps, see \eqref{eq:8}, Theorems~\ref{thm:body-short} and~\ref{thm:lq-ball-short} below extend \cite[Theorem 1.1]{MR3777413} and \cite[Theorem 1.3]{MR3777413}, respectively.
Hence Theorem~\ref{thm:body-dyadic}
can be thought of as the main result of this paper, since inequalities
\eqref{eq:body-short} and \eqref{eq:body-short-lq} were obtained in
\cite{MR3777413}. However, we shall present a different approach
to establish the estimates in \eqref{eq:body-short} and \eqref{eq:body-short-lq}.

\begin{theorem}
\label{thm:body-short}
Let $G$ be as in Theorem~\ref{thm:body-dyadic}.
Then for every $3/2 < p < 4$ and $f\in L^{p}(\R^{d})$ we have
\begin{equation}
\label{eq:body-short}
\norm[\Big]{\Big( \sum_{k\in\Z} \big(V^{2}(\calA_{t}^{G}f : t \in [2^k,2^{k+1}])\big)^{2} \Big)^{1/2}}_{L^{p}}
\lesssim
\norm{f}_{L^{p}}.
\end{equation}
In particular,
\begin{equation}
\label{eq:body-short:cor}
J^{p}_{2}((\calA_{t}^{G} f)_{t>0} : \R^{d} \to \C)
\lesssim
\norm{f}_{L^{p}},
\end{equation}
where the implicit constants in \eqref{eq:body-short} and \eqref{eq:body-short:cor} are independent of $d$ and $G$.
\end{theorem}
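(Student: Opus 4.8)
The plan is to derive \eqref{eq:body-short:cor} from \eqref{eq:body-short} together with Theorem~\ref{thm:body-dyadic} by means of the long--short decomposition of the jump quasi-seminorm, see \eqref{eq:8}: for $f$ in a dense subclass of $L^{p}(\R^{d})$ one has
\[
J^{p}_{2}\big((\calA_{t}^{G}f)_{t>0} : \R^{d}\to\C\big)
\lesssim
J^{p}_{2}\big((\calA_{2^{k}}^{G}f)_{k\in\Z} : \R^{d}\to\C\big)
+
\norm[\Big]{\Big(\sum_{k\in\Z}\big(V^{2}(\calA_{t}^{G}f : t\in[2^{k},2^{k+1}])\big)^{2}\Big)^{1/2}}_{L^{p}} ,
\]
where the first term is controlled for every $1<p<\infty$ by Theorem~\ref{thm:body-dyadic} and the second is exactly the left-hand side of \eqref{eq:body-short}, hence under control for $3/2<p<4$, with all constants independent of $d$ and $G$. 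So it suffices to prove \eqref{eq:body-short}.

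For \eqref{eq:body-short} I would invoke the abstract short-variation result of Section~\ref{sec:gener-prel}. Each $\calA_{t}^{G}$ is a positive operator, so Theorem~\ref{thm:short-var} applies once $(\calA_{t}^{G})_{t>0}$ is paired with a resolution of the identity $(S_{j})_{j\in\Z}$ on $L^{2}(\R^{d})$ satisfying \eqref{eq:3} and \eqref{eq:4}. I would take $(S_{j})_{j\in\Z}$ to be smooth radial Littlewood--Paley projections obtained from the functional calculus of the heat semigroup $(e^{s\Delta})_{s>0}$; for these the square-function bound \eqref{eq:4} holds with a constant depending only on $p$ by Stein's Littlewood--Paley theory for symmetric diffusion semigroups, hence dimension-free. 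With $(S_{j})$ fixed, the hypotheses of Theorem~\ref{thm:short-var} unwind --- through Plancherel on $L^{2}$ and a single-scale $L^{p_{0}}$ estimate for $\calA_{t}^{G}$ --- to dimension-free bounds for the Fourier multiplier $m_{G}(\xi):=\abs{G}^{-1}\int_{G}e^{-2\pi i\xi\cdot y}\,\dif y$: after placing $G$ in a convenient linear position one needs $\abs{m_{G}(\xi)-1}\lesssim\abs{\xi}^{a}$ for $\abs{\xi}$ small and $\abs{m_{G}(\xi)}+\abs{\xi\cdot\nabla m_{G}(\xi)}\lesssim\abs{\xi}^{-\delta}$ for $\abs{\xi}$ large, with $a,\delta>0$ and constants uniform in $d$ and $G$. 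These produce the off-diagonal estimate $\norm{S_{j}(\calA_{2^{k+1}}^{G}-\calA_{2^{k}}^{G})S_{j}}_{L^{2}\to L^{2}}\lesssim\min\{2^{(j+k)a},2^{-(j+k)\delta}\}$ which Theorem~\ref{thm:short-var} consumes, and feeding the package into that theorem yields \eqref{eq:body-short} in the asserted range $3/2<p<4$, the restriction being inherited from the range of the single-scale inputs.

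As a sanity check on the $L^{2}$ heart of the matter, one can also argue directly: for $f$ smooth and compactly supported $t\mapsto\calA_{t}^{G}f(x)$ is $C^{1}$, and Cauchy--Schwarz on each dyadic block gives $\big(V^{2}(\calA_{t}^{G}f : t\in[2^{k},2^{k+1}])\big)^{2}\le 2^{k}\int_{2^{k}}^{2^{k+1}}\abs{\partial_{t}\calA_{t}^{G}f}^{2}\,\dif t$, so summing in $k$ reduces \eqref{eq:body-short} to the $g$-function estimate $\norm[\big]{\big(\int_{0}^{\infty}\abs{t\partial_{t}\calA_{t}^{G}f}^{2}\,\frac{\dif t}{t}\big)^{1/2}}_{L^{p}}\lesssim\norm{f}_{L^{p}}$; its case $p=2$ is immediate from Plancherel and $\sup_{\omega\in S^{d-1}}\int_{0}^{\infty}\abs{r\partial_{r}m_{G}(r\omega)}^{2}\,\frac{\dif r}{r}\lesssim1$, which again reduces to the bounds on $m_{G}$ above (near $r=0$ via the vanishing of the linear Taylor coefficient of $m_{G}$, by the symmetry of $G$).

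The principal obstacle is the dimension-free Fourier analysis of $m_{G}$. A general high-dimensional symmetric convex body can be highly anisotropic, so one cannot simply normalise by the (possibly large) isotropic constant; extracting cancellation in the oscillatory integral defining $m_{G}$ uniformly in $d$ and $G$ forces a careful choice of position for $G$ and the use of convex-geometric input (John's ellipsoid, slicing and marginal estimates for log-concave densities) --- in effect the classical dimension-free bounds of Bourgain and Carbery, which are re-derived here rather than quoted. A secondary difficulty is calibrating the hypotheses of Theorem~\ref{thm:short-var} so that its conclusion covers the whole range $3/2<p<4$ rather than merely a neighbourhood of $p=2$; this is where the dimension-free $L^{p}$ bounds for the lacunary maximal function and for $\calA_{\star}^{G}$ enter as auxiliary ingredients.
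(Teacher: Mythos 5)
Your skeleton is the same as the paper's: \eqref{eq:body-short:cor} via the long--short decomposition \eqref{eq:8} together with Theorem~\ref{thm:body-dyadic}, and \eqref{eq:body-short} via the abstract short-variation result (Theorem~\ref{thm:short-var}) with semigroup Littlewood--Paley projections and Bourgain's dimension-free bounds for $\widehat{\mu}=m_G$. But there are genuine gaps at exactly the points where the work lies. First, you never supply the crucial hypothesis \eqref{eq:short-long-square-hypothesis} of Theorem~\ref{thm:short-var}(\ref{thm:short-var:case-vv}): this is a dimension-free \emph{vector-valued} $\ell^2$ inequality for the whole family $(\calA^{G}_{2^k(1+t)})_{k\in\Z}$, uniformly in $t\in[0,1]$, and it does not ``unwind to Plancherel plus a single-scale $L^{p_0}$ estimate.'' The paper gets it from the scaling identity $\calA^{G}_{2^k(1+t)}=\calA^{(1+t)G}_{2^k}$ and the bootstrap Theorem~\ref{thm:duo+rubio:max}, whose conclusion \eqref{eq:long-vector-valued} with $r=2$ holds for all $1<p<\infty$ with constants independent of the body (hence uniformly in $(1+t)G$); your closing remark that the maximal bounds ``enter as auxiliary ingredients'' gestures at this but leaves the actual verification missing. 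Second, your explanation of the range is wrong: the inputs (vector-valued bounds and \eqref{eq:sq-L2}) are available for all $1<p<\infty$, and the restriction $3/2<p<4$ is \emph{not} inherited from them; it is produced inside Theorem~\ref{thm:short-var}(\ref{thm:short-var:case-vv}) by interpolating the $L^2$ gain $2^{-l/2}$ against the trivial $2^{l/s}$ (resp.\ $2^{l}$) losses from summing the $2^l$ increments in each dyadic block, with $q_0\to1$, $q_\infty\to\infty$ giving exactly $\frac{3}{1+1/q_0}<p<\frac{4}{1+2/q_\infty}\to(3/2,4)$.

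Two further points on the Fourier input. The estimates you need are \eqref{eq:muhat-decay}--\eqref{eq:muhat-smooth}, and they should simply be quoted from \cite{MR868898} after a linear change of variables (as the paper does); proposing to re-derive them from John's ellipsoid and log-concavity, without doing so, leaves a large hole and is unnecessary --- the paper's claim about not using black boxes refers to the maximal estimates, not to these multiplier bounds. Moreover, the decay $\abs{\xi\cdot\nabla m_G(\xi)}\lesssim\abs{\xi}^{-\delta}$ you posit is stronger than what is available uniformly in $G$ (only $\abs{\langle\xi,\nabla\widehat{\mu}(\xi)\rangle}\leq C$ is), and it is also what your $L^2$ ``sanity check'' secretly needs to make $\int_1^{\infty}\abs{r\partial_r m_G(r\omega)}^2\,\frac{\dif r}{r}$ converge, so that reduction does not close as stated. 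The paper avoids this by using only the uniform derivative bound to get $\norm{\calA_{t+h}-\calA_t}_{L^2\to L^2}\lesssim h/t$ as in \eqref{eq:55}--\eqref{eq:short-Lip}, and combining it with the size bounds $\min\Set{\abs{2^k\xi},\abs{2^k\xi}^{-1}}$ to verify \eqref{eq:sq-L2} with $a_{j,l}=\min\Set{1,2^l2^{-\delta\abs{j}/2}}$, which is all that Theorem~\ref{thm:short-var} requires.
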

\begin{theorem}
\label{thm:lq-ball-short}
Let $d \in \N$ and $G \subset \R^{d}$ be the unit ball induced by the $\ell^{q}$ norm in $\R^d$ for some $1\le q\le \infty$. Then for every $1 < p < \infty$ and $f\in L^{p}(\R^{d})$ we have
\begin{equation}
\label{eq:body-short-lq}
\norm[\Big]{\Big( \sum_{k\in\Z} \big(V^{2}(\calA_{t}^{G}f : t \in [2^k,2^{k+1}])\big)^{2} \Big)^{1/2}}_{L^{p}}
\lesssim_{q}
\norm{f}_{L^{p}}.
\end{equation}
In particular
\begin{equation}
\label{eq:body-short:cor-lq}
J^{p}_{2}((\calA_{t}^{G} f)_{t>0} : \R^{d} \to \C)
\lesssim_{q}
\norm{f}_{L^{p}},
\end{equation}
where the implicit constants in \eqref{eq:body-short-lq} and \eqref{eq:body-short:cor-lq} are independent of $d$.
\end{theorem}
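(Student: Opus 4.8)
\emph{Proof proposal.} The plan is to deduce Theorem~\ref{thm:lq-ball-short} from Theorem~\ref{thm:body-dyadic} together with the long--short jump decomposition~\eqref{eq:8}. Since, by~\eqref{eq:8}, the quantity $J^{p}_{2}((\calA_{t}^{G}f)_{t>0})$ is dominated by $J^{p}_{2}((\calA_{2^{k}}^{G}f)_{k\in\Z})$ together with the square function on the left-hand side of~\eqref{eq:body-short-lq}, and Theorem~\ref{thm:body-dyadic} already controls the first term dimension-freely, everything reduces to proving~\eqref{eq:body-short-lq}, after which~\eqref{eq:body-short:cor-lq} follows at once. The special structure of the $\ell^{q}$ balls enters only at this short-variation step, and it is what upgrades the range $3/2<p<4$ of Theorem~\ref{thm:body-short} to the full range $1<p<\infty$.

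To prove~\eqref{eq:body-short-lq} I would invoke the abstract bootstrap result Theorem~\ref{thm:short-var} of Section~\ref{sec:gener-prel}, applied to the positive operators $T_{t}=\calA_{t}^{B^{q}}$ together with a Littlewood--Paley family $(S_{j})_{j\in\Z}$ on $\R^{d}$ satisfying~\eqref{eq:3}--\eqref{eq:4} with constants independent of $d$; one may take the dyadic pieces $S_{j}=\int_{2^{-j}}^{2^{-j+1}}(-t\partial_{t}P_{t})\tfrac{\dif t}{t}$ of the Poisson semigroup $P_{t}=e^{-t\sqrt{-\Delta}}$, for which~\eqref{eq:3} holds by telescoping and~\eqref{eq:4} follows from Stein's Littlewood--Paley inequality for symmetric diffusion semigroups. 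It is essential here not to dominate the short-variation square function by the naive $g$-function $\big(\int_{0}^{\infty}\abs{t\partial_{t}\calA_{t}^{B^{q}}f}^{2}\tfrac{\dif t}{t}\big)^{1/2}$, which need not be bounded uniformly in $d$ (indeed it is unbounded already on $L^{2}$ when $q=\infty$) because $\calA_{t}^{B^{q}}f$ keeps oscillating in $t$ without damping; the oscillation within each dyadic block has to be exploited rather than thrown away, and this is exactly what Theorem~\ref{thm:short-var} does through a dyadic refinement of the blocks. As a byproduct this also reproves the dimension-free $L^{p}$ bound for $\calA_{\star}^{B^{q}}$, so no maximal estimate is fed in.

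It then remains to check the hypotheses of Theorem~\ref{thm:short-var} with constants uniform in $d$, and these reduce to a handful of Fourier-multiplier estimates for $m:=\abs{B^{q}}^{-1}\widehat{\ind{B^{q}}}$ --- off-diagonal $L^{2}$ bounds of the shape $\norm{S_{j}\calA_{t}^{B^{q}}f}_{L^{2}}\lesssim_{q}\min\big((2^{j}t)^{\alpha},(\sigma_{q}2^{j}t)^{-\delta}\big)\norm{f}_{L^{2}}$, and similar estimates for the increments $\calA_{(1+s)t}^{B^{q}}f-\calA_{t}^{B^{q}}f$ with $\abs{s}\le1$, all uniform in the direction $\theta=\xi/\abs{\xi}$. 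The small-frequency regime rests only on the fact that $B^{q}$ is symmetric with isotropic inertia tensor: $\partial_{u}^{2}m(u\theta)=-(2\pi)^{2}\abs{B^{q}}^{-1}\int_{B^{q}}\innerp{\theta}{x}^{2}e^{-2\pi iu\innerp{\theta}{x}}\dif x$, hence $\abs{\partial_{u}^{2}m(u\theta)}\le(2\pi)^{2}\sigma_{q}^{2}$ with $\sigma_{q}^{2}:=\abs{B^{q}}^{-1}\int_{B^{q}}x_{1}^{2}\dif x\le1$ uniformly in $d$ and $q$ (in fact $\sigma_{q}^{2}\asymp d^{-2/q}$), so that $\abs{m(u\theta)-1}\lesssim\sigma_{q}^{2}u^{2}$ and $\abs{u\partial_{u}m(u\theta)}\lesssim\sigma_{q}^{2}u^{2}$. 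The large-frequency regime is where the $\ell^{q}$ structure is essential and one needs genuine decay of $\widehat{\ind{B^{q}}}$ that is uniform over all directions: for $q=\infty$ one uses the product formula $m(\xi)=\prod_{i}\tfrac{\sin(2\pi\xi_{i})}{2\pi\xi_{i}}$ and the dichotomy $\abs{\tfrac{\sin(2\pi s)}{2\pi s}}\le\min\big(e^{-cs^{2}},(2\pi\abs{s})^{-1}\big)$, while for $1\le q<\infty$ one uses Müller's slicing, based on the factorization $\int_{\R^{d}}e^{-s\sum_{i}\abs{x_{i}}^{q}}\dif x=\big(\int_{\R}e^{-s\abs{x_{1}}^{q}}\dif x_{1}\big)^{d}$ and the ensuing one-dimensional model $m(\xi_{1}\theta)\approx\Phi_{q}(\sigma_{q}\xi_{1})$ with $\Phi_{q}$ a fixed, dimension-independent Fourier transform.

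The hard part is precisely this uniform-in-direction decay. For a general symmetric convex body $K$ one controls $\widehat{\ind{K}}$ only in an averaged sense over directions, and it is the failure to pass from the averaged to a uniform estimate that confines the general-body Theorem~\ref{thm:body-short} to $3/2<p<4$; for $\ell^{q}$ balls the tensor structure restores the uniform bound, and the work is to extract from it exactly the decay and the regularity-in-$t$ estimates demanded by Theorem~\ref{thm:short-var}, keeping every constant (the normalization $\sigma_{q}$, the exponents $\alpha$, $\delta=\delta(q)$, and all implied constants) free of $d$. A secondary, more routine point is to verify that the chosen Littlewood--Paley family interacts with $\calA_{t}^{B^{q}}$ in the quantitative off-diagonal manner Theorem~\ref{thm:short-var} requires, once again with dimension-free constants.
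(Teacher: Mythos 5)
There is a genuine gap at the short-variation step. All the estimates you propose to verify — the off-diagonal bounds for $S_j\calA_t^{B^q}$, the increment bounds for $\calA_{(1+s)t}^{B^q}-\calA_t^{B^q}$, the small-frequency bounds via $\sigma_q$, and the large-frequency decay via the product formula ($q=\infty$) or M\"uller's slicing ($q<\infty$) — are pointwise multiplier bounds, i.e.\ $L^2$ information obtained through Plancherel. Inside the framework of Theorem~\ref{thm:short-var} such input only feeds hypothesis \eqref{eq:sq-L2} (and, via Plancherel, the H\"older condition \eqref{eq:short-Holder} with $q_0=2$, which is excluded in case~(\ref{thm:short-var:case-holder}) and in any event would force $q_1$ near $2$ by \eqref{eq:26}). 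Combined with the vector-valued/maximal estimates from Theorem~\ref{thm:duo+rubio:max}, this puts you in case~(\ref{thm:short-var:case-vv}), whose conclusion for the $V^2$ square function is intrinsically confined to $\frac{3}{1+1/q_0}<p<\frac{4}{1+2/q_\infty}\subset(3/2,4)$ — exactly the general-body range of Theorem~\ref{thm:body-short}. To reach all $1<p<\infty$ one must supply \eqref{eq:short-Holder} on $L^{q_0}$ for $q_0$ arbitrarily close to $1$ with $\alpha$ close to $1/q_0$ (then \eqref{eq:26} lets $q_1\downarrow 1$, and duality/interpolation for convolution operators covers $p$ up to $q_1'$). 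This is a genuinely $L^p$, dimension-free statement about the operator norm of the increments, and it is the one place where the $\ell^q$ structure is really used: the paper gets it from the dimension-free $M^p$ bounds of Carbery ($1\le q<\infty$) and Bourgain ($q=\infty$) for the fractional derivative $(\xi\cdot\nabla)^{\alpha}\widehat{\mu}$, recovering $\widehat\mu(t\cdot)-\widehat\mu$ by fractional integration to conclude $\norm{\widehat{\mu}(t\cdot)-\widehat{\mu}}_{M^{p}}\lesssim_\alpha (t-1)^{\alpha}$ (equivalently one may cite \cite[Lemma 4.2]{MR3777413}). Your proposal never produces this ingredient, so the argument as written cannot improve on $3/2<p<4$.

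Relatedly, your diagnosis of the general-body obstruction is not accurate: for arbitrary symmetric convex bodies the decay and smoothness bounds \eqref{eq:muhat-decay}--\eqref{eq:muhat-smooth} of Bourgain are already uniform in the direction $\xi/\abs{\xi}$ with dimension-free constants (after an affine normalization), and they are precisely what the paper uses to verify \eqref{eq:sq-L2} — no $\ell^q$ structure is needed there. What confines Theorem~\ref{thm:body-short} to $3/2<p<4$ is not averaged-versus-uniform directional decay but the absence, for general bodies, of dimension-free $L^p$ multiplier estimates near $p=1$ of the Carbery--Bourgain type; for $\ell^q$ balls those estimates are available, and that is what upgrades the range. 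Your reduction of \eqref{eq:body-short:cor-lq} to \eqref{eq:body-short-lq} via \eqref{eq:8} and Theorem~\ref{thm:body-dyadic}, and your choice of Poisson-based Littlewood--Paley operators, agree with the paper and are fine.
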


The method of the present paper also allows us to provide estimates independent of the dimension in jump inequalities associated with the discrete averaging operator along cubes in $\Z^d$. For every $x\in\Z^d$ and $N\in\N$ let
\begin{align}
\label{eq:37}
\bfA_Nf(x) := \frac{1}{\card{Q_N\cap \Z^d}}\sum_{y\in Q_N\cap\Z^d}f(x-y), \qquad f\in\ell^1(\Z^d),
\end{align}
be the discrete Hardy--Littlewood averaging operator, where $Q_N=[-N, N]^d$.
\begin{theorem}
\label{jump-discrete}
For every $3/2 < p < 4$ and $f\in \ell^p(\Z^d)$ we have
\begin{align}
\label{eq:38}
J^{p}_{2}((\bfA_{N} f)_{N\in\N} : \Z^{d} \to \C)
\lesssim
\norm{f}_{\ell^p}.
\end{align}
Moreover, if we consider only lacunary parameters, then \eqref{eq:38} remains true for all $1<p<\infty$ and we have
\begin{equation}
\label{eq:39}
J^{p}_{2}((\bfA_{2^k} f)_{k\ge0} : \Z^{d} \to \C)
\lesssim
\norm{f}_{\ell^p},
\end{equation}
where the implicit constants in \eqref{eq:38} and \eqref{eq:39} are independent of $d$.
\end{theorem}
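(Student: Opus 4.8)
The plan is to treat the two assertions separately: first prove the lacunary bound \eqref{eq:39} for all $1<p<\infty$, and then deduce \eqref{eq:38} on $3/2<p<4$ by combining \eqref{eq:39} with a short-variation estimate through the long--short decomposition \eqref{eq:8}. Both parts are to be carried out inside the framework of Section~\ref{sec:gener-prel} with $X=\Z^{d}$ and the counting measure. For the Littlewood--Paley operators I would take $S_{j}:=H_{4^{-j-1}}-H_{4^{-j}}$, $j\in\Z$, where $(H_{t})_{t>0}$ is the heat semigroup on $\Z^{d}$ generated by the discrete Laplacian. Then \eqref{eq:3} holds by telescoping, since $H_{t}\to\operatorname{Id}$ as $t\to0^{+}$ and $H_{t}\to0$ as $t\to\infty$ in the strong operator topology on $\ell^{2}(\Z^{d})$; and since $(H_{t})_{t>0}$ is a symmetric diffusion semigroup in the sense of Stein, his Littlewood--Paley theory for such semigroups dominates $(\sum_{j}\abs{S_{j}f}^{2})^{1/2}$ pointwise by the $g$-function of the semigroup, and hence bounds it on $L^{p}$ by $\norm{f}_{L^{p}}$ with a constant depending only on $p$, in particular independently of $d$ --- which is \eqref{eq:4}. (Heuristically $S_{j}$ selects those frequencies $\xi\in\T^{d}$ with $\sum_{i=1}^{d}\sin^{2}(\pi\xi_{i})\sim 4^{j}$.)

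To prove \eqref{eq:39} I would apply Theorem~\ref{thm:duo+rubio:max} to the family of positive operators $(\bfA_{2^{k}})_{k\ge0}$. The only hypotheses of that theorem that are specific to these operators are $L^{2}$-estimates for their interaction with the $S_{j}$; writing $\frakm_{k}(\xi)=\prod_{i=1}^{d}\tfrac{\sin(\pi(2^{k+1}+1)\xi_{i})}{(2^{k+1}+1)\sin(\pi\xi_{i})}$ for the Fourier multiplier of $\bfA_{2^{k}}$ and $\frakq_{t}(\xi)=\prod_{i=1}^{d}e^{-4t\sin^{2}(\pi\xi_{i})}$ for that of $H_{t}$, these reduce by Plancherel on $\T^{d}$ to showing that the products $\frakm_{k}(\frakq_{4^{-j-1}}-\frakq_{4^{-j}})$ and $(1-\frakm_{k})(\frakq_{4^{-j-1}}-\frakq_{4^{-j}})$ are bounded pointwise by $2^{-\delta\abs{j+k}}$ for some fixed $\delta>0$, uniformly in $d$. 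The dimension-free control here is the crux, and I would obtain it exactly as in the cube case $q=\infty$ of Theorem~\ref{thm:lq-ball-short}: comparing, coordinate by coordinate, the one-dimensional normalized Dirichlet factor of $\frakm_{k}$ with the one-dimensional heat factor, one extracts from each coordinate a gain that is a function of $2^{k}\abs{\xi_{i}}$, and the product of these gains over the $d$ coordinates is controlled through its logarithm.

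For \eqref{eq:38}, the decomposition \eqref{eq:8} together with \eqref{eq:39} reduces matters to the short-variation estimate
\[
\norm[\Big]{\Big(\sum_{k\ge0}\big(V^{2}(\bfA_{N}f:N\in[2^{k},2^{k+1}]\cap\N)\big)^{2}\Big)^{1/2}}_{\ell^{p}}\lesssim\norm{f}_{\ell^{p}},\qquad 3/2<p<4,
\]
which I would derive from Theorem~\ref{thm:short-var} applied with the same $(S_{j})$; its hypotheses are again Fourier-analytic and checked as above. The restriction to $3/2<p<4$ is exactly the one appearing in Theorem~\ref{thm:body-short}: it stems from the $L^{2}$-based step of the bootstrap rather than from the dimension. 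For the subfamily of large scales $2^{k}\gtrsim d$ one could instead transfer this estimate from the continuous cube average $\calA^{[-1,1]^{d}}_{N+1/2}$ furnished by Theorem~\ref{thm:lq-ball-short}, using that $\bfA_{N}f$ is the restriction to $\Z^{d}$ of $\calA^{[-1,1]^{d}}_{N+1/2}$ applied to the piecewise-constant extension of $f$, and that for $N\gtrsim d$ this sampling costs only an $O(1)$ factor on $\ell^{p}$.

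The step I expect to be the main obstacle is the dimension-free multiplier analysis at \emph{small} scales $2^{k}\lesssim d$. There the discrete cube $Q_{2^{k}}\cap\Z^{d}$ is genuinely unlike the Euclidean cube $2^{k}[-1,1]^{d}$ --- e.g.\ their cardinality and volume differ by the unbounded factor $(1+2^{-k-1})^{d}$ --- so reduction to $\R^{d}$ gives nothing useful, and one must work with the product $\frakm_{k}$ of Dirichlet-type kernels directly, showing that the coordinatewise comparison with the discrete heat factor still produces gains summable in $d$. Once these multiplier estimates are in place, Theorems~\ref{thm:duo+rubio:max} and~\ref{thm:short-var} together with \eqref{eq:8} assemble the two inequalities of the statement.
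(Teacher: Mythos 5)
Your outline reproduces the architecture of the paper's proof (long--short decomposition \eqref{eq:8}, Theorem~\ref{thm:duo+rubio:max} for the lacunary bound \eqref{eq:39}, Theorem~\ref{thm:short-var} for the short variations, and a dimension-free symmetric diffusion semigroup as the source of the Littlewood--Paley theory), but the two inputs that actually make that architecture produce Theorem~\ref{jump-discrete} are missing. First, Theorem~\ref{thm:duo+rubio:max} yields a jump inequality for $(\bfA_{2^k})_k$ only if one feeds in an approximating family $(P_k)$ satisfying the dimension-free maximal bound \eqref{eq:5} \emph{and} the jump inequality \eqref{eq:P-jump}; your claim that ``the only hypotheses specific to these operators are $L^2$-estimates'' overlooks this, and you never name the family or the source of its jump bound. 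The paper takes $P_k=\calP_{2^k}$, the discrete Poisson semigroup, and quotes \cite[Theorem 1.5]{arxiv:1808.04592} for \eqref{eq:P-jump}; if you prefer the discrete heat semigroup you need the analogous dimension-free jump estimate for it, which must be invoked explicitly. Relatedly, the two bounds you propose to verify, $\abs{\frakm_k}$ times the Littlewood--Paley factor and $\abs{1-\frakm_k}$ times the same factor both being $\lesssim 2^{-\delta\abs{j+k}}$, cannot hold simultaneously on the frequency band where that factor is of size one, since $\frakm_k+(1-\frakm_k)=1$; condition \eqref{eq:6} concerns the difference $\bfA_{2^k}-P_k$, and the two estimates are to be used in complementary regimes against the symbol of $P_k$, exactly as in \eqref{eq:off-diagonal-FT}.

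The decisive gap, which you yourself flag, is the dimension-free symbol estimates for $\frakm_N$, i.e.\ \eqref{eq:54}: $\abs{\frakm_N(\xi)}\lesssim(N\abs{\xi})^{-1}$, $\abs{\frakm_N(\xi)-1}\lesssim N\abs{\xi}$, and the Lipschitz-type bound in $N$, with constants independent of $d$. These are not obtainable by the coordinatewise comparison you sketch: multiplying the one-dimensional bounds $\min\Set{1,(N\abs{\xi_i})^{-1}}$ gives no gain from any coordinate with $N\abs{\xi_i}\le 1$, so in the regime $1\le N\abs{\xi}\le\sqrt{d}$ (equivalently your problematic range $N\lesssim d$ after rescaling) the naive product argument loses a factor of order $\sqrt{d}$ against the desired Euclidean-norm decay, and the continuous cube multiplier estimates behind Theorem~\ref{thm:lq-ball-short} do not transfer. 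These discrete estimates are the main technical achievement of \cite{arxiv:1804.07679}, and the paper's proof of Theorem~\ref{jump-discrete} is short precisely because it cites them rather than reproving them; as written, your proposal assumes the key estimate rather than proving it, and the sampling/transference remark for $N\gtrsim d$ does not cover the small scales. Once \eqref{eq:54} and \eqref{eq:P-jump} are in place, the remainder of your plan does coincide with the paper's argument, with the additional remark that the hypothesis \eqref{eq:short-long-square-hypothesis} of Theorem~\ref{thm:short-var} is not a purely Fourier-analytic condition but is supplied by the vector-valued estimate \eqref{eq:long-vector-valued} coming out of Theorem~\ref{thm:duo+rubio:max}.
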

Theorem~\ref{jump-discrete} provides the endpoint estimate at $r=2$ for the recent dimension-free estimates \cite{arxiv:1804.07679} for $r$-variations corresponding to operator \eqref{eq:37}.

The dimension-free results are proved in Section~\ref{sec:convex} by combining the results from Section~\ref{sec:gener-prel} (Theorem~\ref{thm:duo+rubio:max} and Theorem~\ref{thm:short-var}) with the jump estimates for the Poisson semigroup from \cite{arxiv:1808.04592} and Fourier multiplier estimates from \cite{MR868898} and \cite{MR1042048,MR3273441}.

\subsection{Applications to operators of Radon type}
Another important class of operators which was extensively studied in~\cite{MR2434308} in the context of jump inequalities are operators of Radon type modeled on polynomial mappings.

Let $P=(P_1,\dotsc, P_d):\R^k\to\R^d$ be a polynomial mapping, where each component $P_j:\R^k\to\R$ is a polynomial with $k$ variables and
real coefficients.
We fix $\Omega\subset\R^k$ a convex open bounded set containing the origin (not necessarily symmetric), and for every $x\in\R^d$ and $t>0$ we define the Radon averaging operators
\begin{align}
\label{eq:40}
\calM_{t}^P f(x):= \frac{1}{\meas{\Omega_{t}}} \int_{\Omega_{t}} f(x-P(y)) \dif y,
\end{align}
where $\Omega_t=\Set{x\in\R^k \given t^{-1}x\in\Omega}$. Using Theorem~\ref{thm:duo+rubio:max} and Theorem~\ref{thm:short-var} we easily deduce Theorem~\ref{thm:radon-av}, see Section~\ref{sec:radon}.
\begin{theorem}
\label{thm:radon-av}
For every $1<p<\infty$ and $f\in L^{p}(\R^{d})$ we have
\begin{equation}
\label{eq:41}
J^{p}_{2}( (\calM_{t}^{P} f)_{t>0} : \R^{d} \to \C)
\lesssim_{d, p}
\norm{f}_{L^{p}},
\end{equation}
where the implicit constant is independent of the coefficients of $P$.
\end{theorem}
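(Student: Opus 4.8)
The plan is to obtain \eqref{eq:41} from Theorem~\ref{thm:duo+rubio:max} and Theorem~\ref{thm:short-var} together with the splitting of the jump quasi-seminorm into a long-jump part and a short-variation part \eqref{eq:8}; thus the whole matter reduces to verifying the hypotheses of those two theorems in the present situation. I would work on $X=\R^{d}$ equipped with the standard Euclidean Littlewood--Paley projections $S_{j}$, so that $\widehat{S_{j}f}(\xi)=\psi(2^{-j}\xi)\widehat{f}(\xi)$ for a fixed bump $\psi$ adapted to $\abs{\xi}\sim1$; these satisfy \eqref{eq:3} and \eqref{eq:4}. The operators $T_{t}=\calM_{t}^{P}$ are positive, and after the routine reduction to the case $P(0)=0$ (the constant term of $P$ is merely a fixed translation of $x$) the hypotheses of Theorem~\ref{thm:duo+rubio:max} come down to: (i) positivity, already observed; (ii) an a priori estimate $\norm{\sup_{t>0}\abs{\calM_{t}^{P}f}}_{L^{p_{0}}}\lesssim\norm{f}_{L^{p_{0}}}$ for some $p_{0}\in(1,\infty)$, which is the classical maximal theorem for polynomial Radon averages (valid for every $p_{0}>1$, uniformly in the coefficients of $P$; see \cite{MR2434308} and its references); and (iii) single-scale $L^{2}$ estimates, which is the one genuinely analytic ingredient.

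These single-scale estimates take the form
\[
\norm{\calM_{2^{k}}^{P}S_{j}f}_{L^{2}}\lesssim 2^{-\delta j}\norm{f}_{L^{2}}\quad(j\ge\kappa_{P}(k)),
\qquad
\norm{(\calM_{2^{k}}^{P}-\id)S_{j}f}_{L^{2}}\lesssim 2^{-\delta(\kappa_{P}(k)-j)}\norm{f}_{L^{2}}\quad(j\le\kappa_{P}(k)),
\]
where $\kappa_{P}(k)$ is the critical frequency scale attached by the Newton diagram of $P$ to the dilation parameter $2^{k}$, and where $\delta>0$ and the implicit constants do not depend on the coefficients of $P$. Both follow from the multiplier identity $\widehat{\calM_{t}^{P}f}(\xi)=m_{P,t}(\xi)\widehat{f}(\xi)$ with $m_{P,t}(\xi)=\meas{\Omega}^{-1}\int_{\Omega}e^{-2\pi i\innerp{\xi}{P(ty)}}\dif y$: below the critical scale one Taylor-expands the phase (using that $\Omega$ is bounded with $0\in\Omega$) to estimate $\abs{m_{P,t}(\xi)-1}$, and above it one applies van der Corput / sublevel-set estimates to obtain decay, with constants depending only on $d$, $k$, and $\deg P$. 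I expect this to be the main obstacle: a one-parameter dilation does not act on an \emph{inhomogeneous} polynomial map the way it acts on a homogeneous one, so the oscillatory estimates must be organized according to the monomial structure of $P$ — an induction on the Newton diagram, or on the number of monomials, together with a rescaling that normalizes the coefficients so the decay rate is coefficient-free. This is exactly the analysis carried out in \cite{MR2434308}, which I would invoke rather than reprove. (The low-frequency truncations that the first estimate does not reach form an approximate-identity/martingale-type family whose variation is controlled directly by \eqref{eq:4}.)

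Granting (i)--(iii), Theorem~\ref{thm:duo+rubio:max} yields $J^{p}_{2}\bigl((\calM_{2^{k}}^{P}f)_{k\in\Z}\bigr)\lesssim_{d,p}\norm{f}_{L^{p}}$ for every $1<p<\infty$. For the short-variation piece I would run the same oscillatory analysis on the increments $\calM_{2^{k+1}}^{P}-\calM_{2^{k}}^{P}$ and the derivative $t\partial_{t}\calM_{t}^{P}$ (whose multiplier $(t\partial_{t})m_{P,t}$ is again an oscillatory integral with polynomial phase) to produce the single-scale bounds required by Theorem~\ref{thm:short-var}; combined with $V^{2}(\calM_{t}^{P}f:t\in[2^{k},2^{k+1}])\le\int_{2^{k}}^{2^{k+1}}\abs{(t\partial_{t}\calM_{t}^{P})f}\,\tfrac{\dif t}{t}$ and Cauchy--Schwarz this gives
\[
\norm[\Big]{\Big(\sum_{k\in\Z}\bigl(V^{2}(\calM_{t}^{P}f:t\in[2^{k},2^{k+1}])\bigr)^{2}\Big)^{1/2}}_{L^{p}}\lesssim_{d,p}\norm{f}_{L^{p}}
\]
for $1<p<\infty$. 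Adding the two contributions through \eqref{eq:8} yields \eqref{eq:41}, with every constant depending only on $p$, $d$, $k$, $\deg P$, and $\Omega$, and not on the coefficients of $P$.
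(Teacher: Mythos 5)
There is a genuine gap, and it is exactly at the point you flag as ``the main obstacle'': the uniformity in the coefficients of $P$. You propose to work in the unlifted setting, with the isotropic Euclidean Littlewood--Paley operators $S_j$, and to encode the inhomogeneity of $P$ in a single coefficient-dependent critical scale $\kappa_P(k)$, claiming decay of $\calM_{2^k}^P S_j$ for $j\ge\kappa_P(k)$ and of $(\calM_{2^k}^P-\operatorname{Id})S_j$ for $j\le\kappa_P(k)$. For a polynomial mapping with several monomials of disparate coefficient sizes no such scalar critical scale exists: already for $k=1$, $d=2$, $P(y)=(\epsilon y,\,y^2)$ with $\epsilon$ tiny and $t=1$, a frequency $\xi$ of fixed Euclidean size $2^{50}$ gives $m_{P,1}(\xi)\approx 1$ when $\xi$ points along the first axis but only oscillatory decay (and no closeness to the identity) when it points along the second axis; which regime one is in depends on the direction of $\xi$, not on $\abs{\xi}$ alone, and the threshold moves with $\epsilon$. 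Moreover, the abstract theorems you want to invoke require the off-diagonal input \eqref{eq:6} and \eqref{eq:sq-L2} with the rigid pairing $S_{k+j}$, i.e.\ a critical scale that shifts \emph{linearly} in $k$, which again fails for inhomogeneous $P$ since different monomials scale differently in $t$; and the approximating family $(P_k)$ for which the jump hypothesis \eqref{eq:P-jump} and the maximal bound \eqref{eq:5} are supposed to be known is left unspecified (your parenthetical ``low-frequency truncations'' would have to live at coefficient-dependent, anisotropic scales). Appealing to \cite{MR2434308} does not close this: its jump results cover Euclidean balls (and monomial curves for singular integrals), not general polynomial mappings, so the coefficient-free single-scale estimates in your form are not available there, and as stated they are false.

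The paper removes the coefficients before any oscillatory estimate is made, by the lifting procedure of \cite[Chapter 11, Section 2.4]{MR1232192}: one may assume $P(x)=(x)^{\Gamma}$ is the canonical polynomial mapping into $\R^{\Gamma}$, on which the parameter $t$ acts by the genuine anisotropic dilations $t^{A}$, $(Ax)_\gamma=\abs{\gamma}x_\gamma$. Then $\widehat{\mu}_t$ depends on $\xi$ only through $t^{A}\xi$, the Littlewood--Paley operators and the approximating family are built from the quasi-norm $\frakq_*(\xi)=\max_\gamma\abs{\xi_\gamma}^{1/\abs{\gamma}}$ (with the jump inequality \eqref{eq:36} for $\Phi_{2^k}*f$ supplied by \cite[Theorem 1.1]{MR2434308}), and the conditions \eqref{eq:42}--\eqref{eq:43} hold with the coefficient-free modulus $\omega(t)=t^{1/d}$ by Proposition~\ref{prop:vdC-multidim} together with Lemma~\ref{lem:boundary-of-convex}; the Lipschitz bound \eqref{eq:short-Holder} then feeds Theorem~\ref{thm:short-var}, and \eqref{eq:8} combines the two pieces. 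Your short-variation step has a secondary soft spot as well: the crude bound $V^2\le\int_{2^k}^{2^{k+1}}\abs{t\partial_t\calM_t^P f}\,\dif t/t$ plus Cauchy--Schwarz reduces matters to an $L^p$ bound for a derivative square function, which for Radon averages, uniformly in the coefficients and for all $1<p<\infty$, is essentially as hard as the theorem itself; Theorem~\ref{thm:short-var} is designed precisely to avoid this, needing only the $L^2$ estimate \eqref{eq:sq-L2} (again naturally verified after lifting), the Hölder condition \eqref{eq:short-Holder}, and a maximal bound. If you insert the lifting step at the start and verify \eqref{eq:42}, \eqref{eq:43}, \eqref{eq:sq-L2}, \eqref{eq:short-Holder} in the lifted, quasi-homogeneous setting, your outline becomes the paper's proof.
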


Before we formulate a corresponding result for truncated singular integrals we need to fix some definitions and notation.
A \emph{modulus of continuity} is a function $\omega :[0,\infty) \to [0,\infty)$ with $\omega(0)=0$ that is subadditive in the sense that
\[
u\leq t+s
\implies
\omega(u)\leq \omega(t)+\omega(s).
\]
Substituting $s=0$ one sees that $\omega(u)\leq\omega(t)$ for all $0\leq u\leq t$.
The basic example is $\omega(t)=t^{\theta}$, with $\theta\in(0, 1)$.
Note that the composition and sum of two moduli of continuity is again a modulus of continuity.
In particular, if $\omega(t)$ is a modulus of continuity and $\theta\in(0, 1)$, then $\omega(t)^{\theta}$ and $\omega(t^{\theta})$ are also moduli of continuity.

The \emph{Dini norm} and the \emph{log-Dini norm} of a modulus of
continuity are defined respectively by setting
\begin{equation}
\label{eq:Dini}
\norm{\omega}_{\Dini} := \int_{0}^{1} \omega(t) \frac{\dif t}{t},
\quad \text{ and } \quad
\norm{\omega}_{\logDini} := \int_{0}^{1} \omega(t) \frac{\abs{\log t}\dif t}{t}.
\end{equation}
For any $c>0$ the integral can be equivalently (up to a $c$-dependent multiplicative constant) replaced by the sum over $2^{-j/c}$ with $j\in\N$.

Finally, for every $x\in\R^d$ and $t>0$ we will consider the truncated singular Radon transform
\begin{align}
\label{eq:47}
\calH_{t}^P f(x) := \int_{\R^{k} \setminus \Omega_{t}} f(x-P(y)) K(y) \dif y,
\end{align}
defined for every Schwartz function $f$ in $\R^{d}$, where $K : \R^{k}\setminus\Set{0}\to\C$ is a kernel
satisfying the following conditions:
\begin{enumerate}
\item the size condition, i.e.\ there exists a constant $C_K>0$ such
that
\begin{equation}
\label{eq:size}
\abs{K(x)}\le C_K\abs{x}^{-k},\quad \text{for all}\quad x\in\R^k;
\end{equation}
\item the cancellation condition
\begin{align}
\label{eq:cancel1}
\int_{\Omega_{R}\setminus \Omega_{r}} K(y) \dif y = 0,
\quad \text{ for }\quad
0<r<R<\infty;
\end{align}
\item the smoothness condition
\begin{equation}
\label{eq:smoothness}
\sup_{R>0} \sup_{\abs{y}\leq Rt/2} \int_{R\le\abs{x}\le2R}
\abs{K(x)-K(x+y)}\dif x
\leq
\omega_K(t),
\end{equation}
for every $t\in(0, 1)$
with some modulus of continuity $\omega_K$.
\end{enumerate}
In many applications
it is easy to verify the somewhat stronger pointwise version of the smoothness
estimate from \eqref{eq:smoothness}. Namely,
\begin{equation}
\label{eq:K-mod}
\abs{K(x)-K(x+y)}
\leq
\omega_K(\abs{y}/\abs{x}) \abs{x}^{-k},
\quad\text{provided that}\quad \abs{y}\leq \abs{x}/2,
\end{equation}
for some modulus of continuity $\omega_K$. One can immediately see that
condition \eqref{eq:K-mod} implies condition \eqref{eq:smoothness}.
Our next result establishes an analogue of the inequality \eqref{eq:41} for the operators in \eqref{eq:47}.
\begin{theorem}
\label{thm:radon-sing}
Suppose that $\norm{\omega_K^{\theta}}_{\logDini}+\norm{\omega_K^{\theta/2}}_{\Dini}<\infty$ for some $\theta\in(0, 1]$. Then
for every $p\in\Set{1+\theta, (1+\theta)'}$ and $f\in L^{p}(\R^{d})$ we have
\begin{equation}
\label{eq:48}
J^{p}_{2}( (\calH_{t}^{P} f)_{t>0} : \R^{d} \to \C)
\lesssim_{d, p}
\norm{f}_{L^{p}},
\end{equation}
where the implicit constant is independent of the coefficients of $P$.
More precisely,
\begin{enumerate}
\item if $\norm{\omega_K^{\theta}}_{\logDini}<\infty$, then
\begin{align}
\label{eq:53}
J^{p}_{2}((\calH_{2^k}f)_{k\in\Z} : \R^d \to \C)
\lesssim
\norm{f}_{L^{p}};
\end{align}
\item if $\norm{\omega_K^{\theta/2}}_{\Dini}<\infty$, then
\begin{equation}
\label{eq:56}
\norm[\Big]{ \bigl( \sum_{k\in\Z} V^{2}(\calH_{t}f : t \in [2^k,2^{k+1}])^{2} \bigr)^{1/2}}_{L^{p}}
\lesssim
\norm{f}_{L^{p}}.
\end{equation}
\end{enumerate}
\end{theorem}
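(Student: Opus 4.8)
The plan is to prove the two halves \eqref{eq:53} and \eqref{eq:56} separately, to deduce \eqref{eq:48} for $p=1+\theta$ by combining them through the long/short jump decomposition \eqref{eq:8},
\[
J^{p}_{2}((\calH_{t}^{P}f)_{t>0}) \lesssim J^{p}_{2}((\calH_{2^{k}}^{P}f)_{k\in\Z}) + \norm[\Big]{ \bigl(\sum_{k\in\Z} V^{2}(\calH_{t}^{P}f : t\in[2^{k},2^{k+1}])^{2}\bigr)^{1/2} }_{L^{p}},
\]
and then to pass to $p=(1+\theta)'$ by duality: the adjoint of $\calH_{t}^{P}$ is a truncated singular Radon transform of the same type, with $P$ replaced by $y\mapsto P(-y)$, $\Omega$ replaced by $-\Omega$, and $K$ replaced by $\overline{K(-\,\cdot\,)}$, so it satisfies the same hypotheses with the same modulus of continuity $\omega_{K}$, and the jump estimate and the square-function estimate transfer to the dual exponent via the duality and interpolation properties of $J^{p}_{2}$ from \cite{arxiv:1808.04592}. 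Throughout, after the standard reduction that adjoins auxiliary monomial components to $P$ (together with a linear change of variables in the target), we may assume that every component of $P$ is a monomial, so that $P$ intertwines a fixed family of non-isotropic dilations $A_{t}=\operatorname{diag}(t^{d_{j}})$ on $\R^{d}$; we take $(S_{j})_{j\in\Z}$ to be the non-isotropic Littlewood--Paley projections attached to $(A_{t})$, for which \eqref{eq:3} and \eqref{eq:4} are classical.

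The core is a single-scale analysis. Write $\calH_{2^{k}}^{P}=\sum_{n\ge k}L_{n}$, where $L_{n}$ is the Radon convolution with the normalised restriction $K^{(n)}$ of $K$ to the shell $\Omega_{2^{n+1}}\setminus\Omega_{2^{n}}$; by \eqref{eq:size} and \eqref{eq:K-mod} the $K^{(n)}$ are uniformly bounded with modulus of continuity $\lesssim\omega_{K}$, uniformly in $n$ and in the coefficients of $P$, while \eqref{eq:cancel1} gives $\int K^{(n)}=0$. The multiplier of $L_{n}$ equals $m_{n}(A_{2^{n}}\xi)$ with $m_{n}(\eta)=\int e^{-2\pi i\,\eta\cdot P(y)}K^{(n)}(y)\,\dif y$, and the estimate one needs, uniform in $n$ and in the coefficients of $P$, is
\[
\abs{m_{n}(\eta)} \lesssim C_{K}\,\min\bigl(\abs{\eta}^{\delta},\abs{\eta}^{-\delta}\bigr),\qquad \delta=\delta(d,k,\deg P)>0,
\]
coming from $m_{n}(0)=0$ when $\abs{\eta}$ is small and from van der Corput / stationary phase bounds for the polynomial phase when $\abs{\eta}$ is large (the Stein--Wainger and Jones--Seeger--Wright oscillatory-integral machinery). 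For $L^{p}$ with $p\neq2$ one compares $L_{n}$, and the partial sums $\sum_{n\ge k}L_{n}$, with their smoothly truncated analogues, handling the latter by the Duoandikoetxea--Rubio de Francia method (Calderón--Zygmund bounds on every $L^{p}$) and the error by $\omega_{K}$ evaluated at the relevant dyadic point. Interpolating the $L^{2}$ decay above — which in the transition region $\abs{\eta}\sim1$ carries a factor $\omega_{K}^{\theta}$ — against the trivial bound produces off-diagonal $L^{p}$ estimates for $\calH_{2^{k}}^{P}S_{j}$ whose constants, after raising to the power $\theta$, are summable precisely because $\norm{\omega_{K}^{\theta}}_{\logDini}<\infty$ (the logarithmic weight absorbs the number of transition scales); the exponent $p=1+\theta$ is exactly what this interpolation delivers.

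Feeding these estimates into Theorem~\ref{thm:duo+rubio:jump}, applied with $T_{t}=\calH_{t}^{P}$, $\bbI=\{2^{k} : k\in\Z\}$, and the $(S_{j})$ above, yields \eqref{eq:53}. For \eqref{eq:56} one uses that $s<t$ forces $\calH_{s}^{P}f-\calH_{t}^{P}f=\int_{\Omega_{t}\setminus\Omega_{s}}f(x-P(y))K(y)\,\dif y$, so that $V^{2}(\calH_{t}^{P}f:t\in[2^{k},2^{k+1}])$ is governed by a single shell at scale $2^{k}$, whose partial truncations are averaging operators of Radon type ($L^{1}$-normalised kernels of mass $\lesssim C_{K}$): one can therefore either invoke the short-variation bound for such averages (Theorem~\ref{thm:short-var}), or argue directly via $V^{2}(g:[a,b])^{2}\lesssim(b-a)\int_{a}^{b}\abs{g'(t)}^{2}\,\dif t$ applied to a smoothly truncated surrogate, together with a further splitting of the rough-truncation error into $y$-annuli with gains $\omega_{K}(2^{-i})$; summing over $k\in\Z$ collapses the left side of \eqref{eq:56} to an $L^{p}$ square function controlled by \eqref{eq:4}. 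The power $\theta/2$ enters because the endpoint $L^{p}$ bound requires interpolating the $L^{2}$ square-function estimate — which costs a power $1/2$ on $\omega_{K}$ — against a trivial bound, so $\norm{\omega_{K}^{\theta/2}}_{\Dini}<\infty$ is what is needed. Combining \eqref{eq:53} and \eqref{eq:56} through the displayed decomposition gives \eqref{eq:48} at $p=1+\theta$, and duality gives it at $p=(1+\theta)'$.

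The main obstacle is the single-scale analysis: producing the oscillatory decay of $m_{n}$ with constants depending only on $d$, $k$, $\deg P$, $C_{K}$, and $\omega_{K}$ — never on the coefficients of $P$ — and then tracking, through the interpolations that produce $p=1+\theta$ and $p=(1+\theta)'$, exactly which power of $\omega_{K}$ must be $\logDini$-summable (for the jump / long-variation part) and which $\Dini$-summable (for the short-variation part). One must separate cleanly the low-frequency regime (cancellation), the high-frequency regime (oscillation), and the transition regime (smoothness), and verify that the count of transition scales is absorbed by the logarithmic weight in the log-Dini norm. A secondary point is that, $\calH_{t}^{P}$ being non-positive, only Theorem~\ref{thm:duo+rubio:jump} is available for the long-jump part — in contrast with the averaging operator $\calM_{t}^{P}$ of Theorem~\ref{thm:radon-av} — while the positive-operator Theorems~\ref{thm:duo+rubio:max} and \ref{thm:short-var} can be brought in only for the short-variation step after the reduction to averages.
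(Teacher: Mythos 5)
Your overall architecture is the paper's: decompose via \eqref{eq:8}, treat the long jumps through Theorem~\ref{thm:duo+rubio:jump} with $B_k=\calH_{2^k}-\calH_{2^{k+1}}$ (the weight $\abs{j}+1$ in $\tilde{\bfa}$ being the source of the $\logDini$ condition), and treat the short variations through Theorem~\ref{thm:short-var}. But the central analytic input is asserted in a form that is false. You claim the single-scale multipliers satisfy $\abs{m_{n}(\eta)}\lesssim C_K\min(\abs{\eta}^{\delta},\abs{\eta}^{-\delta})$ with a constant depending only on the size constant $C_K$. For rough kernels no such power decay holds: taking $K$ on a shell to be a mean-zero correction of $e^{-2\pi i\eta_{0}\cdot P(y)}\abs{y}^{-k}$ gives the same $C_K$ but no decay at $\eta_{0}$, and even for kernels admissible in Theorem~\ref{thm:radon-sing} (e.g.\ $\omega_K$ logarithmic) the decay is only as fast as $\omega_K$ permits. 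The correct high-frequency estimate must carry the term $\omega_K\bigl((t\frakq_{*}(\xi))^{-1/d}\bigr)$ as in \eqref{eq:psi-decay-infty}, and producing it is exactly what the rough-amplitude van der Corput bound (Proposition~\ref{prop:vdC-multidim}) together with Lemma~\ref{lem:boundary-of-convex} and the smoothness hypothesis \eqref{eq:smoothness} is for; with the corrected bound one has $a_{j}\simeq 2^{-\abs{j}/d}+\omega_K(2^{-\abs{j}/d})$, and the $\logDini$ and $\Dini$ hypotheses are precisely what make $\tilde{\bfa}$ and $\bfa$ finite --- if your displayed bound were true they would play no role in the $L^{2}$ estimates at all, which is a sign of the error. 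You yourself defer this single-scale analysis as ``the main obstacle,'' so the core ingredient is missing rather than supplied. Two related mischaracterizations: the smooth truncations are measures on a $k$-dimensional surface, so ``Calder\'on--Zygmund bounds on every $L^{p}$'' \`a la Duoandikoetxea--Rubio de Francia are not available; the $L^{p}$ input comes from the domination $\abs{\sigma_{t}}\lesssim\mu_{t}$ by the positive Radon averages and Theorem~\ref{thm:duo+rubio:max}, which is needed already to verify \eqref{eq:23}, \eqref{eq:24} for the long-jump part and \eqref{eq:short-long-max-hypothesis-pos} for the short-variation part --- not only ``after a reduction to averages'' (the operators $\Delta(\calH_{t})$ are not positive, so case (\ref{thm:short-var:case-holder}), not case (\ref{thm:short-var:case-holder-positive}), of Theorem~\ref{thm:short-var} applies).

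The passage to $p=(1+\theta)'$ is also a genuine gap as written. Applying the theorem at $p=1+\theta$ to the adjoint family (which is indeed of the same type) merely reproduces the same statement for that family; jump quasi-seminorm bounds and $V^{2}$-valued square-function bounds are not operator norms between a space and its dual, so they do not transpose, and no duality property of $J^{p}_{2}$ from \cite{arxiv:1808.04592} effects the transfer you describe. The paper instead obtains the dual exponent from the convolution-operator clauses of Lemma~\ref{lem:duo+rubio:square} and Theorems~\ref{thm:duo+rubio:jump} and~\ref{thm:short-var}: the genuine $L^{p}(\ell^{2})$ square-function inequalities inside the proofs are dualized and the whole argument is re-run at $p'=q_{1}'$. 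Since you already invoke those theorems, the repair is simply to use their stated range $q_{1}\leq p\leq q_{1}'$ rather than to dualize the conclusion. Finally, your heuristic for the exponent $\theta/2$ is not the actual mechanism: it arises from summing $2^{-(q_{1}-1)l/2}\min\{1,2^{l}\omega(2^{-\abs{j}})\}^{q_{1}-1}$ over the refinement parameter $l$ in Theorem~\ref{thm:short-var}, which collapses to $\omega(2^{-\abs{j}})^{\theta/2}$ and hence to $\norm{\omega_K^{\theta/2}}_{\Dini}$.
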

The inequality \eqref{eq:41} was proved in \cite{MR2434308} for the averages $\calM_{t}^P$ over Euclidean balls.
The inequality \eqref{eq:48} was proved in \cite{MR2434308} for monomial curves, i.e.\ in the case $k=1$, $d=2$, $K(y)=y^{-1}$ and $P(x)=(x, x^a)$, where $a>1$.
General polynomials were considered in \cite{MR3681393} (although jump estimates are not explicitly stated in that article they can also be obtained with minor modifications of the proofs).
Multi-dimensional variants of $\calH_{t}^{P}$ were also studied in \cite{MR3681393} under stronger regularity conditions imposed on the kernel $K$.
Inequalities \eqref{eq:41} and \eqref{eq:48} will be used to establish jump inequalities for the discrete analogues of \eqref{eq:40} and \eqref{eq:47} in \cite{arxiv:1809.03803}.

Finally we provide van der Corput integral estimates in Lemma~\ref{lem:vdC} and Proposition~\ref{prop:vdC-multidim}, which have
the feature that permit to handle the oscillatory integrals with
non-smooth amplitudes. Its broader scope will be needed in the proof
of Theorem~\ref{thm:radon-sing}.

\section{An abstract approach to jump inequalities}
\label{sec:gener-prel}
\subsection{Preliminaries}
Let $(X,\calB,\frakm)$ be a $\sigma$-finite measure space endowed with a sequence of linear Littlewood--Paley operators $(S_j)_{j\in\Z}$ satisfying \eqref{eq:3}, \eqref{eq:4}.
Assume that $(T_t)_{t\in\bbI}$ is a family of linear operators acting on $L^{1}(X) + L^{\infty}(X)$, where the index set $\bbI$ is a subset of $(0, \infty)$. Under suitable conditions imposed on the family $(T_t)_{t\in\bbI}$ in terms of its interactions with the Littlewood--Paley operators $(S_j)_{j\in\Z}$ as in the introduction, we will study strong uniform jump inequalities
\begin{align}
\label{eq:49}
J^{p}_{2}((T_tf)_{t\in\bbI} : X \to \C)\lesssim\norm{f}_{L^p}
\end{align}
in various ranges of $p$'s, see Theorem~\ref{thm:duo+rubio:max}, Theorem~\ref{thm:duo+rubio:jump} and Theorem~\ref{thm:short-var}.

To avoid further problems with measurability we will always assume that $\bbI$ is countable. Usually $\bbI=\D:=\Set{2^n \given n\in\Z}$ the set of all dyadic numbers or $\bbI=\bbU:=\bigcup_{n\in\Z}2^{-n}\N$ the set of non-negative rational numbers whose denominators in reduced form are powers of $2$. In practice, the countability assumption may be removed if for every $f\in L^{1}(X) + L^{\infty}(X)$ the function $\bbI\ni t\mapsto T_tf(x)$ is continuous for $\frakm$-almost every $x\in X$.
In our applications this will be always our case.

We recall the decomposition into long and short jumps from \cite[Lemma 1.3]{MR2434308}, which tells that for every $\lambda>0$ we have
\begin{align}
\label{eq:8}
\begin{split}
\lambda N_{\lambda}(T_tf(x):t\in\bbI) ^{1/2}&\lesssim
\lambda N_{\lambda/3}(T_tf(x):t\in\D)^{1/2}\\
&+\Big(\sum_{k\in\Z} \big(\lambda N_{\lambda}(T_tf(x):t \in[2^k, 2^{k+1})\cap\bbI)^{1/2}\big)^2\Big)^{1/2}.
\end{split}
\end{align}
In other words the $\lambda$-jump counting function can be dominated by the long jumps (the first term in \eqref{eq:8} with $t\in\D$) and the short jumps (the square function in \eqref{eq:8}).
Similar inequalities hold for the maximal function and for $r$-variations.

We deal with $L^p$ bounds for the long jump counting function corresponding to $T_{t}$ with $t\in\D$ in two ways, similarly to \cite{MR837527}.
The first approach is to find an approximating family of operators (see the family $(P_k)_{k\in\Z}$ in Theorem~\ref{thm:duo+rubio:max}) for which the bound in question is known and control a square function that dominates the error term, see \eqref{eq:6} in Theorem~\ref{thm:duo+rubio:max}.
In our case this method works for positive operators with martingales or related operators as the approximating family.
The second approach is to express $T_{2^k}$ as a telescoping sum
\begin{align}
\label{eq:30}
T_{2^k}f=\sum_{j\ge k}T_{2^j}f-T_{2^{j+1}}f=\sum_{j\ge k}B_jf
\end{align}
and try to deduce bounds in question from the behavior of $B_j=T_{2^{j}}-T_{2^{j+1}}$.
This approach is needed if $T_t$ is a truncated singular integral type operator, see Theorem~\ref{thm:duo+rubio:jump}.
Similar strategies also yield $L^p$ bounds for maximal functions $\sup_{k\in\Z}\abs{T_{2^k}f(x)}$ or $r$-variations $V^r(T_{2^k}f(x): k\in\Z)$.

In order to deal with short jumps we note that the square function on the right-hand side of \eqref{eq:8} is dominated by the square function associated with $2$-variations, which in turn is controlled by a series of square functions
\begin{align}
\label{eq:10}
\begin{split}
\Big(\sum_{k\in\Z}& \big(V^2(T_tf(x) :t \in[2^k, 2^{k+1})\cap\bbI)\big)^2\Big)^{1/2}\\
&\le
\sqrt{2}\sum_{l\ge 0} \Big(\sum_{k\in\Z} \sum_{m = 0}^{2^{l}-1}\abs{(T_{2^k+{2^{k-l}(m+1)}} - T_{2^k+{2^{k-l}m}})f(x)}^2 \Big)^{1/2}.
\end{split}
\end{align}
The square function on the right-hand side of \eqref{eq:10} gives rise to assumption \eqref{eq:sq-L2}.
Inequality \eqref{eq:10} follows from the next lemma with $\frakg(t)=T_{2^k+t}f(x)$ and $r=2$.
\begin{lemma}
\label{lem:lewko-lewko}
Let $r\in[1, \infty)$, $k\in\Z$, and a function $\frakg:[0, 2^{k}]\cap\bbU\to\C$ be given.
Then
\begin{align}
\begin{split}
\label{eq:lewko-lewko}
V^r\big(\frakg(t):t \in[0, 2^{k}]\cap\bbU\big)
&\le2^{\frac{r-1}{r}} \sum_{l\ge 0} \Big( \sum_{m = 0}^{2^{l}-1}\abs{\frakg(2^{k-l}(m+1)) - \frakg(2^{k-l}m)}^r \Big)^{1/r}.
\end{split}
\end{align}
\end{lemma}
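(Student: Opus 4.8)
The plan is to prove \eqref{eq:lewko-lewko} by a dyadic telescoping argument that expresses the increment of $\frakg$ across any pair of points in $[0,2^k]\cap\bbU$ in terms of increments across dyadic subintervals of length $2^{k-l}$, summed over scales $l\ge 0$. First I would reduce to the case $k=0$ by rescaling the parameter $t\mapsto t/2^k$, which changes nothing in either side of \eqref{eq:lewko-lewko}; so assume $\frakg$ is defined on $[0,1]\cap\bbU$. Since every point of $[0,1]\cap\bbU$ is a dyadic rational, any finite increasing sequence $t_0<\dotsb<t_J$ lives in $2^{-L}\Z\cap[0,1]$ for some $L\ge 0$; fixing such an $L$, I would work on the grid of mesh $2^{-L}$ and let $L\to\infty$ at the end (the right-hand side of \eqref{eq:lewko-lewko} is a supremum-free sum, so monotone convergence poses no issue).

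The core step is the telescoping identity. For a point $t=2^{-L}n$ with $0\le n\le 2^L$, write its base-$2$ expansion and peel off one dyadic digit at a time: define the ``ancestors'' $t^{(l)}$ of $t$ at scale $l$ by rounding $t$ down to the nearest multiple of $2^{-l}$, for $0\le l\le L$, so $t^{(0)}\in\{0\}$, $t^{(L)}=t$, and each $t^{(l+1)}-t^{(l)}\in\{0,2^{-(l+1)}\}$. Then
\[
\frakg(t)-\frakg(t^{(0)})=\sum_{l=0}^{L-1}\bigl(\frakg(t^{(l+1)})-\frakg(t^{(l)})\bigr),
\]
and each nonzero summand is a single grid-increment of the form $\frakg(2^{-(l+1)}(m+1))-\frakg(2^{-(l+1)}m)$. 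Doing this for two points $t_{j-1}<t_j$ in the chosen chain and subtracting, the contribution of scale $l$ to $\frakg(t_j)-\frakg(t_{j-1})$ is a sum of at most — and here is the key combinatorial point — boundedly many length-$2^{-l-1}$ grid-increments, none of which is reused across different $j$ because the chain is increasing and the dyadic intervals at a fixed scale are disjoint. Collecting terms by scale, I would bound
\[
\Bigl(\sum_{j=1}^{J}\abs{\frakg(t_j)-\frakg(t_{j-1})}^r\Bigr)^{1/r}
\le \sum_{l\ge 0}\Bigl(\sum_{j=1}^{J}\abs[\big]{\text{scale-}l\text{ part of }\frakg(t_j)-\frakg(t_{j-1})}^r\Bigr)^{1/r}
\]
by the triangle inequality in $\ell^r$ in the variable $l$, and then estimate each scale-$l$ piece by the full sum $\bigl(\sum_{m=0}^{2^l-1}\abs{\frakg(2^{-l}(m+1))-\frakg(2^{-l}m)}^r\bigr)^{1/r}$ over all grid-increments at that scale. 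The factor $2^{(r-1)/r}$ arises from the fact that the scale-$l$ part of a single increment $\frakg(t_j)-\frakg(t_{j-1})$ may involve two grid-increments at scale $l$ (one near each endpoint, when $t_{j-1}$ and $t_j$ straddle several coarser blocks), so one applies $\abs{a+b}^r\le 2^{r-1}(\abs a^r+\abs b^r)$ once; I would verify the bookkeeping shows no worse constant.

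The main obstacle I anticipate is the disjointness/non-overfitting accounting: one must check carefully that, after expanding every $\frakg(t_j)-\frakg(t_{j-1})$ into its dyadic-scale pieces, each individual grid-increment $\frakg(2^{-l}(m+1))-\frakg(2^{-l}m)$ is charged to at most one value of $j$ (so that summing $j$-wise and then comparing to the full scale-$l$ sum loses nothing beyond the $2^{(r-1)/r}$), and that the ``at most two per scale per $j$'' bound genuinely holds — this is where the increasing order of the $t_j$ and the nesting structure of dyadic intervals are used, essentially the observation that a chain visiting a dyadic block of scale $l$ enters and exits it at most once. Once this is pinned down, passing $L\to\infty$ and reinstating the scaling gives \eqref{eq:lewko-lewko}, and specializing $\frakg(t)=T_{2^k+t}f(x)$, $r=2$ together with summing in $k$ and applying the triangle inequality in $\ell^2_k$ inside the $\ell^1_l$ sum yields \eqref{eq:10}.
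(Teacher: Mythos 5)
Your overall strategy (decompose into dyadic scales, apply the $\ell^r$ triangle inequality over scales, and the inequality $\abs{a+b}^r\le 2^{r-1}(\abs a^r+\abs b^r)$ within each scale) is the same as the paper's, but the specific decomposition you chose breaks down exactly at the point you flagged as the anticipated obstacle, and that obstacle is not a technicality: it is false as you stated it. If you define the ancestors $t^{(l)}$ by rounding \emph{down} and write $\frakg(t_j)-\frakg(t_{j-1})=\sum_l\bigl(a^{(l)}_j-a^{(l)}_{j-1}\bigr)$ with $a^{(l)}_j:=\frakg(t_j^{(l+1)})-\frakg(t_j^{(l)})$, then the grid increment $a^{(l)}_j$ lives in the interval $[t_j^{(l)},t_j^{(l+1)}]\subseteq[0,t_j]$, which in general lies to the \emph{left} of $t_j$ and even of $t_{j-1}$, i.e.\ outside $(t_{j-1},t_j)$. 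Consequently the increment attached to an interior point $t_j$ appears in the scale-$l$ part of \emph{two} consecutive differences (with a $+$ sign for index $j$ and a $-$ sign for index $j+1$), so your claim that each grid increment is charged to at most one $j$ fails. With the natural accounting (each $\abs{I_m}^r$ charged at most twice, plus one use of the power inequality) your scheme yields the estimate with constant $2$ per scale rather than $2^{(r-1)/r}=2^{1-1/r}$; this is harmless for all applications in the paper, but it does not prove the lemma as stated.

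The repair is to decompose the \emph{interval} $[t_{j-1},t_j]$ rather than the two ancestor chains separately: round $t_{j-1}$ \emph{up} and $t_j$ \emph{down} at each scale, so that the scale-$l$ correction for the $j$-th difference consists of at most two grid increments contained in $[t_{j-1},t_j]$; since the open intervals $(t_{j-1},t_j)$ are pairwise disjoint, no grid increment is reused across different $j$, and one application of $\abs{a+b}^r\le 2^{r-1}(\abs a^r+\abs b^r)$ per $j$ gives exactly the constant $2^{(r-1)/r}$ per scale. This is precisely what the paper's proof implements, one scale at a time: it rounds $t_{j-1}$ up to $u_{j-1}$ and $t_j$ down to $s_j$ (nearest even integers), bounds the fine part $(\frakg(t_j)-\frakg(s_j))+(\frakg(u_{j-1})-\frakg(t_{j-1}))$ by the $l=0$ term using $s_j\ge u_{j-1}$, and handles the coarse part $\frakg(s_j)-\frakg(u_{j-1})$ by induction applied to $\frakg(2\cdot)$, so the constant does not accumulate over scales. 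Your reduction to a finite grid and the rescaling step are fine; it is only the one-sided (floor-only) telescoping and the disjointness claim built on it that must be replaced.
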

The variation norm on the left-hand side of \eqref{eq:lewko-lewko} can be extended to all $t\in[0, 2^k]$ if $\frakg:[0, 2^{k}]\to\C$ is continuous.
Lemma~\ref{lem:lewko-lewko} originates in the paper of Lewko and Lewko \cite{MR2885959}, where it was observed that the $2$-variation norm of a sequence of length $N$ can be controlled by the sum of $\log N$ square functions and this observation was used to obtain a variational version of the Rademacher--Menshov theorem.
Inequality \eqref{eq:lewko-lewko}, essentially in this form, was independently proved by the first author and Trojan in \cite{MR3595493} and used to estimate $r$-variations for discrete Radon transforms. Lemma~\ref{lem:lewko-lewko} has been used in several recent articles on $r$-variations, including \cite{MR3777413}. For completeness we include a proof, which is shorter than the previous proofs.
\begin{proof}[Proof of Lemma~\ref{lem:lewko-lewko}]
Due to monotonicity of $r$-variations
it suffices to prove \eqref{eq:lewko-lewko} with $\bbU_N=\Set{u/2^{N} \given u\in\N \text{ and } 0\le u\le 2^{k+N}}$ in place of
$[0,2^{k}]\cap\bbU$.
Observe that
\[
V^r\big(\frakg(t):t \in \bbU_N\big)=V^r\big(\frakg(t/2^{N}):t \in [0,2^{k+N}]\cap\Z \big).
\]
The proof will be completed if we show that
\begin{align}
\label{eq:11}
V^r\big(\frakg(t):t \in [0,2^{n}]\cap\Z\big)
\leq
2^{1-1/r} \sum_{l=0}^n \Big( \sum_{m = 0}^{2^{n-l}-1}\abs{\frakg(2^{l}(m+1)) - \frakg(2^{l}m)}^r \Big)^{1/r}.
\end{align}
Once \eqref{eq:11} is established we apply it with $\frakg(t/2^{N})$ in place of $\frakg(t)$ and $n=k+N$ and obtain \eqref{eq:lewko-lewko}.
We prove \eqref{eq:11} by induction on $n$. The case $n=0$ is easy to verify.
Let $n\geq 1$ and suppose that the claim is known for $n-1$.
Let $0\leq t_{0} < \dotsb < t_{J} < 2^{n}$ be an increasing sequence of integers.
For $j\in\Set{0,\dotsc,J}$ let $s_{j}\leq t_{j} \leq u_{j}$ be the closest smaller and larger even integer, respectively.
Then
\begin{multline*}
\Bigl( \sum_{j=1}^{J} \abs{\frakg(t_{j})-\frakg(t_{j-1})}^{r} \Bigr)^{1/r}\\
=
\Bigl( \sum_{j=1}^{J} \abs{(\frakg(t_{j})-\frakg(s_{j}))+(\frakg(s_{j})-\frakg(u_{j-1}))+(\frakg(u_{j-1})-\frakg(t_{j-1}))}^{r} \Bigr)^{1/r}\\
\leq
\Bigl( \sum_{j=1}^{J} \abs{\frakg(s_{j})-\frakg(u_{j-1})}^{r} \Bigr)^{1/r}
+
\Bigl( \sum_{j=1}^{J} \abs{(\frakg(t_{j})-\frakg(s_{j}))+(\frakg(u_{j-1})-\frakg(t_{j-1}))}^{r} \Bigr)^{1/r}.
\end{multline*}
In the first term we notice that the sequence $u_{0} \leq s_{1} \leq u_{1} \leq \dotsb$ is monotonically increasing and takes values in $2\N$, so we can apply the induction hypothesis to the function $\frakg(2\cdot)$.
In the second term we use the elementary inequality $(a+b)^{r}\leq 2^{r-1}(a^{r}+b^{r})$ and observe $\abs{t_{j}-s_{j}}\leq 1$, $\abs{t_{j-1}-u_{j-1}}\leq 1$, and $s_{j}\geq u_{j-1}$, so that this is bounded by the $l=0$ summand in \eqref{eq:11}.
\end{proof}

\subsection{Preparatory estimates}

We recall Lemma~\ref{lem:duo+rubio} that deduces a vector-valued inequality from a maximal one. Then we apply it to obtain Lemma~\ref{lem:duo+rubio:square}.
\begin{lemma}[{cf.\ \cite[p.\ 544]{MR837527}}]
\label{lem:duo+rubio}
Suppose that $(X,\calB,\frakm)$ is a $\sigma$-finite measure space and $(M_{k})_{k\in\J}$ is a sequence of linear operators on $L^{1}(X)+L^{\infty}(X)$ indexed by a countable set $\J$. The corresponding maximal operator is defined by
\[
M_{*, \J}f := \sup_{k\in\J} \sup_{\abs{g} \leq \abs{f}} \abs{M_{k}g},
\]
where the supremum is taken in the lattice sense.
Let $q_{0},q_{1} \in [1,\infty]$ and $0\leq \theta \leq 1$ with $\frac12 = \frac{1-\theta}{q_{0}}$ and $q_0\le q_1$.
Let $q_{\theta}\in[q_0, q_1]$ be given by
$\frac{1}{q_{\theta}} = \frac{1-\theta}{q_{0}} + \frac{\theta}{q_{1}} = \frac12 + \frac{1-q_{0}/2}{q_{1}}$.
Then
\[
\norm[\Big]{ \big( \sum_{k\in\J} \abs{M_{k}g_{k}}^{2}\big)^{1/2}}_{L^{q_{\theta}}}
\leq
(\sup_{k\in\J} \norm{M_{k}}_{L^{q_{0}}\to L^{q_{0}}})^{1-\theta} \norm{M_{*, \J}}_{L^{q_{1}}\to L^{q_{1}}}^{\theta}
\norm[\Big]{ \big( \sum_{k\in\J} \abs{g_{k}}^{2}\big)^{1/2}}_{L^{q_{\theta}}}.
\]
\end{lemma}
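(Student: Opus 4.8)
The plan is to \emph{linearize} the maximal quantity on the right and then read off the estimate from a single application of complex interpolation for mixed-norm (vector-valued) $L^{p}$ spaces; this is, in essence, the Duoandikoetxea--Rubio de Francia argument referenced above. Write $A := \sup_{k\in\J}\norm{M_{k}}_{L^{q_{0}}\to L^{q_{0}}}$ and $B := \norm{M_{*,\J}}_{L^{q_{1}}\to L^{q_{1}}}$, and assume $A,B<\infty$, since otherwise there is nothing to prove; note that the first hypothesis gives $1-\theta=q_{0}/2$, so automatically $q_{0}\in[1,2]$ and $\theta\in[0,1/2]$. I would introduce the linear operator $T$ acting on $\J$-indexed sequences by $T\big((g_{k})_{k\in\J}\big) := (M_{k}g_{k})_{k\in\J}$. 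Since each $M_{k}$ is linear on $L^{1}(X)+L^{\infty}(X)$, the map $T$ is well defined and linear on every sequence with entries in $L^{1}(X)+L^{\infty}(X)$, in particular on $L^{q_{0}}(X;\ell^{q_{0}}(\J))+L^{q_{1}}(X;\ell^{\infty}(\J))$. The conclusion of the lemma is then \emph{exactly} the operator bound
\[
\norm{T}_{L^{q_{\theta}}(X;\ell^{2}(\J))\to L^{q_{\theta}}(X;\ell^{2}(\J))}\le A^{1-\theta}B^{\theta},
\]
because $\big(\sum_{k}\abs{M_{k}g_{k}}^{2}\big)^{1/2}=\norm{T(g_{k})_{k}}_{\ell^{2}(\J)}$ and $\big(\sum_{k}\abs{g_{k}}^{2}\big)^{1/2}=\norm{(g_{k})_{k}}_{\ell^{2}(\J)}$ pointwise.

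Next I would record the two endpoint estimates for $T$. For the lower endpoint, Tonelli's theorem together with the scalar bounds for the $M_{k}$ gives
\[
\norm{T(g_{k})_{k}}_{L^{q_{0}}(\ell^{q_{0}})}^{q_{0}}
=\sum_{k\in\J}\norm{M_{k}g_{k}}_{L^{q_{0}}}^{q_{0}}
\le A^{q_{0}}\sum_{k\in\J}\norm{g_{k}}_{L^{q_{0}}}^{q_{0}}
=A^{q_{0}}\norm{(g_{k})_{k}}_{L^{q_{0}}(\ell^{q_{0}})}^{q_{0}},
\]
so $\norm{T}_{L^{q_{0}}(\ell^{q_{0}})\to L^{q_{0}}(\ell^{q_{0}})}\le A$. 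For the upper endpoint, set $h:=\sup_{j\in\J}\abs{g_{j}}$ (measurable because $\J$ is countable). Then $\abs{g_{k}}\le h$ for every $k$, so by the very definition of $M_{*,\J}$ — this is precisely where the inner lattice supremum over $\abs{g}\le\abs{f}$ is used — we get $\abs{M_{k}g_{k}}\le M_{*,\J}h$ for every $k$, hence $\sup_{k\in\J}\abs{M_{k}g_{k}}\le M_{*,\J}h$ and
\[
\norm{T(g_{k})_{k}}_{L^{q_{1}}(\ell^{\infty})}
=\norm[\big]{\sup_{k\in\J}\abs{M_{k}g_{k}}}_{L^{q_{1}}}
\le\norm{M_{*,\J}h}_{L^{q_{1}}}
\le B\norm{h}_{L^{q_{1}}}
=B\norm{(g_{k})_{k}}_{L^{q_{1}}(\ell^{\infty})}.
\]

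Finally I would invoke the complex interpolation (Calderón product) formula for mixed-norm spaces: for $0\le\theta\le1$ one has $\big[L^{q_{0}}(\ell^{q_{0}}),L^{q_{1}}(\ell^{\infty})\big]_{\theta}=L^{q_{\theta}}(\ell^{s_{\theta}})$ with $\frac{1}{q_{\theta}}=\frac{1-\theta}{q_{0}}+\frac{\theta}{q_{1}}$ and $\frac{1}{s_{\theta}}=\frac{1-\theta}{q_{0}}+\frac{\theta}{\infty}=\frac{1-\theta}{q_{0}}$. The hypothesis $\frac{1-\theta}{q_{0}}=\frac12$ forces $s_{\theta}=2$, while the interpolated exponent $q_{\theta}$ coincides with the one in the statement; interpolating the two endpoint bounds for $T$ then yields $\norm{T}_{L^{q_{\theta}}(\ell^{2})\to L^{q_{\theta}}(\ell^{2})}\le A^{1-\theta}B^{\theta}$, which is the claim. (The borderline case $\theta=0$, i.e.\ $q_{0}=2$, is just the trivial identity $\norm{T(g_{k})_{k}}_{L^{2}(\ell^{2})}^{2}=\sum_{k}\norm{M_{k}g_{k}}_{L^{2}}^{2}\le A^{2}\norm{(g_{k})_{k}}_{L^{2}(\ell^{2})}^{2}$ and needs no interpolation.) The one step deserving care is the appeal to the interpolation theorem when an $\ell^{\infty}$ endpoint — and, if $q_{1}=\infty$ is allowed, also an $L^{\infty}$ endpoint — is present: one should either quote the vector-valued form of Calderón's theorem directly, or first prove the estimate for finitely supported sequences of simple functions (dense in both endpoint spaces) and pass to general $(g_{k})_{k}$ by monotone convergence. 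The countability of $\J$ and the $\sigma$-finiteness of $X$ guarantee that no measurability issues arise along the way.
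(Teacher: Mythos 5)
Your proof is correct and follows essentially the same route as the paper: you linearize via the sequence operator $(g_k)_k\mapsto(M_kg_k)_k$, prove the two endpoint bounds on $L^{q_0}(\ell^{q_0})$ (Fubini/Tonelli plus the uniform scalar bound) and on $L^{q_1}(\ell^{\infty})$ (the lattice maximal operator applied to $\sup_k\abs{g_k}$), and conclude by complex interpolation of the mixed-norm spaces, with the hypothesis $\frac{1-\theta}{q_0}=\frac12$ forcing the interpolated sequence exponent to be $2$. Your added remarks on the $\theta=0$ case and on density/measurability are fine but not a departure from the paper's argument.
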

\begin{proof}
Consider the operator $\tilde M g := (M_{k}g_{k})_{k\in\J}$ acting on sequences of functions $g=(g_k)_{k\in\J}$ in $L^{1}(X)+L^{\infty}(X)$.
By Fubini's theorem
\begin{align*}
\norm{\tilde M g}_{L^{q_{0}}(\ell^{q_{0}})}
&=
\norm[\big]{ \norm{M_{k} g_{k}}_{L^{q_{0}}}}_{\ell^{q_{0}}}\\
&\leq
(\sup_{k\in\J} \norm{M_{k}}_{L^{q_{0}}\to L^{q_{0}}})
\norm[\big]{ \norm{g_{k}}_{L^{q_{0}}}}_{\ell^{q_{0}}}\\
&=
(\sup_{k\in\J} \norm{M_{k}}_{L^{q_{0}}\to L^{q_{0}}})
\norm{g}_{L^{q_{0}}(\ell^{q_{0}})}.
\end{align*}
By definition of the maximal operator
\begin{align*}
\norm{\tilde M g}_{L^{q_{1}}(\ell^{\infty})}
&=
\norm[\big]{\sup_{k\in\J} \abs{M_{k} g_{k}}}_{L^{q_{1}}}\\
&\leq
\norm[\big]{M_{*, \J} (\sup_{k\in\J} \abs{g_{k}})}_{L^{q_{1}}}\\
&\leq
\norm{M_{*, \J}}_{L^{q_{1}}\to L^{q_{1}}}
\norm[\big]{\sup_{k\in\J} \abs{g_{k}}}_{L^{q_{1}}}\\
&=
\norm{M_{*, \J}}_{L^{q_{1}}\to L^{q_{1}}}
\norm{g}_{L^{q_{1}}(\ell^{\infty})}.
\end{align*}
The claim for $q_{\theta}\in[q_0, q_1]$ follows by complex interpolation between $L^{q_{0}}(X; \ell^{q_{0}}(\J))$ and $L^{q_{1}}(X;\ell^{\infty}(\J))$.
\end{proof}
\begin{lemma}
\label{lem:duo+rubio:square}
Suppose that $(X,\calB,\frakm)$ is a $\sigma$-finite measure space with a sequence of operators $(S_{k})_{k\in\Z}$ that satisfy the Littlewood--Paley inequality \eqref{eq:4}. Let $1\leq q_{0} \leq q_{1} \leq 2$ and $L\in\N$ be a positive integer and let $\V_L=\Set{(k, l)\in\Z^2 \given 0\le l\le L-1}$.
Let $(M_{k,l})_{(k, l)\in\V_L}$ be a sequence of operators bounded on $L^{q_{1}}(X)$ such that
\begin{equation}
\label{eq:duo+rubio:off-diag}
\norm[\Big]{ \bigl( \sum_{k\in\Z} \sum_{l=0}^{L-1} \abs{ M_{k,l} S_{k+j} f }^{2} \bigr)^{1/2} }_{L^2}
\leq
a_{j} \norm{f}_{L^2}, \qquad f\in L^{2}(X)
\end{equation}
for some positive numbers $(a_{j})_{j\in\Z}$.
Then for $p=q_{1}$ and for all $f\in L^p(X)$ we have
\begin{align}
\label{eq:1}
\begin{split}
\MoveEqLeft
\norm[\Big]{\big(\sum_{k\in\Z} \sum_{l=0}^{L-1} \abs{ M_{k,l} S_{k+j} f}^2\big)^{1/2}}_{L^p}\\
&\lesssim L^{\frac{2-q_{1}}{2-q_{0}} \frac12}\big(\sup_{(k, l)\in\V_L} \norm{M_{k,l}}_{L^{q_{0}}\to L^{q_{0}}}^{\frac{2-q_{1}}{2-q_{0}} \frac{q_{0}}{2}}\big) \norm{M_{*, \V_L}}_{L^{q_{1}}\to L^{q_{1}}}^{\frac{2-q_{1}}{2}} a_{j}^{\frac{q_{1}-q_{0}}{2-q_{0}}}
\norm{f}_{L^p}.
\end{split}
\end{align}
If $M_{k,l}$ are convolution operators on an abelian group $\GG$, then \eqref{eq:1} also holds for $q_{1} \leq p \leq q_{1}'$.
The implicit constants in the conclusion do not depend on the qualitative bounds that we assume for the operators $M_{k,l}$ on $L^{q_{1}}(X)$.
\end{lemma}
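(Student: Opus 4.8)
The plan is to obtain \eqref{eq:1} at $p=q_1$ by interpolating two estimates for the linear operator $\tilde U_jf:=(M_{k,l}S_{k+j}f)_{(k,l)\in\V_L}$, regarded as a map from $L^p(X)$ into $L^p(X;\ell^2(\V_L))$: on the one hand the off-diagonal $L^2$ bound \eqref{eq:duo+rubio:off-diag}, which carries the decaying constant $a_j$; on the other hand a ``stationary'' $L^{q_\theta}$ bound, with $q_\theta\in(1,q_1]$ determined below, obtained by feeding Lemma~\ref{lem:duo+rubio} into the Littlewood--Paley inequality \eqref{eq:4}. We may assume $\sup_{(k,l)\in\V_L}\norm{M_{k,l}}_{L^{q_0}\to L^{q_0}}$ and $\norm{M_{*,\V_L}}_{L^{q_1}\to L^{q_1}}$ are both finite, as otherwise \eqref{eq:1} is vacuous; then $\tilde U_j$ is bounded on $L^2$ and on $L^{q_\theta}$, which legitimizes the interpolation.

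For the stationary bound I would apply Lemma~\ref{lem:duo+rubio} with $\J=\V_L$, operators $(M_{k,l})_{(k,l)\in\V_L}$, the given $q_0\le q_1$, $\theta:=1-q_0/2$ (so that $\tfrac12=\tfrac{1-\theta}{q_0}$), and — the key choice — the sequence $g_{k,l}:=S_{k+j}f$, which does not depend on $l$. Then $\bigl(\sum_{(k,l)\in\V_L}\abs{g_{k,l}}^2\bigr)^{1/2}=L^{1/2}\bigl(\sum_{k\in\Z}\abs{S_{k+j}f}^2\bigr)^{1/2}$, and relabelling the summation index reduces the latter to the Littlewood--Paley square function of $f$; applying \eqref{eq:4} at the exponent $q_\theta$, which is determined by $\tfrac1{q_\theta}=\tfrac12+\tfrac{2-q_0}{2q_1}$ and satisfies $1<q_\theta\le q_1$ (unless $q_0=q_1=1$, in which case $q_\theta=1$), one bounds this by $L^{1/2}\norm{f}_{L^{q_\theta}}$. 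Since $1-\theta=q_0/2$ and $\theta=(2-q_0)/2$, Lemma~\ref{lem:duo+rubio} then yields
\begin{equation*}
\norm[\big]{\tilde U_jf}_{L^{q_\theta}(\ell^2)}\lesssim L^{1/2}\bigl(\sup_{(k,l)\in\V_L}\norm{M_{k,l}}_{L^{q_0}\to L^{q_0}}\bigr)^{q_0/2}\norm{M_{*,\V_L}}_{L^{q_1}\to L^{q_1}}^{(2-q_0)/2}\norm{f}_{L^{q_\theta}}.
\end{equation*}

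Complex interpolation of this with \eqref{eq:duo+rubio:off-diag} on the scale $L^p(X;\ell^2(\V_L))$ at the exponent $q_1\in[q_\theta,2]$ finishes the $p=q_1$ case: the interpolation parameter $\beta$ determined by $\tfrac1{q_1}=\tfrac{1-\beta}2+\tfrac\beta{q_\theta}$ equals $\tfrac{2-q_1}{2-q_0}$, so $1-\beta=\tfrac{q_1-q_0}{2-q_0}$, and the interpolated constant $a_j^{1-\beta}$ times the $\beta$-th power of the stationary constant is exactly the right-hand side of \eqref{eq:1}. For convolution operators the range extends to $q_1\le p\le q_1'$ by a duality argument: the adjoints $M_{k,l}^{*}$ are again convolution operators satisfying the same hypotheses with the same $a_j$ (the estimate \eqref{eq:duo+rubio:off-diag}, being an $L^2$ bound, is self-dual), so one obtains \eqref{eq:1} at $p=q_1'$ as well, and complex interpolation between $q_1$ and $q_1'$ covers the intermediate exponents.

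The one genuinely delicate point is the exponent bookkeeping: the three interpolation parameters $\theta$, $q_\theta$, $\beta$ must conspire to reproduce the precise powers of $L$, $\sup_{(k,l)}\norm{M_{k,l}}_{L^{q_0}\to L^{q_0}}$, $\norm{M_{*,\V_L}}_{L^{q_1}\to L^{q_1}}$, and $a_j$ displayed in \eqref{eq:1}. The structural observation that makes this work is to run Lemma~\ref{lem:duo+rubio} against the $l$-independent sequence $g_{k,l}=S_{k+j}f$, so that the Littlewood--Paley inequality absorbs the factor $L^{1/2}$ and the off-diagonal decay $a_j$ enters the interpolation undiluted. The convolution extension to $q_1\le p\le q_1'$ is by comparison a routine exercise in duality and interpolation.
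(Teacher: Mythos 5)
Your treatment of the main case $p=q_1$ is essentially identical to the paper's proof and is correct: you apply Lemma~\ref{lem:duo+rubio} with $\theta=1-q_0/2$ to the $l$-independent sequence $g_{k,l}=S_{k+j}f$, absorb the factor $L^{1/2}$ through the Littlewood--Paley inequality \eqref{eq:4} at $q_\theta$, and complex-interpolate the resulting $L^{q_\theta}(\ell^2)$ bound with the hypothesis \eqref{eq:duo+rubio:off-diag}; your parameter $\beta=\frac{2-q_1}{2-q_0}$ is the paper's $\nu$, and the bookkeeping reproduces \eqref{eq:1} exactly.

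The extension to $q_1\le p\le q_1'$ for convolution operators, however, has a genuine gap. Passing to the adjoint family $(M_{k,l}^*)$ and invoking the already-proved case can only give \eqref{eq:1} for that family at $p=q_1$, and for convolution operators this is equivalent (conjugate and reflect the kernels) to the bound you already have for $(M_{k,l})$ at $p=q_1$, so it carries no information about $q_1'$. Nor does the conclusion dualize: \eqref{eq:1} at $p$ is a bound for the map $f\mapsto(M_{k,l}S_{k+j}f)_{(k,l)\in\V_L}$ from $L^p$ into $L^p(\ell^2)$, and its dual statement is the inequality $\norm[\big]{\sum_{k,l}S_{k+j}^*M_{k,l}^*g_{k,l}}_{L^{p'}}\lesssim\norm[\big]{(\sum_{k,l}\abs{g_{k,l}}^2)^{1/2}}_{L^{p'}}$, not a square-function bound at $p'$; square-function estimates are not self-dual. (Your parenthetical claim that \eqref{eq:duo+rubio:off-diag} is self-dual ``being an $L^2$ bound'' is likewise false for general families --- consider $T_nf=\innerp{f}{e_n}e_1$ --- although it is true in the convolution setting via Plancherel, since adjoint multipliers have the same modulus.) The paper dualizes one step earlier, where duality does work: the conclusion of Lemma~\ref{lem:duo+rubio} is a bound for the \emph{diagonal} map $(g_{k,l})\mapsto(M_{k,l}g_{k,l})$ on $L^{q_\theta}(\ell^2)$, whose adjoint is the diagonal map of the adjoints on $L^{q_\theta'}(\ell^2)$, and for convolution operators this transfers back to the original family; together with \eqref{eq:4} at $q_\theta'$ this gives the ``stationary'' bound at $q_\theta'$, and since $\frac{1}{q_1'}=\frac{\beta}{q_\theta'}+\frac{1-\beta}{2}$ one reruns the same interpolation with the $L^2$ hypothesis to get \eqref{eq:1} at $p=q_1'$, then interpolates between $q_1$ and $q_1'$. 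Your proposal needs this (or an equivalent) argument in place of the one-line duality claim.
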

\begin{proof}
First we show \eqref{eq:1}.
In the case $q_{1}=2$ this is identical to the hypothesis \eqref{eq:duo+rubio:off-diag}, so suppose $q_{1}<2$.
Let $\theta$ and $q_{\theta} \in [q_{0},q_{1}]$ be as in Lemma~\ref{lem:duo+rubio}, then by that lemma and Littlewood--Paley inequality \eqref{eq:4} we obtain
\begin{equation}
\label{eq:2}
\begin{split}
\MoveEqLeft
\norm[\Big]{\big(\sum_{k\in\Z} \sum_{l=0}^{L-1} \abs{ M_{k,l} S_{k+j} f}^2\big)^{1/2}}_{L^{q_{\theta}}}\\
&\lesssim
\big(\sup_{(k, l)\in\V_L} \norm{M_{k,l}}_{L^{q_{0}}\to L^{q_{0}}}^{1-\theta}\big) \norm{M_{*, \V_L}}_{L^{q_{1}}\to L^{q_{1}}}^{\theta}
\norm[\Big]{\big(\sum_{k\in\Z} \sum_{l=0}^{L-1} \abs{ S_{k+j}f}^2\big)^{1/2}}_{L^{q_{\theta}}}\\
&\lesssim
L^{1/2} \big(\sup_{(k, l)\in\V_L} \norm{M_{k,l}}_{L^{q_{0}}\to L^{q_{0}}}^{1-\theta}\big) \norm{M_{*, \V_L}}_{L^{q_{1}}\to L^{q_{1}}}^{\theta}
\norm{f}_{L^{q_{\theta}}}.
\end{split}
\end{equation}
Since $q_{\theta} \leq q_{1} < 2$, there is a unique $\nu \in (0,1]$ such that $\frac{1}{q_{1}} = \frac{\nu}{q_{\theta}} + \frac{1-\nu}{2}$.
Substituting the definition of $q_{\theta}$ we obtain $\frac{1}{q_{1}} = \frac{\nu\theta}{q_{1}} + \frac12$.
It follows that
\begin{align*}
1-\theta &= \frac{q_{0}}{2},&
\theta &= \frac{2-q_{0}}{2},&
\nu\theta &= \frac{2-q_{1}}{2},\\
\nu &= \frac{2-q_{1}}{2-q_{0}},&
\nu(1-\theta) &= \frac{2-q_{1}}{2-q_{0}} \frac{q_{0}}{2},&
1-\nu &= \frac{q_{1}-q_{0}}{2-q_{0}}.
\end{align*}
Interpolating \eqref{eq:2} with the hypothesis \eqref{eq:duo+rubio:off-diag} gives the claim \eqref{eq:1} for $p=q_1$.

If $M_{k,l}$ are convolution operators, then by duality the first inequality in \eqref{eq:2} also holds with $q_{\theta}$ replaced by $q_{\theta}'$.
Also, $\frac{1}{q_{1}'} = \frac{\nu}{q_{\theta}'} + \frac{1-\nu}{2}$, so the same argument as before also works for $p=q_{1}'$.
The conclusion for $q_{1} < p < q_{1}'$ follows by complex interpolation.
\end{proof}

\subsection{Long jumps for positive operators}
Suppose now we have a sequence of positive linear operators $(A_k)_{k\in\Z}$ and an approximating family of linear operators $(P_k)_{k\in\Z}$ both acting on $L^1(X)+ L^{\infty}(X)$ such that for every $1<p<\infty$ the maximal lattice operator
\begin{align*}
P_{*}f := \sup_{k\in\Z} \sup_{\abs{g} \leq \abs{f}} \abs{P_{k}g},
\end{align*}
satisfies the maximal estimate
\begin{align}
\label{eq:5}
\norm{P_{*}}_{L^p\to L^p}\lesssim 1.
\end{align}

Theorem~\ref{thm:duo+rubio:max} will be based on a variant of bootstrap argument discussed in the context of differentiation in lacunary directions in \cite{MR0466470}.
These ideas were also used to provide $L^p$ bounds for maximal Radon transforms in \cite{MR837527}.
It was observed by Christ that the argument from \cite{MR0466470} can be formulated as an abstract principle, which was useful in many situations \cite{MR949003} and also in the context of dimension-free estimates \cite{MR828824}.
\begin{theorem}
\label{thm:duo+rubio:max}
Assume that $(X,\calB,\frakm)$ is a $\sigma$-finite measure space endowed with a sequence of linear operators $(S_j)_{j\in\Z}$ satisfying \eqref{eq:3} and \eqref{eq:4}. Given parameters $1\leq q_{0}<q_{1}\leq 2$, let $(A_{k})_{k\in\Z}$ be a sequence of positive linear operators such that $\sup_{k\in\Z}\norm{A_{k}}_{L^{q_{0}}\to L^{q_{0}}} \lesssim 1$.
Suppose that the maximal function $P_{*}$ satisfies \eqref{eq:5} with $p=q_{1}$ and
\begin{align}
\label{eq:6}
\norm[\Big]{ \bigl( \sum_{k\in\Z} \abs{ (A_k-P_k)S_{k+j} f }^{2} \bigr)^{1/2} }_{L^2}
\leq
a_{j} \norm{f}_{L^2}, \qquad f\in L^2(X)
\end{align}
for some positive numbers $(a_{j})_{j\in\Z}$ satisfying $\bfa := \sum_{j\in\Z} a_{j}^{\frac{q_{1}-q_{0}}{2-q_{0}}}<\infty$.

Then for all $f\in L^p(X)$ with $p=q_1$ we have
\begin{align}
\label{eq:7}
\norm[\Big]{ \bigl( \sum_{k\in\Z} \abs{ (A_k-P_k)f }^{2} \bigr)^{1/2} }_{L^p}
\lesssim (1+\bfa^{2/q_{1}})\norm{f}_{L^p}.
\end{align}
In particular
\begin{equation}
\label{eq:duo+rubio:max}
\norm{A_{*}}_{L^p \to L^p}
\lesssim
1+\bfa^{2/q_{1}}.
\end{equation}
If in addition we have the jump inequality
\begin{equation}
\label{eq:P-jump}
J^{p}_{2}((P_{k}f)_{k\in\Z} : X \to \C) \lesssim \norm{f}_{L^{p}},
\end{equation}
then also
\begin{equation}
\label{eq:duo+rubio:jump}
J^{p}_{2}((A_{k}f)_{k\in\Z} : X \to \C)
\lesssim
(1+\bfa^{2/q_{1}}) \norm{f}_{L^{p}}.
\end{equation}
If $A_{k}$ and $P_{k}$ are convolution operators on an abelian group $\GG$ all these implications also hold for $q_{1} \leq p \leq q_{1}'$, and we have the vector-valued estimate
\begin{equation}
\label{eq:long-vector-valued}
\norm[\Big]{ \bigl( \sum_{k\in\Z} \abs{A_{k} f_{k}}^{r} \bigr)^{1/r}}_{L^p}
\lesssim
\norm[\Big]{ \bigl( \sum_{k\in\Z} \abs{f_{k}}^{r} \bigr)^{1/r}}_{L^p}
\end{equation}
in the same range $q_{1} \leq p \leq q_{1}'$ for all $1 \leq r \leq \infty$.
\end{theorem}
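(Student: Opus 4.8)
The plan is to prove \eqref{eq:7} first, since everything else is a corollary of it (together with the hypotheses). The key is a bootstrap argument. Introduce the square function norm $\mathfrak{B} := \norm{(\sum_k \abs{(A_k - P_k)S_{k+j}f}^2)^{1/2} \cdot \text{(something)}}$ — more precisely, I would track the quantity
\[
\Phi := \norm[\Big]{\bigl(\sum_{k\in\Z}\abs{(A_k-P_k)f}^2\bigr)^{1/2}}_{L^{q_1}\to L^{q_1}}
\]
as the operator norm of $f\mapsto ((A_k-P_k)f)_k$ into $L^{q_1}(\ell^2)$, prove it is \emph{a priori} finite (this is where the qualitative boundedness of the $A_k$ and $P_k$ on $L^{q_1}$ is used, via \eqref{eq:5} and $\sup_k\norm{A_k}_{L^{q_0}\to L^{q_0}}\lesssim 1$ interpolated with the qualitative bound), and then derive an inequality of the form $\Phi \lesssim 1 + C\cdot\Phi^{1/2}$, which forces $\Phi \lesssim 1 + C^2$ with $C \sim \mathbf{a}^{1/q_1}$. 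To set up the self-improvement, decompose $A_k - P_k = \sum_{j\in\Z}(A_k-P_k)S_{k+j}$ using the resolution of identity \eqref{eq:3}, so that by the triangle inequality in $L^{q_1}(\ell^2)$,
\[
\Phi \le \sum_{j\in\Z}\norm[\Big]{\bigl(\sum_k\abs{(A_k-P_k)S_{k+j}f}^2\bigr)^{1/2}}_{L^{q_1}}.
\]

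\textbf{Estimating the $j$-th term.} For each fixed $j$, I would apply Lemma~\ref{lem:duo+rubio:square} with $L=1$ and $M_{k,0} := A_k - P_k$: hypothesis \eqref{eq:duo+rubio:off-diag} becomes exactly \eqref{eq:6} with constant $a_j$; the $L^{q_0}$ bound on $M_{k,0}$ follows from $\sup_k\norm{A_k}_{L^{q_0}\to L^{q_0}}\lesssim 1$ together with $\norm{P_k}_{L^{q_0}\to L^{q_0}}\lesssim\norm{P_*}_{L^{q_0}\to L^{q_0}}\lesssim 1$ from \eqref{eq:5}; and $\norm{M_{*,\V_1}}_{L^{q_1}\to L^{q_1}} \le \norm{A_{*,\text{lat}}}_{L^{q_1}\to L^{q_1}} + \norm{P_*}_{L^{q_1}\to L^{q_1}}$. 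The first of these last two is precisely (a piece of) what we are trying to bound: by the lattice inequality $\norm{A_{*,\text{lat}}f}_{L^{q_1}} \le \norm{(\sum_k\abs{A_k(\sup_m\abs{\,\cdot\,})}^2)^{1/2}}$-type estimate, or more directly since $\sup_k\abs{A_kg}\le\sup_k\abs{P_kg} + (\sum_k\abs{(A_k-P_k)g}^2)^{1/2}$ pointwise, one gets $\norm{M_{*,\V_1}}_{L^{q_1}\to L^{q_1}} \lesssim 1 + \Phi$. Feeding this into the conclusion \eqref{eq:1} of Lemma~\ref{lem:duo+rubio:square} (with $L=1$, so the $L^{1/2}$ factor is harmless) produces
\[
\norm[\Big]{\bigl(\sum_k\abs{(A_k-P_k)S_{k+j}f}^2\bigr)^{1/2}}_{L^{q_1}}
\lesssim (1+\Phi)^{\frac{2-q_1}{2}}\, a_j^{\frac{q_1-q_0}{2-q_0}}\,\norm{f}_{L^{q_1}}.
\]
Summing over $j$ and using $\mathbf{a}=\sum_j a_j^{(q_1-q_0)/(2-q_0)}<\infty$ gives $\Phi \lesssim \mathbf{a}(1+\Phi)^{(2-q_1)/2}$. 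Since $(2-q_1)/2 < 1$ (as $q_1 > 1$… actually $q_1\le 2$ suffices for $\le$, but strict needs $q_1>0$; in the borderline $q_1=2$ the exponent is $0$ and the estimate is immediate from \eqref{eq:6} with $\mathbf{a}=\sum_j a_j^{...}$, giving $\Phi\lesssim\mathbf{a}$ directly), Young's inequality absorbs the $\Phi$ on the right and yields $\Phi \lesssim 1 + \mathbf{a}^{2/q_1}$, which is \eqref{eq:7}.

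\textbf{The corollaries.} Inequality \eqref{eq:duo+rubio:max} is then immediate from the pointwise bound $A_{*}f \le P_*f + (\sum_k\abs{(A_k-P_k)f}^2)^{1/2}$ and \eqref{eq:5}. For \eqref{eq:duo+rubio:jump}: use the general fact (from \cite{arxiv:1808.04592}, or the elementary subadditivity of $\lambda N_\lambda^{1/2}$) that $J^p_2((A_kf)_k) \lesssim J^p_2((P_kf)_k) + \norm{(\sum_k\abs{(A_k-P_k)f}^2)^{1/2}}_{L^p}$, then invoke \eqref{eq:P-jump} and \eqref{eq:7}. For the convolution case, the extension of the range to $q_1\le p\le q_1'$ is built into Lemma~\ref{lem:duo+rubio:square} (the duality/interpolation step there), so rerunning the bootstrap verbatim in that lemma's extended range gives \eqref{eq:7}, hence \eqref{eq:duo+rubio:max} and \eqref{eq:duo+rubio:jump}, for all $q_1\le p\le q_1'$. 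Finally, the vector-valued estimate \eqref{eq:long-vector-valued}: for $r=\infty$ it is $\norm{A_{*,\text{lat}}}_{L^p\to L^p}\lesssim 1$, already known; for $r=q_0$ (if $\le p$) or rather by the positivity of $A_k$ one has the pointwise Fefferman--Stein-type bound, but cleanly I would interpolate — the $r=2$ case follows from \eqref{eq:7} plus $\norm{(\sum_k\abs{P_kf_k}^2)^{1/2}}\lesssim\norm{(\sum_k\abs{f_k}^2)^{1/2}}$ (a consequence of \eqref{eq:5} by the standard Fefferman--Stein argument for the positive-ish lattice maximal operator $P_*$), giving $\norm{(\sum_k\abs{A_kf_k}^2)^{1/2}}\lesssim\norm{(\sum_k\abs{f_k}^2)^{1/2}}$; combined with the $r=\infty$ endpoint and the trivial $r=1$ bound (from positivity and $\ell^1$-triangle inequality: $\sum_k\abs{A_kf_k}\le\sum_k A_k\abs{f_k}$, hmm this needs a bound on $\sum_k A_k$ — instead use $r=\infty$ and $r=2$ and interpolate the pair $(2,\infty)$ for $r\in[2,\infty]$, and for $r\in[1,2]$ use duality of the $r=2$ to $r=\infty$ estimate), one interpolates to all $1\le r\le\infty$.

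\textbf{Main obstacle.} The delicate point is the \emph{a priori finiteness} of $\Phi$: the bootstrap inequality $\Phi\lesssim 1+\mathbf{a}\,\Phi^{(2-q_1)/2}$ is only useful once we know $\Phi<\infty$ to begin with, since otherwise it is vacuous. This is why the qualitative $L^{q_1}$-boundedness assumptions on the $A_k$ and $P_k$ are essential — but they only give a \emph{finite}, not uniform, bound, so one must be careful that the implicit constants in the final estimate genuinely do not depend on them (as asserted in Lemma~\ref{lem:duo+rubio:square}); the standard device is to truncate the sum over $k$ to a finite range, run the argument to get a bound uniform in the truncation, and pass to the limit by monotone convergence. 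The second mildly technical point is justifying $\norm{M_{*,\V_1}}_{L^{q_1}\to L^{q_1}}\lesssim 1+\Phi$ with the \emph{lattice} maximal operator (allowing $\sup_{\abs{g}\le\abs{f}}$), which for $P_*$ is exactly \eqref{eq:5} but for the $A_k$-part requires the pointwise domination $\sup_{\abs g\le\abs f}\abs{(A_k-P_k)g}$ — here I would simply bound the $A_k$-lattice maximal by $A_*+P_*$ using positivity of $A_k$ (so $\sup_{\abs g\le\abs f}\abs{A_kg}\le A_k\abs f \le A_*\abs f$ pointwise) and then reuse $A_*\le P_*+(\sum_k\abs{(A_k-P_k)\cdot}^2)^{1/2}$.
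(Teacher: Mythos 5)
Your treatment of \eqref{eq:7}, \eqref{eq:duo+rubio:max}, \eqref{eq:duo+rubio:jump}, and of the extension to $q_{1}\leq p\leq q_{1}'$ for convolution operators is correct and is essentially the paper's own argument: the same decomposition $A_{k}-P_{k}=\sum_{j}(A_{k}-P_{k})S_{k+j}$ via \eqref{eq:3}, the same application of Lemma~\ref{lem:duo+rubio:square} with $L=1$, the same pointwise domination $A_{*}f\leq P_{*}f+\bigl(\sum_{k}\abs{(A_{k}-P_{k})f}^{2}\bigr)^{1/2}$ used to close the bootstrap $\Phi\lesssim 1+\bfa(1+\Phi)^{(2-q_{1})/2}$, and the same truncation to $\abs{k}\leq K$ plus monotone convergence to secure the a priori finiteness.

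The genuine gap is in the vector-valued estimate \eqref{eq:long-vector-valued}. Your proposed second endpoint, the $r=2$ case, does not follow from \eqref{eq:7}: that inequality is a square-function bound in which the \emph{same} $f$ appears in every component, whereas \eqref{eq:long-vector-valued} with $r=2$ requires independent inputs $f_{k}$; likewise the Fefferman--Stein type bound $\norm[\big]{(\sum_{k}\abs{P_{k}f_{k}}^{2})^{1/2}}_{L^{p}}\lesssim\norm[\big]{(\sum_{k}\abs{f_{k}}^{2})^{1/2}}_{L^{p}}$ at $p=q_{1}$ is not a consequence of \eqref{eq:5} in this abstract lattice setting (Lemma~\ref{lem:duo+rubio} only yields such a bound at the intermediate exponent $q_{\theta}$, not at $q_{1}$). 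With only the $r=\infty$ endpoint in hand, interpolation cannot reach any $r<\infty$. The paper's route is the correct fix, and you discarded it for the wrong reason: the $r=1$ case does not require bounding $\sum_{k}A_{k}$; it is obtained by duality from the $r=\infty$ estimate (the adjoints are again positive convolution operators and $p'$ stays in the self-dual range $[q_{1},q_{1}']$), and then $1<r<\infty$ follows by complex interpolation between the $\ell^{1}$ and $\ell^{\infty}$ endpoints at fixed $p$. A minor further gloss, which the paper shares: applying Lemma~\ref{lem:duo+rubio:square} needs $\sup_{k}\norm{A_{k}-P_{k}}_{L^{q_{0}}\to L^{q_{0}}}\lesssim 1$, and your appeal to \eqref{eq:5} for the $P_{k}$ part is at exponent $q_{0}$, which lies outside the stated hypothesis when $q_{0}=1$; the uniform $L^{q_{0}}$ bound for $P_{k}$ is an implicit assumption verified in the applications.
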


A few remarks concerning the assumptions in Theorem~\ref{thm:duo+rubio:max} are in order.
In applications it is usually not difficult to verify the assumption \eqref{eq:6}.
For general operators the most reasonable and efficient way is to apply $TT^{*}$ methods.
However, for convolution operators on $\GG$ assumption \eqref{eq:6} can be verified using Fourier transform methods, which may be simpler than $TT^{*}$ methods.
Let us explain the second approach more precisely when $\GG=\R^d$.
We first have to fix some terminology.

Let $A$ be a $d\times d$ real matrix whose eigenvalues have positive real
part. We set
\begin{align}
\label{eq:9}
t^{A}:=\exp(A\log t), \quad \text{for}\quad t>0.
\end{align}

Let $\frakq$ be a smooth $A$-homogeneous quasi-norm on $\R^{d}$, that is, $\frakq:\R^{d}\to [0,\infty)$ is a continuous function, smooth on $\R^d\setminus\Set{0}$, and such that
\begin{enumerate}
\item $\frakq(x)=0 \iff x=0$;
\item there is $C\ge1$ such that for all $x,y\in\R^{d}$ we have
$\frakq(x+y)\le C(\frakq(x)+\frakq(y))$;
\item $\frakq(t^{A}x) = t\frakq(x)$ for all $t>0$ and $x\in\R^{d}$.
\end{enumerate}

Let also $\frakq_{*}$ be a smooth (away from $0$) $A^{*}$-homogeneous
quasi-norm, where $A^{*}$ is the adjoint matrix to $A$. We only have to find a sequence of Littlewood--Paley projections associated with the quasi-norm $\frakq_{*}$. For this purpose let $\phi_0:[0, \infty)\to [0, \infty)$ be a smooth function such that $0\le \phi_0\le \one_{[1/2, 2]}$ and its dilates $\phi_j(x):=\phi_0(2^jx)$ satisfy
\begin{align}
\label{eq:35}
\sum_{j\in\Z}\phi_j^2=\one_{(0, \infty)}.
\end{align}
For each $j\in\Z$ we define the Littlewood--Paley operator $\tilde{S}_j$ such that $\widehat{\tilde{S}_{j}f} = \psi_{j} \widehat{f}$ corresponds to a smooth function $\psi_{j}(\xi) := \phi_{j}(\frakq_{*}(\xi))$ on $\R^{d}$. By \eqref{eq:35} we see that \eqref{eq:3} holds for $S_j=\tilde{S}_j^2$. Moreover, by \cite[Theorem II.1.5]{MR0440268} we obtain the Littlewood--Paley inequality \eqref{eq:4} for the operators $S_j$ and $\tilde{S}_j$.

If $(\Phi_t: t>0)$ is a family of Schwartz functions such that $\widehat{\Phi}_t(\xi)=\widehat{\Phi}(t\frakq_*(\xi))$, where
$\Phi$ is a non-negative Schwartz function on $\R^d$ with integral one, then by \cite[Theorem 1.1]{MR2434308} we know that for every $1<p<\infty$ we have
\begin{align}
\label{eq:36}
J^{p}_{2}((\Phi_{2^k}*f)_{k\in\Z} : \R^d \to \C)
\lesssim
\norm{f}_{L^{p}}, \qquad f\in L^p(\R^d).
\end{align}
The maximal version of inequality \eqref{eq:36} has been known for a long time and follows from the Hardy--Littlewood maximal theorem \cite{MR1232192}. Hence taking $P_kf=\Phi_{2^k}*f$ for $k\in\Z$, we may assume that \eqref{eq:P-jump} is verified.

Suppose now we have a family $(A_k)_{k\in\Z}$ of convolution operators $A_kf=\mu_{2^k}*f$ corresponding to a family of probability measures $(\mu_{t}: t>0)$ on $\R^d$ such that
\begin{align}
\abs{\widehat\mu_{t}(\xi)-\widehat\mu_{t}(0)}
&\leq
\omega(t\frakq_{*}(\xi))
&\text{if } \quad t\frakq_*(\xi)\le1, \label{eq:42}\\
\abs{\widehat\mu_{t}(\xi)}
&\leq
\omega((t\frakq_{*}(\xi))^{-1})
& \text{if } \quad t\frakq_*(\xi)\ge1,\label{eq:43}
\end{align}
for some modulus of continuity $\omega$. 

Theorem~\ref{thm:duo+rubio:max}, taking into account all the facts mentioned above, yields
\begin{align}
\label{eq:16}
J^{p}_{2}((\mu_{2^k}*f)_{k\in\Z} : \R^d \to \C)
\lesssim
\norm{f}_{L^{p}}, \qquad f\in L^p(\R^d)
\end{align}
for $p=q_1$ and $q_0=1$ as long as $\bfa = \sum_{j\in\Z}\omega(2^{-\abs{j}})^{\frac{q_{1}-q_{0}}{2-q_{0}}}<\infty$, since \eqref{eq:6} can be easily verified with $a_j=\omega(2^{-\abs{j}})$ using \eqref{eq:42}, \eqref{eq:43} and the properties of ${S}_j$ and $\Phi$.

\begin{proof}[Proof of Theorem~\ref{thm:duo+rubio:max}]
We begin with the proof of \eqref{eq:7}. If $q_1=2$ then we use \eqref{eq:3} and \eqref{eq:6} and we are done. We now assume that $q_1<2$. By the monotone convergence theorem it suffices to consider only finitely many $M_k:=A_{k}-P_{k}$'s in \eqref{eq:7}, let us say those with $\abs{k} \leq K$.
Restrict all summations and suprema to $\abs{k}\leq K$ and let $B$ be the smallest implicit constant for which \eqref{eq:7} holds with $p=q_1$.
In view of the qualitative boundedness hypothesis we obtain $B<\infty$, but the bound may depend on $K$.
Our aim is to show that $B\lesssim 1+\bfa^{2/q_1}$. There is nothing to do if $B\lesssim 1$. Therefore, we will assume that $B\gtrsim1$, so by \eqref{eq:3}, \eqref{eq:5} and \eqref{eq:1} with $L=1$ and $M_{k, 0}:=M_k$, we obtain
\[
\norm[\Big]{\big(\sum_{\abs{k} \leq K} \abs{ M_{k} f}^2\big)^{1/2}}_{L^p}
\le
\sum_{j\in\Z}\norm[\Big]{\big(\sum_{\abs{k}\leq K} \abs{ M_{k} S_{k+j}f}^2\big)^{1/2}}_{L^p}
\lesssim
\big(1+\norm{M_{*}}_{L^p\to L^p}^{\frac{2-q_{1}}{2}} \bfa\big)\norm{f}_{L^p}.
\]
By positivity we have $\abs{A_{*}f} \leq \sup_{\abs{k}\leq K} A_{k} \abs{f}$ and consequently, we obtain
\begin{align}
\label{eq:12}
\abs{A_{*} f}
\leq
\sup_{\abs{k}\leq K} A_{k} \abs{f}
\leq
\sup_{\abs{k}\leq K} P_{k} \abs{f} + \Big( \sum_{\abs{k}\leq K} \abs{M_{k} \abs{f}}^{2} \Big)^{1/2}.
\end{align}
By \eqref{eq:12} and \eqref{eq:5} we get
\[
\norm{M_{*}}_{L^p\to L^p}\le \norm{P_{*}}_{L^p\to L^p}+\norm{A_{*}}_{L^p\to L^p}\le 2\norm{P_{*}}_{L^p\to L^p}+B\lesssim 1+B.
\]
Taking into account these inequalities we have
\begin{align*}
\norm[\Big]{\big(\sum_{\abs{k} \leq K} \abs{ (A_{k}-P_{k}) f}^2\big)^{1/2}}_{L^p}
=
\norm[\Big]{\big(\sum_{\abs{k} \leq K} \abs{ M_{k} f}^2\big)^{1/2}}_{L^p}
\lesssim
\big(1+\bfa(1+B)^{\frac{2-q_{1}}{2}}\big)\norm{f}_{L^p}.
\end{align*}
Taking the supremum over $f$ gives
\[
B
\lesssim
1 + \bfa (1+B)^{\frac{2-q_{1}}{2}} \lesssim
(1+\bfa) B^{\frac{2-q_{1}}{2}},
\]
since we have assumed $B\gtrsim 1$,
and the conclusion \eqref{eq:7} follows.

Once \eqref{eq:7} is proven then in view of \eqref{eq:12} we immediately obtain \eqref{eq:duo+rubio:max}. In a similar way, if \eqref{eq:P-jump} holds, we deduce \eqref{eq:duo+rubio:jump} from \eqref{eq:7}. Indeed,
\begin{align*}
J^{p}_{2}((A_{k}f)_{k\in\Z})
&\lesssim
J^{p}_{2}((P_{k}f)_{k\in\Z}) + J^{p}_{2}((M_{k}f)_{k\in\Z})\\
&\lesssim
\norm{f}_{L^p} + \norm[\big]{V^{2}(M_{k}f : k\in\Z)}_{L^{p}}\\
&\lesssim
\norm{f}_{L^p} + \norm[\Big]{ \big( \sum_{k\in\Z} \abs{M_{k}f}^{2} \big)^{1/2} }_{L^{p}}.
\end{align*}

In the case of convolution operators we can run the above proof of \eqref{eq:7} with $p=q_{1}'$, since in this case Lemma~\ref{lem:duo+rubio:square} tells that \eqref{eq:1} also holds with $p=q_{1}'$.
Once the estimate \eqref{eq:7} is known for $p=q_{1},q_{1}'$, by interpolation we extend it to $q_{1} \leq p \leq q_{1}'$, and all other inequalities follow as before.
Finally, the vector-valued estimate \eqref{eq:long-vector-valued} with $r=\infty$ is equivalent to the maximal estimate by positivity, with $r=1$ it follows by duality, and with $1<r<\infty$ by complex interpolation.
\end{proof}

\subsection{Long jumps for non-positive operators}
We now drop the positivity assumption and we will be working with general operators $(B_{k})_{k\in\Z}$ acting on $L^1(X)+ L^{\infty}(X)$. This will require some knowledge about the maximal lattice operator $B_{*}$ defined in \eqref{eq:22} and about the sum of $B_k$'s over $k\in\Z$.
No bootstrap argument seems to be available for non-positive operators and therefore additional assumptions like \eqref{eq:23} and \eqref{eq:24} will be indispensable.
The proof of Theorem~\ref{thm:duo+rubio:jump} is based on the ideas from \cite{MR837527}.

\begin{theorem}
\label{thm:duo+rubio:jump}
Assume that $(X,\calB,\frakm)$ is a $\sigma$-finite measure space endowed with a sequence of linear operators $(S_j)_{j\in\Z}$ satisfying \eqref{eq:3} and \eqref{eq:4}. Let $1\leq q_{0}<q_{1}\leq 2$ and let $(B_{k})_{k\in\Z}$ be a sequence of linear operators commuting with the sequence $({S}_j)_{j\in\Z}$ such that $\sup_{k\in\Z}\norm{B_{k}}_{L^{q_{0}}\to L^{q_{0}}} \lesssim 1$. Suppose that the maximal lattice operator
\begin{align}
\label{eq:22}
B_{*}f := \sup_{k\in\Z} \sup_{\abs{g} \leq \abs{f}} \abs{B_{k}g},
\end{align}
satisfies
\begin{align}
\label{eq:23}
\norm{B_{*}}_{L^{q_1}\to L^{q_1}}\lesssim 1.
\end{align}
We also assume
\begin{align}
\label{eq:14}
\norm[\Big]{ \bigl( \sum_{k\in\Z} \abs{ B_kS_{k+j} f }^{2} \bigr)^{1/2} }_{L^2}
\leq
a_{j} \norm{f}_{L^2}, \qquad f\in L^2(X)
\end{align}
for some positive numbers $(a_{j})_{j\in\Z}$.
\begin{enumerate}
\item Suppose that $(B_{k})_{k\in\Z}$ additionally satisfies
\begin{align}
\label{eq:24}
\norm[\Big]{\sum_{k\in\Z}B_k}_{L^{q_1}\to L^{q_1}}\lesssim 1.
\end{align}
Let $P_k:=\sum_{j>k}S_j$ and assume that the jump inequality \eqref{eq:P-jump} holds for the sequence $(P_k)_{k\in\Z}$ with $p=q_1$.
Then for all $f\in L^p(X)$ with $p=q_1$ we have
\begin{align}
\label{eq:15}
\begin{split}
\MoveEqLeft
\qquad J^{p}_{2}\Big(\big(\sum_{j\ge k}B_{j}f\big)_{k\in\Z} : X \to \C\Big)\\
&\lesssim \Big( \norm[\Big]{\sum_{k\in\Z}B_k}_{L^{q_1}\to L^{q_1}}
+
\big(\sup_{k\in\Z} \norm{B_{k}}_{L^{q_{0}}\to L^{q_{0}}}^{\frac{2-q_{1}}{2-q_{0}} \frac{q_{0}}{2}}\big) \norm{B_{*}}_{L^{q_{1}}\to L^{q_{1}}}^{\frac{2-q_{1}}{2}} \tilde{\bfa}\Big)\norm{f}_{L^{p}},
\end{split}
\end{align}
where $\tilde{\bfa} := \sum_{j\in\Z} (\abs{j}+1) a_{j}^{\frac{q_{1}-q_{0}}{2-q_{0}}}<\infty$.
\item Suppose that there is a sequence of self-adjoint linear operators $(\tilde{S}_j)_{j\in\Z}$ such that $S_j=\tilde{S}_j^2$ for every $j\in\Z$ and satisfying \eqref{eq:4} and \eqref{eq:14} with $\tilde{S}_{k+j}$ in place of $S_{k+j}$.
Then for every sequence $(\varepsilon_{k})_{k\in\Z}$ bounded by $1$ and for all $f\in L^p(X)$ with $p=q_1$ we have
\begin{align}
\label{eq:17}
\norm[\Big]{ \sum_{k\in\Z} \varepsilon_k B_kf }_{L^p}
\lesssim \big(\sup_{k\in\Z} \norm{B_{k}}_{L^{q_{0}}\to L^{q_{0}}}^{\frac{2-q_{1}}{2-q_{0}} \frac{q_{0}}{2}}\big) \norm{B_{*}}_{L^{q_{1}}\to L^{q_{1}}}^{\frac{2-q_{1}}{2}} {\bfa}\norm{f}_{L^p},
\end{align}
where ${\bfa}$ is as in Theorem~\ref{thm:duo+rubio:max}.

\end{enumerate}

In the case of convolution operators on an abelian group $\GG$ all these implications also hold for $q_{1} \leq p \leq q_{1}'$.
\end{theorem}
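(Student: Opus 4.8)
The plan is to handle the two parts separately, using the telescoping decomposition \eqref{eq:30} together with Lemma~\ref{lem:duo+rubio:square} (with $L=1$ and $M_{k,0}:=B_k$) as the workhorse. For part (1), the starting point is the identity $\sum_{j\ge k}B_j f = (\sum_{j\in\Z}B_j)f - \sum_{j<k}B_j f$, so that after splitting $\id = \sum_i S_i$ in the first factor we may write $\sum_{j\ge k} B_j = B S_{\ge k}$-type pieces plus errors, where $B:=\sum_j B_j$. More precisely, since $P_k=\sum_{j>k}S_j$, we use $\sum_{j\ge k}B_j = B\,(\id - P_{k-1}) + \big(\sum_{j\ge k}B_j\big)(\id-\id)$; the cleanest route is: write $\sum_{j\ge k}B_j f = B P_{k-1}f + \big(\sum_{j\ge k}B_j - B P_{k-1}\big)f$. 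The first term, as a jump seminorm in $k$, is controlled by $\|B\|_{L^{q_1}\to L^{q_1}}$ times the jump seminorm of $(P_{k-1}f)_k$, which by the hypothesis \eqref{eq:P-jump} is $\lesssim \|f\|_{L^{q_1}}$; and since $\|B\|_{L^{q_1}\to L^{q_1}}$ appears explicitly on the right-hand side of \eqref{eq:15}, this is exactly the first summand we want.

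The second (error) term is where the square-function machinery enters. We have $\sum_{j\ge k}B_j - B P_{k-1} = \sum_{j\ge k}B_j - B\sum_{i\ge k}S_i$; commutativity of $B_j$ with the $S_i$ lets us expand $B=\sum_m B_m$ and $B_j = B_j\sum_i S_i$, so the error becomes a double sum $\sum_{j,i}\epsilon_{j,i,k}B_j S_i$ over the ``wrong'' sign pattern, and collecting by $i-j$ one finds the error at level $k$ equals $\sum_{j\in\Z}(\text{signed})\,B_?\,S_{?+?}$ with the count of terms at offset $j$ bounded by $|j|+1$ — this is precisely the source of the weight $(|j|+1)$ in $\tilde\bfa$. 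Bounding the jump seminorm of this error by the $V^2$ square function and then by $\big(\sum_k\sum_{\text{offsets}}|B_? S_{?}f|^2\big)^{1/2}$, Lemma~\ref{lem:duo+rubio:square} gives, summing over offsets $j$ with the weight $|j|+1$, exactly the second summand of \eqref{eq:15} with the stated exponents on $\sup_k\|B_k\|_{L^{q_0}\to L^{q_0}}$ and $\|B_*\|_{L^{q_1}\to L^{q_1}}$.

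For part (2), with $S_j=\tilde S_j^2$ and $\tilde S_j$ self-adjoint, the point is that for a bounded sign sequence $(\epsilon_k)$ one writes $\sum_k\epsilon_k B_k f = \sum_k\epsilon_k B_k\big(\sum_i\tilde S_i^2\big)f$ and uses self-adjointness to pair one $\tilde S$ with the test function on each side: dualizing, $\langle\sum_k\epsilon_k B_k f, g\rangle = \sum_{k,i}\epsilon_k\langle B_k\tilde S_{i}f,\tilde S_{i}g\rangle$ (using commutativity and self-adjointness), then Cauchy–Schwarz in $(k,i)$ reduces matters to the two square functions $\big(\sum_{k,i}|B_k\tilde S_{i}f|^2\big)^{1/2}$ and $\big(\sum_{k,i}|\tilde S_{i}g|^2\big)^{1/2}$; the latter is $\lesssim\|g\|_{L^{q_1'}}$ by Littlewood--Paley \eqref{eq:4} for $\tilde S$, and the former is handled by collecting in the offset $i-k$ and applying Lemma~\ref{lem:duo+rubio:square} (with $L=1$), summing $\sum_j a_j^{(q_1-q_0)/(2-q_0)}=\bfa$ — note no $|j|+1$ weight is needed here because the sign sequence is arbitrary but the offsets are not duplicated. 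This yields \eqref{eq:17} for $p=q_1$. The extension to $q_1\le p\le q_1'$ in the convolution case is then immediate from the corresponding clause of Lemma~\ref{lem:duo+rubio:square} together with duality, exactly as at the end of the proof of Theorem~\ref{thm:duo+rubio:max}.

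The main obstacle I expect is bookkeeping in part (1): correctly identifying which offsets appear in the telescoped error $\sum_{j\ge k}B_j - B\sum_{i\ge k}S_i$ and verifying that the multiplicity of the offset-$j$ contribution is $O(|j|+1)$ uniformly in $k$, so that Lemma~\ref{lem:duo+rubio:square} can be applied offset-by-offset and summed against the weight to produce $\tilde\bfa<\infty$. The analytic content is entirely in Lemma~\ref{lem:duo+rubio:square}; the rest is algebraic rearrangement plus the long/short-jump philosophy already set up in Section~\ref{sec:gener-prel}.
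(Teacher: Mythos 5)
Your overall route is the same as the paper's: split $\sum_{j\ge k}B_j$ into a main term in which the Littlewood--Paley tail $P$ acts together with the full sum $\sum_{j}B_j$, plus error terms carrying the ``wrong'' index pattern; dominate the jump seminorm of the errors by the $\ell^2(k)$ square function; reindex by the offset, whose multiplicity is $O(\abs{j}+1)$ (this is indeed where the weight in $\tilde\bfa$ comes from, entering once through Cauchy--Schwarz in the inner sum and once through the overlap in $k$); and apply Lemma~\ref{lem:duo+rubio:square} with $L=1$, $M_{k,0}=B_k$. Part (2) as you set it up is the dual formulation of the paper's argument via $S_j=\tilde S_j^2$ and the dual Littlewood--Paley inequality. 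However, two steps are not valid as literally written. In part (1) you assert that $J^{p}_{2}\big((BP_{k-1}f)_{k\in\Z}\big)$ is bounded by $\norm{B}_{L^{q_1}\to L^{q_1}}$ times $J^{p}_{2}\big((P_{k-1}f)_{k\in\Z}\big)$. There is no such inequality: $N_\lambda$ is a pointwise functional of the whole sequence, and a scalar $L^p$-bounded operator cannot be pulled out of the jump quasi-seminorm. The repair is immediate and is what the paper does by writing the main term as $P_k\sum_{j\in\Z}B_j$: use the commutation hypothesis to rewrite $BP_{k-1}f=P_{k-1}(Bf)$ and apply \eqref{eq:P-jump} to the single function $Bf$, whose $L^{q_1}$ norm is controlled by \eqref{eq:24}.

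In part (2), your ``Cauchy--Schwarz in $(k,i)$'' produces the dual factor $\big(\sum_{k,i}\abs{\tilde S_i g}^2\big)^{1/2}$, whose summand does not depend on $k$, so this quantity is infinite and the Littlewood--Paley inequality \eqref{eq:4} cannot salvage it. You must first decompose into offsets $j=i-k$ (i.e.\ use \eqref{eq:3} and the triangle inequality over $j$) and only then pair and apply Cauchy--Schwarz in $k$ for each fixed $j$, so that the $g$-side factor is the genuine square function $\big(\sum_k\abs{\tilde S_{k+j}g}^2\big)^{1/2}$; equivalently, apply the dual Littlewood--Paley bound \eqref{eq:20} offset by offset as the paper does, then Lemma~\ref{lem:duo+rubio:square} and summation over $j$ yield $\bfa$. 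With these two repairs, both of which use only ingredients you already invoke (commutativity and the offset decomposition), your argument coincides with the paper's proof, including the convolution-case extension to $q_1\le p\le q_1'$.
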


In applications in harmonic analysis we will take $B_k=T_{2^{k}}-T_{2^{k+1}}$ for $k\in\Z$, where $T_t$ is a truncated singular integral operator of convolution type, see \eqref{eq:30}. This class of operators motivates, to a large extent, the assumptions in Theorem~\ref{thm:duo+rubio:jump}. In many cases they can be verified if we manage to find positive operators $A_k$ such that $\abs{B_kf} \lesssim A_k\abs{f}$ for every $k\in\Z$ and $f\in L^{1}(X) + L^{\infty}(X)$. In practice, $A_k$ is an averaging operator. We shall illustrate this more precisely by appealing to the discussion after Theorem~\ref{thm:duo+rubio:max}.

Suppose that $(B_k)_{k\in\Z}$ is a family of convolution operators $B_kf=\sigma_{2^k}*f$ corresponding to a family of finite measures $(\sigma_{t}: t>0)$ on $\R^d$ such that $\sup_{t>0}\norm{\sigma_t}<\infty$ and for every $k\in\Z$ and $t\in[2^k, 2^{k+1}]$ we have
\begin{align}
\abs{\widehat\sigma_{t}(\xi)}
&\leq
\omega(2^k\frakq_{*}(\xi))
&\text{if } \quad 2^k\frakq_*(\xi)\le1, \label{eq:50}\\
\abs{\widehat\sigma_{t}(\xi)}
&\leq
\omega((2^k\frakq_{*}(\xi))^{-1})
& \text{if } \quad 2^k\frakq_*(\xi)\ge1,\label{eq:51}
\end{align}
for some modulus of continuity $\omega$. Additionally, we assume that $\abs{\sigma_{2^k}} \lesssim \mu_{2^k}$ for some family of finite positive measures $(\mu_{t}: t>0)$ on $\R^d$ such that $\sup_{t>0}\norm{\mu_t}<\infty$ and satisfying \eqref{eq:42} and \eqref{eq:43}. In view of these assumptions and Theorem~\ref{thm:duo+rubio:max} we see that condition \eqref{eq:23} holds, since $\abs{B_kf} \lesssim A_k\abs{f}$, where $A_kf=\mu_{2^k}*f$.
Therefore,
\begin{align*}
\norm[\Big]{ \sum_{k\in\Z}B_kf }_{L^p}
\lesssim {\bfa}\norm{f}_{L^p},
\end{align*}
implies \eqref{eq:24} with $p=q_1$ and $q_0=1$, provided that $\bfa = \sum_{j\in\Z} \omega(2^{-\abs{j}})^{\frac{q_{1}-q_{0}}{2-q_{0}}}<\infty$, since \eqref{eq:14} can be verified with $a_j=\omega(2^{-\abs{j}})$ using \eqref{eq:50}, \eqref{eq:51} and the properties of $\tilde{S}_j$ associated with \eqref{eq:35}. Having proven \eqref{eq:23} and \eqref{eq:24} we see that \eqref{eq:15} holds for the operators $B_kf=\sigma_{2^k}*f$
with $p=q_1$ and $q_0=1$ as long as $\tilde{\bfa} = \sum_{j\in\Z} (\abs{j}+1)\omega(2^{-\abs{j}})^{\frac{q_{1}-q_{0}}{2-q_{0}}}<\infty$.

\begin{proof}[Proof of Theorem~\ref{thm:duo+rubio:jump}]
In order to prove inequality \eqref{eq:15} we employ the following decomposition
\begin{align}
\label{eq:19}
\sum_{j\geq k} B_j
=
P_k \sum_{j\in\Z} B_j
-
\sum_{l>0}\sum_{j<0} S_{{k+l}} B_{{k+j}}
+
\sum_{l\le0}\sum_{j\ge0} S_{{k+l}} B_{{k+j}}
\end{align}
(cf.\ \cite[p.\ 548]{MR837527}).
The jump inequality corresponding to the first term on the right-hand side in \eqref{eq:19} is bounded on $L^{p}(X)$ with $p=q_1$,
due to \eqref{eq:P-jump}, and \eqref{eq:24}, which ensures boundedness of the operator $\sum_{j\in\Z} B_{j}$.

The estimates for the second and the third term are similar and we only consider the last term.
We take the $\ell^{2}$ norm with respect to the parameter $k$ and estimate
\begin{align*}
\MoveEqLeft
J^{p}_{2}\Big(\big(\sum_{l\le0}\sum_{j\ge0} B_{{k+j}}S_{k+l}f\big)_{k\in\Z} : X \to \C\Big)\\
&\le\norm[\Big]{\big(\sum_{k\in\Z}\abs{\sum_{l\le0}\sum_{j\ge0} B_{{k+j}}S_{k+l}f}^2\big)^{1/2}}_{L^p}\\
&=
\norm[\Big]{\big(\sum_{k\in\Z}\abs{\sum_{m\geq 0} \sum_{n=k-m}^{k} B_{{n+m}}S_{n} f}^2\big)^{1/2}}_{L^p}\\
&\leq
\sum_{m\geq 0} \norm[\Big]{\big(\sum_{k\in\Z}\abs{\sum_{n=k-m}^{k} B_{{n+m}}S_{n} f}^2\big)^{1/2}}_{L^p}
&&\text{by triangle inequality}\\
&\leq
\sum_{m\geq 0} (m+1)^{1/2} \norm[\Big]{\big(\sum_{k\in\Z} \sum_{n=k-m}^{k} \abs{ B_{{n+m}}S_{n}f}^2\big)^{1/2}}_{L^p}
&&\text{by H\"older's inequality}\\
&=
\sum_{m\geq 0} (m+1) \norm[\Big]{\big(\sum_{n\in\Z} \abs{B_{{n+m}}S_{n}f}^2\big)^{1/2}}_{L^p}.
\end{align*}
By \eqref{eq:1}, with $L=1$ and $M_{k, 0}:=B_k$, we obtain
\begin{align*}
\sum_{j\in\Z} (\abs{j}+1) \norm[\Big]{\big(\sum_{k\in\Z} \abs{ B_{k} S_{k+j} f}^2\big)^{1/2}}_{L^p}
\lesssim \big(\sup_{k\in\Z} \norm{B_{k}}_{L^{q_{0}}\to L^{q_{0}}}^{\frac{2-q_{1}}{2-q_{0}} \frac{q_{0}}{2}}\big) \norm{B_{*}}_{L^{q_{1}}\to L^{q_{1}}}^{\frac{2-q_{1}}{2}} \tilde{\bfa}
\norm{f}_{L^p}.
\end{align*}

To prove the second part observe that for a sequence of functions $(f_j)_{j\in\Z}$ in $L^p(X; \ell^2(\Z))$
we have the following inequality
\begin{align}
\label{eq:20}
\norm[\Big]{\sum_{j\in\Z}\tilde{S}_{j} f_j }_{L^p}
\lesssim
\norm[\Big]{\big(\sum_{j\in\Z} \abs{f_j}^{2} \big)^{1/2}}_{L^p},
\end{align}
which is the dual version of inequality \eqref{eq:4} for the sequence $(\tilde{S}_{j})_{j\in\Z}$. To prove \eqref{eq:17} we will use \eqref{eq:3} and \eqref{eq:20}. Indeed,
\begin{align*}
\norm[\Big]{\sum_{k\in\Z} \varepsilon_{k} B_{k} f}_{L^p}
&\le \sum_{j\in\Z}\norm[\Big]{\sum_{k\in\Z} \varepsilon_{k} B_{k} S_{k+j} f}_{L^p} && \text{by \eqref{eq:3}}\\
&=
\sum_{j\in\Z}\norm[\Big]{\sum_{k\in\Z} \tilde{S}_{k+j}(\varepsilon_{k}B_{k}\tilde{S}_{k+j} f)}_{L^p}&& \text{since $S_j=\tilde{S}_j^2$}\\
&\lesssim\sum_{j\in\Z}\norm[\Big]{\big(\sum_{k\in\Z} \abs{B_{k}\tilde{S}_{k+j} f}^2\big)^{1/2}}_{L^p}&& \text{by \eqref{eq:20}}\\
&\lesssim \big(\sup_{k\in\Z} \norm{B_{k}}_{L^{q_{0}}\to L^{q_{0}}}^{\frac{2-q_{1}}{2-q_{0}} \frac{q_{0}}{2}}\big) \norm{B_{*}}_{L^{q_{1}}\to L^{q_{1}}}^{\frac{2-q_{1}}{2}} {\bfa}
\norm{f}_{L^p},
\end{align*}
where in the last step we have used Lemma~\ref{lem:duo+rubio:square}, with $L=1$ and $M_{k, 0}:=B_k$.
\end{proof}

\subsection{Short variations}
We will work with a sequence of linear operators $(A_t)_{t\in\bbU}$ (not necessarily positive) acting on $L^1(X)+ L^{\infty}(X)$. However, positive operators will be distinguished in our proof and in this case we can also proceed as before using some bootstrap arguments.

For every $k\in\Z$ and $t\in[2^k, 2^{k+1}]$ we will use the following notation
\[
\Delta((A_s)_{s\in\bbI})_tf:=\Delta(A_t)f:=A_tf-A_{2^k}f.
\]

\begin{theorem}
\label{thm:short-var}
Assume that $(X,\calB,\frakm)$ is a $\sigma$-finite measure space endowed with a sequence of linear operators $(S_j)_{j\in\Z}$ satisfying \eqref{eq:3} and \eqref{eq:4}.
Let $(A_{t})_{t\in\bbU}$ be a family of linear operators such that the square function estimate
\begin{equation}
\label{eq:sq-L2}
\norm[\Big]{ \bigl( \sum_{k\in\Z}\sum_{m=0}^{2^l-1} \abs{(A_{2^k+{2^{k-l}(m+1)}} - A_{2^k+{2^{k-l}m}}) S_{j+k} f}^{2} \bigr)^{1/2}}_{L^2}
\le
2^{-\frac{l}{2}} a_{j, l}\norm{f}_{L^2}
\end{equation}
holds for all $j\in\Z$ and $l\in\N$ with some numbers $a_{j, l}\ge0$ such that for every $0<\varepsilon<\rho$ we have
\begin{align}
\label{eq:21}
\sum_{l\ge0}\sum_{j\in\Z}2^{-\varepsilon l} a_{j, l}^{\rho}<\infty.
\end{align}
\begin{enumerate}
\item\label{thm:short-var:case-vv}
Let $1<q_0<2$ and $4<q_{\infty}<\infty$,
and suppose that for each $q_0 \leq p \leq q_{\infty}$ the vector-valued estimate
\begin{equation}
\label{eq:short-long-square-hypothesis}
\norm[\Big]{ \big( \sum_{k\in\Z} \abs{A_{2^k(1+t)}f_{k}}^{2} \big)^{1/2} }_{L^p}
\lesssim
\norm[\Big]{ \big( \sum_{k\in\Z} \abs{f_{k}}^{2} \big)^{1/2} }_{L^p}
\end{equation}
holds uniformly in $t\in\bbU\cap[0, 1]$.
Then for each $\frac{3}{1+1/q_0} <p <\frac{4}{1+2/q_{\infty}}$ we have
\begin{equation}
\label{eq:short-var}
\norm[\Big]{ \bigl( \sum_{k\in\Z} V^{2}(A_{t}f : t \in [2^k,2^{k+1}] \cap \bbU)^{2} \bigr)^{1/2}}_{L^{p}}
\lesssim
\norm{f}_{L^{p}},
\end{equation}
and for each $4 \leq p < q_{\infty}$ and $r>\frac{p}{2}\frac{q_{\infty}-2}{q_{\infty}-p}$ we have
\begin{equation}
\label{eq:short-r-var}
\norm[\Big]{ \bigl( \sum_{k\in\Z} V^{r}(A_{t}f : t \in [2^k,2^{k+1}] \cap \bbU)^{r} \bigr)^{1/r}}_{L^{p}}
\lesssim
\norm{f}_{L^{p}}
\end{equation}
for all $f\in L^p(X)$.
\item\label{thm:short-var:case-holder}
Let $q_{0} \in [1,2)$ and $\alpha \in [0,1]$ be such that $\alpha q_{0} \leq 1$.
Suppose that we have the operator norm H\"older type condition
\begin{equation}
\label{eq:short-Holder}
\norm{A_{t+h}-A_{t}}_{L^{q_{0}}\to L^{q_{0}}} \lesssim \bigg(\frac{h}{t}\bigg)^{\alpha},
\quad
t,t+h \in \bbU, \text{ and } \ h\in(0, 1].
\end{equation}
Then for every exponent $q_1$ satisfying
\begin{equation}
\label{eq:26}
q_0\le 2 - \frac{2-q_{0}}{2-\alpha q_{0}} < q_1 \leq 2,
\end{equation}
and such that
\begin{equation}
\label{eq:short-long-max-hypothesis-pos}
\norm{\Delta((A_s)_{s\in\bbU})_{*,\bbU}}_{L^{q_1}\to L^{q_1}} \lesssim 1
\end{equation}
we have for all $f\in L^p(X)$ with $p=q_1$ that the estimate \eqref{eq:short-var} holds with the implicit constant which is a constant multiple of
\[
\bfa:=\sum_{l\ge0}\sum_{j\in\Z}2^{-\big(
\alpha \frac{2-q_{1}}{2-q_{0}} \frac{q_{0}}{2}
+
\frac{1}{2}\frac{q_{1}-q_{0}}{2-q_{0}}-\frac{2-q_{1}}{2-q_{0}} \frac12\big)
l} a_{j, l}^{\frac{q_{1}-q_{0}}{2-q_{0}}}<\infty.
\]

\item\label{thm:short-var:case-holder-positive} Moreover, if $(A_{t})_{t\in\bbU}$ is a family of positive linear operators, then the condition \eqref{eq:short-long-max-hypothesis-pos} may be replaced by a weaker condition
\begin{equation}
\label{eq:short-long-max-hypothesis}
\norm{A_{*,\D}}_{L^{q_1}\to L^{q_1}} \lesssim 1
\end{equation}
and the estimate \eqref{eq:short-var} holds as well with the implicit constant which is a constant multiple of $1+\bfa^{2/q_1}$.
\end{enumerate}
In the case of convolution operators on an abelian group $\GG$ the implication from \eqref{eq:short-long-max-hypothesis} to \eqref{eq:short-var} also holds with $p$ replaced by $p'$.

\end{theorem}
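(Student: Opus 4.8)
The plan is to reuse the machinery developed for part~\ref{thm:short-var:case-holder-positive} essentially verbatim, exploiting that the underlying vector-valued inequality is available up to the dual exponent for convolution operators. Recall the skeleton of that proof: by Lemma~\ref{lem:lewko-lewko} with $\frakg(t)=A_{2^k+t}f(x)$ and $r=2$, summed over $k\in\Z$ (this is the instance of \eqref{eq:10}), the left-hand side of \eqref{eq:short-var} is dominated by $\sqrt2\sum_{l\ge0}G_l(f)$, where
\[
G_l(f):=\norm[\Big]{\bigl(\sum_{k\in\Z}\sum_{m=0}^{2^l-1}\abs{M^{(l)}_{k,m}f}^2\bigr)^{1/2}}_{L^p},
\qquad
M^{(l)}_{k,m}:=A_{2^k+2^{k-l}(m+1)}-A_{2^k+2^{k-l}m}.
\]
Inserting the resolution of identity \eqref{eq:3} and applying Lemma~\ref{lem:duo+rubio:square} with $L=2^l$ (its hypothesis \eqref{eq:duo+rubio:off-diag} being supplied by \eqref{eq:sq-L2} with the sequence $2^{-l/2}a_{j,l}$) bounds $G_l(f)$ by the product of a power $2^{l\frac{2-q_1}{2-q_0}\frac12}$, the $L^{q_0}\to L^{q_0}$ norms of the $M^{(l)}_{k,m}$ (which are $\lesssim 2^{-\alpha l}$ by \eqref{eq:short-Holder}), the power $\norm{\,\cdot\,}_{L^{q_1}\to L^{q_1}}^{\frac{2-q_1}{2}}$ of the associated maximal operator, and $\sum_{j\in\Z}(2^{-l/2}a_{j,l})^{\frac{q_1-q_0}{2-q_0}}$; collecting the powers of $2^l$ reproduces precisely the exponent defining $\bfa$ in part~\ref{thm:short-var:case-holder}, so that $\sum_{l\ge0}G_l(f)\lesssim\bfa\,\norm{f}_{L^p}$ once the $L^{q_1}$ maximal norm is finite.

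The only new ingredient needed is the observation that the $M^{(l)}_{k,m}$, being differences of the translation-invariant operators $A_t$, are convolution operators, so that the conclusion \eqref{eq:1} of Lemma~\ref{lem:duo+rubio:square} holds throughout $q_1\le p\le q_1'$. The point to emphasize is that, in \eqref{eq:1}, passing from $p=q_1$ to $p=q_1'$ affects only the outer Lebesgue exponent: the $L^{q_0}\to L^{q_0}$ norms of the $M^{(l)}_{k,m}$ and the $L^{q_1}\to L^{q_1}$ norm of the maximal operator appearing on the right are unchanged. Consequently the per-$l$ estimate at $p=q_1'$ is formally the same as at $p=q_1$, and the summation over $l$ via \eqref{eq:21} and \eqref{eq:26} again yields a constant multiple of $\bfa$.

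It remains to feed in a finite bound for the maximal operator at the index $q_1$. By positivity the maximal operator attached to the $M^{(l)}_{k,m}$ is pointwise $\le 2A_{*,\bbU}$, and the proof of part~\ref{thm:short-var:case-holder-positive} already delivers $\norm{A_{*,\bbU}}_{L^{q_1}\to L^{q_1}}\lesssim 1+\bfa^{2/q_1}$: indeed $\abs{A_tf(x)}\le\abs{A_{2^k}f(x)}+V^2(A_sf(x):s\in[2^k,2^{k+1}]\cap\bbU)$ for $t\in[2^k,2^{k+1}]\cap\bbU$, whence, applying this to $\abs f$ and using positivity, $A_{*,\bbU}f\le A_{*,\D}f+\bigl(\sum_{k\in\Z}V^2(A_s\abs f:s\in[2^k,2^{k+1}]\cap\bbU)^2\bigr)^{1/2}$, and the two summands are controlled by the hypothesis \eqref{eq:short-long-max-hypothesis} and by the already established $p=q_1$ case of \eqref{eq:short-var}, respectively. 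Inserting this into the $p=q_1'$ chain gives \eqref{eq:short-var} at $p=q_1'$ with implicit constant $\lesssim(1+\bfa^{2/q_1})^{\frac{2-q_1}{2}}\bfa\lesssim 1+\bfa^{2/q_1}$. I do not expect a genuine obstacle here: the argument is a rerun of part~\ref{thm:short-var:case-holder-positive}, and the only thing needing care is the bookkeeping of the $2^l$-exponents produced by Lemma~\ref{lem:duo+rubio:square}, \eqref{eq:short-Holder} and \eqref{eq:sq-L2}, which is identical to the computation already carried out for $p=q_1$.
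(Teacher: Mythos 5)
Your proposal is correct and follows essentially the paper's own route: the paper dispatches the convolution case with the remark that $p=q_1$ may be replaced by $p=q_1'$ in Lemma~\ref{lem:duo+rubio:square} and in all subsequent arguments, and your write-up is precisely this, with the useful point made explicit that the maximal-operator factor in \eqref{eq:1} remains an $L^{q_1}\to L^{q_1}$ norm, which by positivity, the pointwise bound \eqref{eq:13}, hypothesis \eqref{eq:short-long-max-hypothesis} and the already-established $p=q_1$ case of \eqref{eq:short-var} is $\lesssim 1+\bfa^{2/q_1}$. Your bookkeeping of the $2^l$-exponents coincides with the paper's computation at $p=q_1$, so the summation over $l$ and the final constant $(1+\bfa^{2/q_1})^{\frac{2-q_1}{2}}\bfa\lesssim 1+\bfa^{2/q_1}$ are as in the paper, and there is no gap.
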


Theorem~\ref{thm:short-var} combined with the results formulated in the previous two paragraphs for dyadic scales will allow us to control, in view of \eqref{eq:8}, the cases for general scales. The first part of Theorem~\ref{thm:short-var} gives \eqref{eq:short-var} in a restricted range of $p$'s. If one asks for a larger range, a smoothness condition like in \eqref{eq:short-Holder} must be assumed. Inequality \eqref{eq:short-Holder} combined with maximal estimate \eqref{eq:short-long-max-hypothesis-pos} gives larger range of $p$'s in \eqref{eq:short-var}. If we work with a family of positive operators the condition \eqref{eq:short-long-max-hypothesis-pos} may be relaxed to \eqref{eq:short-long-max-hypothesis} by some bootstrap argument. In the context of discussion after Theorem~\ref{thm:duo+rubio:max} and Theorem~\ref{thm:duo+rubio:jump} let us look at a particular situation of (\ref{thm:short-var:case-holder}) and prove \eqref{eq:short-var}.

Suppose that $(A_t)_{t>0}$ is a family of convolution operators $A_tf=\sigma_{t}*f$ corresponding to a family of finite measures $(\sigma_{t}: t>0)$ on $\R^d$ such that $\sup_{t>0}\norm{\sigma_t}<\infty$ and satisfying \eqref{eq:50} and \eqref{eq:51}. We assume that $\abs{\sigma_{t}}\lesssim \mu_{t}$ for some family of finite positive measures $(\mu_{t}: t>0)$ on $\R^d$ such that $\sup_{t>0}\norm{\mu_t}<\infty$ and satisfying \eqref{eq:42} and \eqref{eq:43} to make sure that condition \eqref{eq:short-long-max-hypothesis-pos} holds. Additionally, let us assume that \eqref{eq:short-Holder} holds with $\alpha=1$ and $q_0=1, 2$. By Plancherel's theorem, \eqref{eq:50} and \eqref{eq:51} we obtain
\begin{align}
\label{eq:45}
\norm{(A_{2^k+{2^{k-l}(m+1)}} - A_{2^k+{2^{k-l}m}})S_{j+k}f}_{L^2}\lesssim \omega(2^{-\abs{j}})\norm{S_{j+k}f}_{L^2}.
\end{align}
Thus \eqref{eq:short-Holder} with $q_0=2$, $t=2^k+{2^{k-l}m}$, $h=2^{k-l}$ combined with \eqref{eq:45} imply
\begin{align}
\label{eq:52}
\norm{(A_{2^k+{2^{k-l}(m+1)}} - A_{2^k+{2^{k-l}m}})S_{j+k}f}_{L^2}\lesssim \min(2^{-l},\omega(2^{-\abs{j}}))\norm{S_{j+k}f}_{L^2}.
\end{align}
Consequently \eqref{eq:sq-L2} holds with $a_{j, l}=\min\Set{1,2^l\omega(2^{-\abs{j}})}$ and Theorem~\ref{thm:short-var} gives the desired conclusion as long as $\bfa=\sum_{l\ge0}\sum_{j\in\Z}2^{-\frac{(q_{1}-1)l}{2}} (\min\Set{1,2^l\omega(2^{-\abs{j}})})^{q_{1}-1}<\infty.
$

\begin{proof}[Proof of Theorem~\ref{thm:short-var}: case (\ref{thm:short-var:case-vv})]
By Minkowski's inequality for $2 \leq s \leq q_{\infty} < \infty$ we have
\begin{align*}
\MoveEqLeft
\norm[\Big]{ \bigl( \sum_{k\in\Z}\sum_{m=0}^{2^l-1} \abs{(A_{2^k+{2^{k-l}(m+1)}} - A_{2^k+{2^{k-l}m}}) f_{k}}^{s} \bigr)^{1/s}}_{L^{q_{\infty}}}^{s}\\
&=
\norm[\Big]{\sum_{m=0}^{2^l-1} \sum_{k\in\Z} \abs{(A_{2^k+{2^{k-l}(m+1)}} - A_{2^k+{2^{k-l}m}}) f_{k}}^{s}}_{L^{{q_{\infty}}/s}}\\
&\leq
\sum_{m=0}^{2^l-1} \norm[\Big]{\sum_{k\in\Z} \abs{(A_{2^k+{2^{k-l}(m+1)}} - A_{2^k+{2^{k-l}m}}) f_{k}}^{s}}_{L^{{q_{\infty}}/s}}\\
&\leq
2^l \sup_{0\leq m < 2^l} \norm[\Big]{\bigl( \sum_{k\in\Z} \abs{(A_{2^k+{2^{k-l}(m+1)}} - A_{2^k+{2^{k-l}m}}) f_{k}}^{s} \bigr)^{1/s}}_{L^{q_{\infty}}}^{s}\\
&\leq
2^{l+s} \sup_{0\leq m \leq 2^l} \norm[\Big]{\bigl( \sum_{k\in\Z} \abs{A_{2^k+{2^{k-l}m}} f_{k}}^{2} \bigr)^{1/2}}_{L^{q_{\infty}}}^{s}\\
&\lesssim
2^{l+s} \norm[\Big]{\bigl( \sum_{k\in\Z} \abs{f_{k}}^{2} \bigr)^{1/2}}_{L^{q_{\infty}}}^{s},
\end{align*}
where we have applied \eqref{eq:short-long-square-hypothesis} in the last step.
Using this with $f_{k}=S_{j+k}f$ and applying \eqref{eq:4} we obtain
\[
\norm[\Big]{ \bigl( \sum_{k\in\Z}\sum_{m=0}^{2^l-1} \abs{(A_{2^k+{2^{k-l}(m+1)}} - A_{2^k+{2^{k-l}m}}) S_{j+k} f}^{s} \bigr)^{1/s}}_{L^{q_{\infty}}}
\lesssim
2^{l/s} \norm{f}_{L^{q_{\infty}}}
\]
for all $2 \leq s \leq q_{\infty} < \infty$.
By interpolation with \eqref{eq:sq-L2} we obtain
\begin{align}
\label{eq:27}
\norm[\Big]{ \bigl( \sum_{k\in\Z}\sum_{m=0}^{2^l-1} \abs{(A_{2^k+{2^{k-l}(m+1)}} - A_{2^k+{2^{k-l}m}}) S_{j+k} f}^{r} \bigr)^{1/r}}_{L^p}
\lesssim
2^{-\frac{\theta l}{2}+\frac{(1-\theta) l}{s}} a_{j, l}^{\theta} \norm{f}_{L^p},
\end{align}
where $0 < \theta \leq 1$ and
$\frac{1}{r} = \frac{\theta}{2} + \frac{1-\theta}{s}$ and
$\frac{1}{p} = \frac{\theta}{2} + \frac{1-\theta}{q_{\infty}}$, so
$\theta=\frac{2}{p}\frac{q_{\infty}-p}{q_{\infty}-2}$. By Lemma~\ref{lem:lewko-lewko} or more precisely by an analogue of
inequality \eqref{eq:10} with $\ell^r$ norm in place of $\ell^2$ norm
and by \eqref{eq:27} we obtain
\begin{equation}
\label{eq:28}
\norm[\Big]{ \bigl( \sum_{k\in\Z} V^{r}(A_{t}f : t \in [2^k,2^{k+1}] \cap \bbU)^{r} \bigr)^{1/r}}_{L^{p}}
\lesssim
\sum_{l\ge0}\sum_{j\in\Z}2^{-\frac{\theta l}{2}+\frac{(1-\theta) l}{s}} a_{j, l}^{\theta} \norm{f}_{L^p}.
\end{equation}
In view of \eqref{eq:21} with
$\varepsilon=\frac{\theta }{2}-\frac{(1-\theta) }{s}$ and $\rho=\theta$ this estimate is summable in $l$ and $j$, provided that
$-\theta/2 + (1-\theta)/s < 0$.
In particular, for $2\leq p < \frac{4}{1+2/q_{\infty}}$ we use $s=2$.
For $4\leq p < q_{\infty}$ we use $s>\frac{q_{\infty}(p-2)}{q_{\infty}-p}$ and then $r>\frac{p}{2}\frac{q_{\infty}-2}{q_{\infty}-p}$.

For $q_0 \in (1,2)$ by Minkowski's inequality and \eqref{eq:short-long-square-hypothesis} we have
\begin{align*}
\MoveEqLeft
\norm[\Big]{ \bigl( \sum_{k\in\Z}\sum_{m=0}^{2^l-1} \abs{(A_{2^k+{2^{k-l}(m+1)}} - A_{2^k+{2^{k-l}m}}) f_{k}}^{2} \bigr)^{1/2}}_{L^{q_0}}\\
&\leq
\sum_{m=0}^{2^l-1} \norm[\Big]{ \big( \sum_{k\in\Z} \abs{(A_{2^k+{2^{k-l}(m+1)}} - A_{2^k+{2^{k-l}m}}) f_{k}}^{2} \big)^{1/2}}_{L^{q_0}}\\
&\leq
2^{l+1} \sup_{0\leq m \leq 2^l} \norm[\Big]{\bigl( \sum_{k\in\Z} \abs{A_{2^k+{2^{k-l}m}} f_{k}}^{2} \bigr)^{1/2}}_{L^{q_0}}\\
&\lesssim
2^l \norm[\Big]{\bigl( \sum_{k\in\Z} \abs{f_{k}}^{2} \bigr)^{1/2}}_{L^{q_0}}.
\end{align*}
Substituting $f_{k}=S_{j+k}f$, applying \eqref{eq:4}, and
interpolating with \eqref{eq:sq-L2} we obtain
\begin{align}
\label{eq:29}
\norm[\Big]{ \bigl( \sum_{k\in\Z}\sum_{m=0}^{2^l-1} \abs{(A_{2^k+{2^{k-l}(m+1)}} - A_{2^k+{2^{k-l}m}}) S_{j+k} f}^{2} \bigr)^{1/2}}_{L^p}
\lesssim
2^{-\frac{\theta l}{2}+(1-\theta) l} a_{j, l}^{\theta} \norm{f}_{L^p},
\end{align}
with $\frac{1}{p} = \frac{\theta}{2} + \frac{1-\theta}{q_{0}}$, for
$0<\theta<1$. Hence $\theta=\frac{2}{p}\frac{p-q_{0}}{2-q_{0}}$ and in
view of \eqref{eq:21} with $\varepsilon=\frac{\theta }{2}-(1-\theta) $ and $\rho=\theta$ this estimate is summable in $l$ and $j$, provided that
$-\theta/2 + (1-\theta) < 0$. The conclusion again follows from
Lemma~\ref{lem:lewko-lewko} and \eqref{eq:29} like in
\eqref{eq:28} with $\frac{3}{1+1/q_0} < p \leq 2$.
\end{proof}

\begin{proof}[Proof of Theorem~\ref{thm:short-var}: case (\ref{thm:short-var:case-holder}) and case (\ref{thm:short-var:case-holder-positive})]
By the monotone convergence theorem we may restrict $k$ in \eqref{eq:short-var} to $\abs{k} \leq K_{0}$ and parameters $t$ to the set $\bbU_{L_0}^k:=\Set{u/2^{L_0} \given u\in\N \text{ and } 2^{k+L_0}\le u\le 2^{k+L_0+1}}$ for some $K_{0} \in \N$ and $L_{0} \in \Z$ as long as we obtain estimates independent of $K_{0}$ and $L_{0}$. Fix $K_{0}, L_{0}$ and let $\bbI := \bigcup_{\abs{k}\le K_0}\bbU_{L_0}^k$. Let $q_{1}$ satisfy \eqref{eq:26} then invoking \eqref{eq:3} and \eqref{eq:1}, with $L=2^l$, we obtain
\begin{align*}
% \label{eq:sq-Lqtheta}
\MoveEqLeft
\norm[\Big]{ \bigl( \sum_{\abs{k} \leq K_{0}}\sum_{m=0}^{2^l-1} \abs{(A_{2^k+{2^{k-l}(m+1)}} - A_{2^k+{2^{k-l}m}}) f}^{2} \bigr)^{1/2}}_{L^p}\\
&\lesssim
2^{\frac{2-q_{1}}{2-q_{0}} \frac{l}{2}} \big(\sup_{\substack{\abs{k}\le K_0,\\
0\le m<2^l}} \norm{A_{2^k+{2^{k-l}(m+1)}} - A_{2^k+{2^{k-l}m}}}_{L^{q_{0}}\to L^{q_{0}}}^{\frac{2-q_{1}}{2-q_{0}} \frac{q_{0}}{2}}\big) \norm{\Delta((A_{s})_{s\in\bbU})_{*,\bbI}}_{L^{q_{1}}\to L^{q_{1}}}^{\frac{2-q_{1}}{2}}\\
&\qquad\cdot
\Bigl( \sum_{j\in\Z} (2^{-\frac{l}{2}} a_{j, l})^{\frac{q_{1}-q_{0}}{2-q_{0}}} \Bigr)
\norm{f}_{L^p}\\
&\lesssim
2^{\frac{2-q_{1}}{2-q_{0}} \frac{l}{2}} \big((2^{-\alpha l})^{\frac{2-q_{1}}{2-q_{0}} \frac{q_{0}}{2}}\big) \norm{\Delta((A_{s})_{s\in\bbU})_{*,\bbI}}_{L^{q_{1}}\to L^{q_{1}}}^{\frac{2-q_{1}}{2}}
2^{-\frac{l}{2}\frac{q_{1}-q_{0}}{2-q_{0}}}\sum_{j\in\Z} a_{j, l}^{\frac{q_{1}-q_{0}}{2-q_{0}}}\norm{f}_{L^p}.
\end{align*}
In order for the right-hand side to be summable in $l$ we need
\[
\frac{2-q_{1}}{2-q_{0}} \frac12
-
\alpha \frac{2-q_{1}}{2-q_{0}} \frac{q_{0}}{2}
-
\frac{1}{2}\frac{q_{1}-q_{0}}{2-q_{0}}
<
0
\]
\[
\iff
(2-q_{1})
-
\alpha (2-q_{1}) q_{0}
-
(q_{1}-q_{0})
<
0.
\]
It suffices to ensure
\[
(2-q_{1}) (1-\alpha q_{0})
-
(q_{1}-q_{0})
<
0
\]
\[
\iff
q_{1} > \frac{2 (1-\alpha q_{0}) + q_{0}}{2-\alpha q_{0}} = 2 - \frac{2-q_{0}}{2-\alpha q_{0}},
\]
and this is our hypothesis \eqref{eq:26}. Hence under this condition by Lemma~\ref{lem:lewko-lewko} we conclude for general operators that
\begin{equation}
\label{eq:short-var:restricted}
\begin{split}
\MoveEqLeft
\norm[\Big]{ \bigl( \sum_{|k|\leq K_{0}} V^{2}(A_{t}f : t\in \bbU_{L_0}^k)^{2}\bigr)^{1/2}}_{L^p}\\
&\lesssim
\sum_{l = 0}^{K_0+L_0} \norm[\Big]{ \bigl( \sum_{\abs{k} \leq K_{0}} \sum_{m=0}^{2^l-1} \abs{(A_{2^k+{2^{k-l}(m+1)}} - A_{2^k+{2^{k-l}m}})f}^{2}\bigr)^{1/2}}_{L^p}\\
&\lesssim
\norm{\Delta((A_{s})_{s\in\bbU})_{*,\bbI}}_{L^{q_{1}}\to L^{q_{1}}}^{\frac{2-q_{1}}{2}}
\bfa\norm{f}_{L^p},
\end{split}
\end{equation}
as desired. For positive operators crude estimates and interpolation show that
\[
B := \norm{A_{*,\bbI}}_{L^p\to L^p} < \infty
\]
with $p=q_{1}$, since $\bbI$ is finite. Note that
\begin{align}
\label{eq:13}
\sup_{t\in\bbI}\abs{A_tf(x)}\le\sup_{t\in\D}\abs{A_tf(x)}+\Big(\sum_{k\in\Z}\sup_{t\in[2^k, 2^{k+1})\cap\bbI}\abs{(A_t-A_{2^k})f(x)}^2\Big)^{1/2}.
\end{align}
Therefore, appealing to \eqref{eq:13}, \eqref{eq:short-long-max-hypothesis} and \eqref{eq:short-var:restricted} we obtain
by a bootstrap argument that $B \lesssim 1 + B^{\frac{2-q_{1}}{2}}\bfa$, since
\[
\norm{\Delta((A_{s})_{s\in\bbU})_{*,\bbI}}_{L^{q_{1}}\to L^{q_{1}}}^{\frac{2-q_{1}}{2}} \lesssim B^{\frac{2-q_{1}}{2}}.
\]
Hence, $B \lesssim 1+\bfa^{2/q_1}$.
In particular, the estimate \eqref{eq:short-var:restricted} becomes uniform in $\bbI \subset \bbU$, and this simultaneously implies \eqref{eq:short-var}.

In the case of convolution operators we may replace $p=q_{1}$ by $p=q_{1}'$ in Lemma~\ref{lem:duo+rubio:square} and all subsequent arguments.
\end{proof}

\section{Applications}
\subsection{Dimension-free estimates for jumps in the continuous setting}
\label{sec:convex}
We begin by providing dimension-free endpoint estimates, for $r=2$, in the main results of \cite{MR3777413}.
Let $G\subset\R^{d}$ be a symmetric convex body. By definition of the averaging operator \eqref{eq:av-op} we have $\calA_{t}^{G} \tilde U = \tilde U \calA_{t}^{U(G)}$, where $\tilde Uf := f \circ U$ is the composition operator
with an invertible linear map $U : \R^{d} \to \R^{d}$.
It follows that all estimates in Section~\ref{sec:introduction} are not affected if $G$ is replaced by $U(G)$.

By \cite{MR868898}, after replacing $G$ by its image under a suitable invertible linear transformation, we may assume that the normalized characteristic function $\mu := \abs{G}^{-1} \one_{G}$ satisfies
\begin{align}
\label{eq:muhat-decay}
\abs{\widehat{\mu}(\xi)}
&\leq C \abs{\xi}^{-1},\\
\label{eq:muhat-near0}
\abs{\widehat{\mu}(\xi)-1}
&\leq C \abs{\xi},\\
\label{eq:muhat-smooth}
\abs{\langle \xi, \nabla \widehat{\mu}(\xi)\rangle}
&\leq C
\end{align}
with the constant $C$ independent of the dimension.
In \cite{MR868898} these estimates were proved with $\abs{L(G) \xi}$ in place of $\abs{\xi}$ on the right-hand side, where $L(G)$ is the isotropic constant corresponding to $G$. The above form is obtained by rescaling.

Then $\calA_{t} := \calA_{t}^{G}$ is the convolution operator with $\mu_t$ and $\widehat{\mu}_{t}(\xi) = \widehat{\mu}(t\xi)$.
The Poisson semigroup is defined by
\[
\widehat{\calP_t f}(\xi)
:=
p_t(\xi)\widehat{f}(\xi),
\quad \text{where}\quad
p_t(\xi) := e^{-2\pi t\abs{\xi}}.
\]
The associated Littlewood--Paley operators are given by $S_{k} := \calP_{2^{k}} - \calP_{2^{k+1}}$.
Their Fourier symbols satisfy
\begin{equation}
\label{eq:Sk-symbols}
\abs{ \widehat{S}_{k}(\xi) } \lesssim \min\Set{ 2^{k}\abs{\xi}, 2^{-k}\abs{\xi}^{-1} },
\end{equation}
where $\widehat{S}_{k}(\xi)$ is the multiplier associated with the operator $S_k$, i.e. $\widehat{S_{k}f}(\xi)=\widehat{S}_{k}(\xi)\widehat{f}(\xi)$. From now on, for simplicity of notation, we will use this convention.
The symbols associated with the Poisson semigroup $P_{k} := \calP_{2^{k}}$ satisfy
\begin{equation}
\label{eq:Pk-symbols}
\abs{ \widehat{P}_{k}(\xi) - 1 } \lesssim \abs{2^{k}\xi}, \quad\text{and}\quad
\abs{ \widehat{P}_{k}(\xi) } \lesssim 2^{-k}\abs{\xi}^{-1}.
\end{equation}

\begin{proof}[Proof of Theorem~\ref{thm:body-dyadic}]
We verify that the sequence $(A_{k})_{k\in\Z}$, where $A_k:=\calA_{2^k}$ satisfies the hypotheses of Theorem~\ref{thm:duo+rubio:max} for every $1 = q_{0} < q_{1} \leq 2$.

The maximal inequality \eqref{eq:5} and the Littlewood--Paley inequality \eqref{eq:4} for the Poisson semigroup with constants independent of the dimension are well-known \cite{MR0252961}.
The jump estimate \eqref{eq:P-jump} was recently established in \cite[Theorem 1.5]{arxiv:1808.04592}.

It remains to verify condition \eqref{eq:6} for the operators $M_{k}:=A_{k}-P_{k}$.
In view of \eqref{eq:muhat-decay}, \eqref{eq:muhat-near0} and \eqref{eq:Pk-symbols}, we have
\[
\abs{\widehat{M}_{k}(\xi)}
\lesssim
\min\Set{\abs{2^{k} \xi}^{-1}, \abs{2^{k}\xi}}.
\]
For $\xi \in \R^{d} \setminus \Set{0}$ let $k_{0} \in \Z$ be such that $\tilde\xi = 2^{k_{0}}\xi$ satisfies $\abs{\tilde\xi} \simeq 1$.
By \eqref{eq:Pk-symbols} it follows that
\begin{equation}
\label{eq:off-diagonal-FT}
\begin{split}
\sum_{k\in\Z} \abs{\widehat{M}_{k}(\xi) \widehat{S}_{k+j}(\xi)}^{2}
&\lesssim
\sum_{k\in\Z} \min\Set{\abs{2^{k} \xi}^{-1}, \abs{2^{k} \xi}}^{2} \min\Set{\abs{2^{k+j} \xi}^{-1}, \abs{2^{k+j} \xi}}^{2}\\
&=
\sum_{k\in\Z} \min\Set{\abs{2^{k} \tilde\xi}^{-1}, \abs{2^{k} \tilde\xi}}^{2} \min\Set{\abs{2^{k+j} \tilde\xi}^{-1}, \abs{2^{k+j} \tilde\xi}}^{2}\\
&\lesssim
\sum_{k\in\Z} \min\Set{2^{-k} , 2^{k} }^{2} \min\Set{(2^{k+j})^{-1}, 2^{k+j}}^{2}\\
&\lesssim
2^{-\delta\abs{j}}
\end{split}
\end{equation}
for $\delta \in (0,2)$ with the implicit constant independent of the dimension.
By Plancherel's theorem this shows that \eqref{eq:6} holds with $a_{j} \lesssim 2^{-\delta \abs{j}/2}$.
\end{proof}

\begin{proof}[Proof of Theorem~\ref{thm:body-short}]
We will apply Theorem~\ref{thm:short-var} with
$A_t:=\calA_t:=\calA_t^G$. By a simple scaling we have
$\calA_{2^k(1+t)}=\calA_{2^k}^{(1+t)G}$. Hence Theorem~\ref{thm:duo+rubio:max}, with $A_k=\calA_{2^k}^{(1+t)G}$,
applies and we obtain the vector-valued
inequality~\eqref{eq:long-vector-valued} for all $1<p<\infty$ and $r=2$, which consequently guarantees \eqref{eq:short-long-square-hypothesis}. It remains to verify the hypothesis \eqref{eq:sq-L2} of Theorem~\ref{thm:short-var}.
We repeat the estimate \cite[(4.23)]{MR3777413}.
By \eqref{eq:muhat-smooth} for $t>0$ and $h>0$ we have
\begin{equation}
\label{eq:55}
\abs[\big]{\widehat{\mu}\big((t+h)\xi\big) - \widehat{\mu}\big(t\xi\big)}
\leq
\int_{t}^{t+h}
\abs{\langle \xi, \nabla \widehat{\mu}(u\xi)\rangle}\dif u
\lesssim
\int_{t}^{t+h}
\frac{\dif u}{u}
\lesssim \frac{h}{t}.
\end{equation}
By the Plancherel theorem this implies
\begin{equation}
\label{eq:short-Lip}
\norm{\calA_{t+h}-\calA_{t}}_{L^2\to L^2} \lesssim \frac{h}{t}.
\end{equation}
This allows us to estimate the square of the left-hand side of \eqref{eq:sq-L2} by
\begin{align*}
LHS\eqref{eq:sq-L2}^{2} &=
\sum_{k \in \Z} \sum_{m=0}^{2^l-1} \norm{(\calA_{2^k+{2^{k-l}(m+1)}} - \calA_{2^k+{2^{k-l}m}}) S_{j+k}f}_{L^2}^{2}\\
&\lesssim
\sum_{k \in \Z} \sum_{m=0}^{2^l-1} 2^{-2l} \norm{ S_{j+k} f}_{L^2}^{2}\\
&=
2^{-l} \sum_{k \in \Z} \norm{ S_{j+k} f}_{L^2}^{2}\\
&\lesssim
2^{-l} \norm{f}_{L^2}^{2}.
\end{align*}

Secondly, by \eqref{eq:muhat-decay} and \eqref{eq:muhat-near0} for every $0\leq m < 2^l$ we have
\[
\abs[\big]{\widehat{\mu}((2^k+{2^{k-l}(m+1)})\xi) - \widehat{\mu}((2^k+{2^{k-l}m})\xi)}
\lesssim
\min \Set{\abs{2^k\xi},\abs{2^k\xi}^{-1}}.
\]
Arguing similarly to \eqref{eq:off-diagonal-FT} we obtain
\begin{align*}
LHS\eqref{eq:sq-L2}^{2}
\lesssim
2^l 2^{-\delta\abs{j}} \norm{f}_{2}^{2}.
\end{align*}
Hence \eqref{eq:sq-L2} holds with $a_{j, l}=\min\Set{1,2^l2^{-\delta\abs{j}/2}}$.
\end{proof}

\begin{proof}[Proof of Theorem~\ref{thm:lq-ball-short}]
By Theorem~\ref{thm:body-dyadic} we have the hypothesis \eqref{eq:short-long-max-hypothesis} of Theorem~\ref{thm:short-var}.
The hypothesis \eqref{eq:sq-L2} was verified in the proof of Theorem~\ref{thm:body-short}.
The remaining hypothesis \eqref{eq:short-Holder} is given by \cite[Lemma 4.2]{MR3777413}, but we give a more direct proof.

Recall that $B^q$ is the unit ball induced by $\ell^q$ norm in $\R^d$. From \cite{MR1042048} (for $1\leq q<\infty$), and \cite{MR3273441} (for $q=\infty$) we use the multiplier norm estimate
\[
\norm{\tilde m}_{M^{p}} \lesssim_{p,q,\alpha} 1,
\quad
\tilde m = (\xi\cdot\nabla)^{\alpha} \widehat{\mu}
\]
for $\alpha\in(0, 1)$ and $p\in (1,\infty)$ with implicit constant independent of the dimension.
For a Lipschitz function $h : (1/2,\infty) \to \R$ such that $\abs{h(t)} \lesssim \abs{t}^{-1}$ and $\abs{h'(t)}\lesssim \abs{t}^{-1}$ fractional differentiation can be inverted by fractional integration:
\[
h(t) = \frac{1}{\Gamma(\alpha)} \int_{t}^{+\infty} (u-t)^{\alpha-1} D^{\alpha}h(u) \dif u,
\quad t>1/2,
\]
see \cite[Lemma 6.9]{arxiv:1602.02015}.
In particular, for $t>1$ we obtain
\[
h(t)-h(1)
=
\frac{1}{\Gamma(\alpha)} \int_{1}^{+\infty} ((u-t)_{+}^{\alpha-1} - (u-1)^{\alpha-1}) D^{\alpha}h(u) \dif u,
\]
where $u_{+} := \max(u,0)$ denotes the positive part.
In view of \eqref{eq:muhat-decay} and \eqref{eq:muhat-smooth} this result can be applied to the function $h(t) = \widehat{\mu}(t\xi)$ for any $\xi\in \R^{d}\setminus\Set{0}$.
Observing $D^{\alpha} h(u) = u^{-\alpha} \tilde m(u\xi)$ we obtain
\[
\widehat{\mu}(t\xi)-\widehat{\mu}(\xi)
=
\frac{1}{\Gamma(\alpha)} \int_{1}^{+\infty} ((u-t)_{+}^{\alpha-1} - (u-1)_{+}^{\alpha-1}) u^{-\alpha} \tilde m(u\xi) \dif u.
\]
On the other hand we have
\[
\int_{1}^{+\infty} \abs{(u-t)_{+}^{\alpha-1} - (u-1)_{+}^{\alpha-1}}{u^{-\alpha}} \dif u
\lesssim_{\alpha}
(t-1)^{\alpha},
\]
and for a Schwartz function $f \in \calS(\R^{d})$ this implies
\begin{align*}
\MoveEqLeft
\norm{\FT^{-1}_{\xi}((\widehat{\mu}(t\xi)-\widehat{\mu}(\xi))\widehat{f}(\xi))}_{L^p}\\
&\leq
\int_{1}^{+\infty} \abs{(u-t)_{+}^{\alpha-1} - (u-1)_{+}^{\alpha-1}} u^{-\alpha} \cdot
\norm{\FT^{-1}_{\xi}(((u\xi\cdot \nabla)^{\alpha}\widehat{\mu})(\xi)\widehat{f}(\xi))}_{L^p} \dif u\\
&\lesssim_{\alpha}
(t-1)^{\alpha} \sup_{u>0}
\norm{\FT^{-1}_{\xi}(((u\xi\cdot \nabla)^{\alpha}\widehat{\mu})(u\xi)\widehat{f}(\xi))}_{L^p}\\
&\lesssim_{\alpha}
(t-1)^{\alpha} \norm{((\xi\cdot \nabla)^{\alpha}\widehat{\mu})(\xi)}_{M^{p}} \norm{f}_{L^p},
\end{align*}
where we have used the Fourier inversion formula and Fubini's theorem in the first step and scale invariance of the multiplier norm in the last step.
Since the multiplier $\widehat{\mu}(t\xi)-\widehat{\mu}(\xi)$ is (qualitatively) bounded on $L^{p}$ with norm $\leq 2$, by density of Schwartz functions this implies
\[
\norm{\widehat{\mu}(t\cdot)-\widehat{\mu}}_{M^{p}} \lesssim_{\alpha} (t-1)^{\alpha},
\]
which by scaling implies the hypothesis \eqref{eq:short-Holder}.
\end{proof}
Finally we emphasize that once Theorem~\ref{thm:body-dyadic} is
proved, alternative proofs of Theorem~\ref{thm:body-short} an Theorem
\ref{thm:lq-ball-short} follow by appealing to the short variational
estimates given in \cite{MR3777413}.
\subsection{Dimension-free estimates for jumps in the discrete setting}
We briefly outline the proof of Theorem~\ref{jump-discrete}. The strategy is much the same as for the proof of Theorem~\ref{thm:body-dyadic} and Theorem~\ref{thm:body-short}. Let
\begin{align*}
\frakm_N(\xi)=\frac{1}{(2N+1)^d}\sum_{m\in Q_{N}}e^{2\pi i m \cdot\xi
},\qquad \text{for}\qquad \xi\in\T^d
\end{align*}
be the multiplier corresponding to the operators $\bfA_N$ defined in \eqref{eq:37}.
Here we remind the reader of the following estimates for $\frakm_N$ established recently in \cite{arxiv:1804.07679}. Namely there is a constant $0<C<\infty$ independent of the dimension such the for every $N, N_1, N_2\in\N$ and for every $\xi\in\T^d\equiv[-1/2, 1/2)^d$ we have
\begin{align}
\label{eq:54}
\begin{split}
\abs{\frakm_N(\xi)}&\le C(N\abs{\xi})^{-1},\\
\abs{\frakm_N(\xi)-1}&\le CN\abs{\xi},\\
\abs{\frakm_{N_1}(\xi)-\frakm_{N_2}(\xi)}&\le C\abs{N_1-N_2}\max\Set[\big]{N_1^{-1},N_2^{-1}},
\end{split}
\end{align}
where $\abs{\cdot}$ denotes the Euclidean norm restricted to $\T^d$.

The discrete Poisson semigroup is defined by
\[
\widehat{\calP_t f}(\xi)
:=
p_t(\xi)\widehat{f}(\xi),
\quad \text{where}\quad
p_t(\xi) := e^{-2\pi t\abs{\xi}_{\rm sin}},
\]
for every $\xi\in\T^d$ and
\begin{equation*}
\abs{\xi}_{\rm sin}:=\Big(\sum_{j=1}^d (\sin (\pi \xi_j))^2\Big)^{1/2}.
\end{equation*}
We set $P_{k} := \calP_{2^{k}}$ and the associated Littlewood--Paley operators are given by $S_{k} := \calP_{2^{k}} - \calP_{2^{k+1}}$.
The maximal inequality \eqref{eq:5} and the Littlewood--Paley inequality \eqref{eq:4} for the discrete Poisson semigroup with constants independent of the dimension follow from \cite{MR0252961}. The jump estimate \eqref{eq:P-jump} for discrete Poisson semigroup was recently proved in \cite[Theorem 1.5]{arxiv:1808.04592}.
Moreover, using $\abs{\xi}\leq \abs{\xi}_{\rm sin}\leq \pi\abs{\xi}$ for $\xi\in\T^d$, we see that the corresponding Fourier symbols $\widehat{S}_{k}(\xi)$ and $\widehat{P}_{k}(\xi)$ satisfy estimates \eqref{eq:Sk-symbols} and \eqref{eq:Pk-symbols} as well.

In order to prove \eqref{eq:39} we have to verify that the sequence $(A_{k})_{k\in\N}$, where $A_k:=\bfA_{2^k}$ satisfies the hypotheses of Theorem~\ref{thm:duo+rubio:max} for every $1 = q_{0} < q_{1} \leq 2$. Taking into account \eqref{eq:54}, \eqref{eq:Sk-symbols} and \eqref{eq:Pk-symbols} (associated with the discrete Poisson semigroup) it suffices to proceed as in the proof of Theorem~\ref{thm:body-dyadic}. To prove \eqref{eq:38} we argue as in the proof of Theorem~\ref{thm:body-short}.

\subsection{Jump inequalities for the operators of Radon type}
\label{sec:radon}
In this section we prove Theorem~\ref{thm:radon-av} and Theorem~\ref{thm:radon-sing}. By the lifting procedure for the Radon transforms described in \cite[Chapter 11, Section 2.4]{MR1232192} we can assume without loss of generality that our polynomial mapping $P(x):=(x)^{\Gamma}$ is the canonical polynomial mapping for some $\Gamma \subset \N_{0}^{k} \setminus \Set{0}$ with lexicographical order, given by
\[
\R^k \ni x = (x_{1},\dotsc,x_{k}) \mapsto (x)^{\Gamma}:= (x_1^{\gamma_1} \dotsm x_k^{\gamma_k} : \gamma\in\Gamma) \in \R^{\Gamma},
\]
where $\R^{\Gamma}:=\R^{\card{\Gamma}}$ is identified with the space of all vectors whose coordinates are labeled by multi-indices
$\gamma=(\gamma_1, \dotsc, \gamma_k)\in\Gamma$.

Throughout what follows $A$ is the diagonal $\card{\Gamma} \times \card{\Gamma}$ matrix such that $(A x)_\gamma = \abs{\gamma} x_\gamma$ for every $x\in\R^{\Gamma}$ and let $\frakq_*$ be the quasi-norm associated with $A^{*}=A$, given by
\begin{align*}
\frakq_*(\xi)=\max_{\gamma\in\Gamma}\big(\abs{\xi_{\gamma}}^{\frac{1}{\abs{\gamma}}}\big), \quad
\text{for}\quad \xi\in\R^{\Gamma}.
\end{align*}
We shall later freely appeal, without explicit mention, to the discussions after
Theorem~\ref{thm:duo+rubio:max}, Theorem~\ref{thm:duo+rubio:jump} and Theorem~\ref{thm:short-var}
with $d=\card{\Gamma}$, $A$ and $\frakq_*$ as above.
\begin{proof}[Proof of Theorem~\ref{thm:radon-av}]
Let $\calM_{t}:=\calM_{t}^P$, where $P(x)=(x)^{\Gamma}$. Observe that $\calM_t$ is a convolution operator with a probability measure $\mu_t$, whose Fourier transform is defined by
\begin{align*}
\widehat{\mu}_t(\xi):=\frac{1}{\abs{\Omega_{t}}}\int_{\Omega_{t}} e^{-2\pi i \xi \cdot
(y)^{\Gamma}}\dif y,\quad \text{for}\quad \xi\in\R^{\Gamma}.
\end{align*}
Condition \eqref{eq:43} with $\omega(t)=t^{1/d}$ follows from Proposition~\ref{prop:vdC-multidim} and Lemma~\ref{lem:boundary-of-convex}.
It is not difficult to see that \eqref{eq:42} also holds.

In order to prove \eqref{eq:41} it suffices, in view of \eqref{eq:8}, to show inequality \eqref{eq:duo+rubio:jump} with $A_k:=\calM_{2^k}$ and inequality \eqref{eq:short-var} with $A_t:=\calM_{t}$ for every $1 = q_{0} < q_{1} \leq 2$. We have already seen that \eqref{eq:16} holds, hence \eqref{eq:duo+rubio:jump} holds and we are done.
We now show \eqref{eq:short-var}. For this purpose note that \eqref{eq:short-Holder} holds for all $1\le q_0<\infty$. This combined with \eqref{eq:42} and \eqref{eq:43} permits us to prove \eqref{eq:45} and \eqref{eq:52}, which imply \eqref{eq:sq-L2} and Theorem~\ref{thm:short-var} yields the conclusion.
\end{proof}

\begin{proof}[Proof of Theorem~\ref{thm:radon-sing}]
Let $\calH_{t}:=\calH_{t}^P$, where $P(x)=(x)^{\Gamma}$. Denote the Fourier multiplier corresponding to the truncated singular Radon transform by
\begin{equation}
\label{eq:FT-cont-sing}
\Psi_t(\xi) := \int_{\R^k\setminus \Omega_{t}} e^{-2\pi i \xi \cdot
(y)^{\Gamma}} K(y) \dif y,\quad \text{for}\quad \xi\in\R^{\Gamma}.
\end{equation}

For a fixed $\kappa\in (0,1)$ we claim
\begin{equation}
\label{eq:psi-decay-infty}
\begin{split}
\abs{ \Psi_{t}(\xi)-\Psi_{s}(\xi) }
&\lesssim_{\kappa}
\abs{ t^A \xi }_{\infty}^{-1/d} + \omega_K(\abs{ t^A \xi }_{\infty}^{-1/d})\\
&\lesssim (t\frakq_*(\xi))^{-1/d}+\omega_K((t\frakq_*(\xi))^{-1/d}),\quad \text{if}\quad t\frakq_*(\xi)\ge1,
\end{split}
\end{equation}
for all $s,t\in (0,\infty)$ such that $\kappa t\leq s\leq t$. Indeed, by Proposition~\ref{prop:vdC-multidim} we obtain
\begin{align*}
\abs{ \Psi_{t}(\xi)-\Psi_{s}(\xi) }&=\abs[\bigg]{\int_{\Omega_{t}\setminus \Omega_{s}} e^{-2\pi i \xi \cdot
(y)^{\Gamma}} K(y) \dif y}\\
&\lesssim
\sup_{v\in\R^{k} : \abs{v} \leq t\Lambda^{-1/d}} \int \abs{(\ind{\Omega_t\setminus\Omega_s}K)(y)-(\ind{\Omega_t\setminus\Omega_s} K)(y-v)} \dif y
\end{align*}
with $\Lambda=\sum_{\gamma\in\Gamma}t^{\abs{\gamma}}\abs{\xi_{\gamma}}$.
The claim \eqref{eq:psi-decay-infty} clearly holds for $\Lambda\leq 1$.
If $\Lambda\ge1$, then for a fixed $v$ we use \eqref{eq:smoothness} and the fact that
$\Omega_t\setminus\Omega_s\subseteq B(0, t)\setminus B(0,c_{\Omega}\kappa t)$
to estimate the contribution of $y$ such that $y,y-v \in \Omega_t\setminus\Omega_s$.
On the set of $y$ such that exactly one of $y,y-v$ is contained in $\Omega_t\setminus\Omega_s$ we use \eqref{eq:size}; the measure of this set is bounded by a multiple of $t^{k-1} \abs{v}$ due to Lemma~\ref{lem:boundary-of-convex}.
This finishes the proof of \eqref{eq:psi-decay-infty}.

Additionally, we have
\begin{align}
\label{eq:25}
\abs{\Psi_{t}(\xi)-\Psi_{s}(\xi)}
\lesssim
\abs{t^{A} \xi}_{\infty}^{1/d}\lesssim (t\frakq_*(\xi))^{1/d}+\omega_K((t\frakq_*(\xi))^{1/d}),\quad \text{if}\quad t\frakq_*(\xi)\le1
\end{align}
due to the cancellation condition \eqref{eq:cancel1} and \eqref{eq:size}.

To prove \eqref{eq:48} we fix $\theta\in(0, 1]$ and $p\in\Set{1+\theta, (1+\theta)'}$ and invoking
\eqref{eq:8} it suffices to prove inequalities \eqref{eq:53} and \eqref{eq:56}.
Inequality \eqref{eq:53} will follow from \eqref{eq:15} with $q_0=1$, $q_1=1+\theta$ and $B_j:=\calH_{2^j}-\calH_{2^{j+1}}$ upon expressing $\calH_{2^k}$ as a telescoping series like in \eqref{eq:30}. Inequality \eqref{eq:56} will be a consequence of \eqref{eq:short-var} with $q_0=1$, $q_1=1+\theta$ and $A_t:=\calH_{t}$. Let $(\sigma_{t}: t>0)$ be a family of measures
defined by
\begin{align}
\label{eq:223}
\sigma_{t}*f(x)
=
\int_{\Omega_{t}\setminus \Omega_{2^k}} f(x-(y)^{\Gamma}) K(y) \dif
y,\quad \text{for every}\quad t\in[2^k, 2^{k+1}],
\quad k\in\Z.
\end{align}

Estimates \eqref{eq:psi-decay-infty} and \eqref{eq:25} allow us to verify \eqref{eq:50} and \eqref{eq:51} respectively with $\omega(t):=t^{1/d}+\omega_K(t^{1/d})$. Moreover $\abs{\sigma_{2^k}}\lesssim \mu_{2^{k}}$, where $\mu_{t}$ is the measure associated with the averaging operator $\calM_{t}$. Hence the discussion after Theorem~\ref{thm:duo+rubio:jump} guarantees that inequality \eqref{eq:15} holds, since $B_kf=\sigma_{2^{k+1}}*f$. To prove
\eqref{eq:short-var} it suffices to note that \eqref{eq:short-Holder} holds for all $1\le q_0<\infty$. Moreover inequalities \eqref{eq:45} and \eqref{eq:52} remain true for $A_t=\calH_{t}$. Then Theorem~\ref{thm:short-var} completes the proof.
\end{proof}

\appendix

\section{Neighborhoods of boundaries of convex sets}
\label{sec:convex-boundary}
We will show how to control the measure of neighborhoods of the boundaries of convex sets.
The proof of the lemma below is based on a simple Vitali covering argument.
\begin{lemma}
\label{lem:boundary-of-convex}
Let $\Omega\subset\R^{k}$ be a bounded and convex set and let $0<s\lesssim \diam(\Omega)$.
Then
\[
\meas{\Set{ x \in \R^{k} \given \dist(x,\partial\Omega) < s }}
\lesssim_{k}
s \diam(\Omega)^{k-1}.
\]
The implicit constant depends only on the dimension $k$, but not on the convex set $\Omega$.
\end{lemma}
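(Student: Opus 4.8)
The plan is to run a simple Vitali-type covering of $\partial\Omega$ and reduce everything to a counting estimate for $s$-separated subsets of $\partial\Omega$. Set $R:=\diam(\Omega)$ and fix a maximal set $\Set{x_i}_{i=1}^{N}\subset\partial\Omega$ with $\abs{x_i-x_j}\ge s$ for $i\ne j$. By maximality every point of $\partial\Omega$ lies within distance $s$ of some $x_i$, so if $\dist(x,\partial\Omega)<s$ then picking $y\in\partial\Omega$ with $\abs{x-y}<s$ and $x_i$ with $\abs{y-x_i}\le s$ gives $x\in B(x_i,2s)$. Hence
\[
\meas{\Set{x\in\R^{k} \given \dist(x,\partial\Omega)<s}}\le N\cdot\omega_k(2s)^{k},
\]
where $\omega_k=\meas{B(0,1)}$, and the whole lemma follows once we show the cardinality bound $N\lesssim_k (R/s)^{k-1}$; combined with the hypothesis $s\lesssim R$ this yields the claimed estimate $\lesssim_k sR^{k-1}$.

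To bound $N$, first I would group the indices by the direction of an outer normal. For each $i$ choose a unit vector $n_i$ normal to a supporting hyperplane of $\Omega$ at $x_i$; such an $n_i$ exists because $\Omega$ is convex (even if $\Omega$ is not full-dimensional). Covering $S^{k-1}$ by $O_k(1)$ spherical caps of radius $\tfrac12$ partitions the indices into $O_k(1)$ classes, so it suffices to bound, for a fixed $\nu\in S^{k-1}$, the number of $i$ with $\abs{n_i-\nu}\le\tfrac12$. For such a family I claim the orthogonal projections $\pi_{\nu^{\perp}}(x_i)$ onto the hyperplane $\nu^{\perp}$ are still $\tfrac{s}{2}$-separated. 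Granting the claim, all these projections lie in $\pi_{\nu^{\perp}}(\Omega)\subset\nu^{\perp}\cong\R^{k-1}$, a set of diameter $\le R$, so a trivial volume-packing argument (disjoint $(s/4)$-balls inside an $(R+\tfrac{s}{2})$-ball of $\R^{k-1}$) bounds their number by $C_k(R/s)^{k-1}$; summing over the $O_k(1)$ caps gives $N\lesssim_k (R/s)^{k-1}$.

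The main point — and the only step needing care — is the separation-under-projection claim. Suppose $\abs{n_i-\nu}\le\tfrac12$ and $\abs{n_j-\nu}\le\tfrac12$, write $x_i-x_j=a\nu+b$ with $b\perp\nu$, and assume $a\ge0$ after possibly swapping $i$ and $j$. If $\abs{b}<\tfrac{s}{2}$, then $\abs{x_i-x_j}\ge s$ forces $a\ge\tfrac{\sqrt{3}}{2}\abs{x_i-x_j}$, and since $\nu\cdot n_j\ge 1-\tfrac18$ we get
\[
(x_i-x_j)\cdot n_j = a(\nu\cdot n_j)+b\cdot n_j \ge \tfrac{\sqrt{3}}{2}\abs{x_i-x_j}\cdot\tfrac{7}{8}-\abs{b} > 0 ,
\]
which contradicts the supporting hyperplane inequality $(x_i-x_j)\cdot n_j\le0$ at $x_j$ (valid because $x_i\in\overline{\Omega}$). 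Hence $\abs{b}=\abs{\pi_{\nu^{\perp}}(x_i-x_j)}\ge\tfrac{s}{2}$, proving the claim. I expect the remaining ingredients — the maximal-separated-set bookkeeping, the $O_k(1)$ cap cover of $S^{k-1}$, and the packing count in $\R^{k-1}$ — to be entirely routine; the geometric heart is exactly this use of a supporting hyperplane to convert a near-collision of projections into near-parallelism with $\nu$.
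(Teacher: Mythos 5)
Your argument is correct, and it takes a genuinely different route from the paper. The paper also starts with a covering reduction (Vitali, disjoint balls $B(y,s)$ centered on $\partial\Omega$), but its geometric heart is the contractivity of the nearest-point projection onto $\cl\Omega$: it lifts the disjoint balls to points of the circumscribed sphere $\partial B(0,r)$, $r=\diam\Omega$, where the transplanted balls stay disjoint and sit inside the annulus $\Set{r-s<\abs{x}<r+s}$, whose measure is $\lesssim s\,r^{k-1}$; no normals or case analysis are needed. You instead prove the quantitative packing statement that any $s$-separated subset of $\partial\Omega$ has at most $C_k(\diam\Omega/s)^{k-1}$ points, via supporting hyperplanes: a cap decomposition of the normal directions plus the observation that two boundary points with nearly parallel outer normals cannot be nearly aligned with that normal direction (your strict inequality $(x_i-x_j)\cdot n_j>0$ versus the supporting inequality $\le 0$ is checked correctly, the constants $\tfrac{\sqrt3}{2}$ and $\tfrac78$ do the job, and the WLOG swap is legitimate since the roles of $n_i$ and $n_j$ are symmetric and $\abs{b}$ is unchanged). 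Both proofs use convexity exactly once — the paper through contractivity of the metric projection, you through the existence of supporting hyperplanes (which, as you note, also covers degenerate, lower-dimensional $\Omega$). The paper's proof is shorter and avoids the cap bookkeeping; yours yields the slightly more explicit intermediate fact about $s$-nets on $\partial\Omega$, which is of independent use. Two cosmetic points: the projections lie in $\pi_{\nu^{\perp}}(\cl\Omega)$ rather than $\pi_{\nu^{\perp}}(\Omega)$ (irrelevant for the diameter bound), and you should note that $N$ is finite because $\partial\Omega$ is bounded, so the maximal separated set exists by a greedy choice.
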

\begin{proof}
Let $r=\diam\Omega$. By translation we may assume $\Omega\subseteq B(0,r)$, where $B(y,s)$ denotes an open ball centered at $y\in\R^k$ with radius $s>0$. Notice
\[
\Set{ x \in \R^{k} \given \dist(x,\partial\Omega) < s }\subseteq \bigcup_{y\in\partial\Omega} B(y,s).
\]
By the Vitali covering lemma there exists a finite subset $Y\subset\partial\Omega$ such that the balls $B(y,s)$ with $y\in Y$, are
pairwise disjoint and
\[
\meas[\Big]{\bigcup_{y\in\partial\Omega} B(y,s)} \lesssim \meas[\Big]{\bigcup_{y\in Y} B(y,s)}.
\]
Consider the nearest-point projection $P : \R^{k}\to \cl \Omega$, that is,
$P(x)=x'$, where $x'\in \cl \Omega$ is the unique point such that $\abs{x-x'}=\dist(x, \cl \Omega)$. It is well-known that $P$ is well-defined and contractive with respect to the Euclidean metric. The restriction of
$P$ to the sphere $\partial B(0,r)$ defines a surjection
$P_{\partial} : \partial B(0,r) \to \partial\Omega$. This follows from
the fact that for every point $x\in\partial\Omega$ there exists a
linear functional $\phi:\R^{k}\to\R$ such that $\phi(y)\leq \phi(x)$
for every
$y\in \cl \Omega$, see e.g.\ \cite[Corollary 11.6.1]{MR0274683}). For each
$y\in Y$ we choose $z(y)\in \partial B(0,r)$ such that $P_{\partial}(z(y))=y$.
Then the balls $B(z(y),s)$ are pairwise disjoint in view of
the contractivity of $P$ and contained in the set
\[
\Set{ x\in\R^k \given r-s < \abs{x} < r+s}
\]
that has measure $\lesssim s(r+s)^{k-1}$. But the union of the balls
$B(z(y),s)$ has the same measure as $\bigcup_{y\in Y} B(y,s)$, and the
conclusion follows.
\end{proof}

\section{Estimates for oscillatory integrals}
\label{sec:vdc}
We present the following variant of van der Corput's oscillatory integral
lemma with a rough amplitude function.
\begin{lemma}
\label{lem:vdC}
Given an interval $(a,b)\subset\R$ suppose that $\phi : (a,b)\to\R$ is a smooth function
such that $\abs{\phi^{(k)}(x)}\gtrsim \lambda$ for every $x\in(a, b)$ with some
$\lambda>0$. Assume additionally that
\begin{itemize}
\item either $k\geq 2$,
\item or $k=1$ and $\phi'$ is monotonic.
\end{itemize}
Then for every locally integrable function $\psi : \R\to\C$ we have
\[
\abs[\bigg]{\int_{a}^{b} e^{i \phi(x)} \psi(x) \dif x}
\lesssim_{k}
\inf_{a\leq x\leq b} \int_{x-\lambda^{-1/k}}^{x+\lambda^{-1/k}}\abs{\psi(y)}\dif y
+
\lambda^{1/k} \int_{-\lambda^{-1/k}}^{\lambda^{-1/k}} \int_{a}^{b} \abs{\psi(x)-\psi(x-y)} \dif x\dif y.
\]
\end{lemma}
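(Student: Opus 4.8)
The plan is to compare $\psi$ with a mollification at the critical scale $\delta:=\lambda^{-1/k}$, estimate the resulting smooth main term by the classical van der Corput lemma, and absorb the error into the second term on the right‑hand side. First I would reduce to the case of smooth $\psi$ by a density argument: both sides of the asserted inequality depend on $\psi$ only through its restriction to the compact interval $[a-\delta,\,b+\delta]$, and both depend continuously on $\psi$ with respect to convergence in $L^{1}$ of that interval, so it suffices to treat $\psi\in C^{\infty}$. Next, fix a non‑negative $\eta\in C^{\infty}_c(\R)$ supported in $[-1,1]$ with $\int\eta=1$, put $\eta_\delta(y):=\delta^{-1}\eta(y/\delta)$, $\psi_\delta:=\psi*\eta_\delta$, and write
\[
\int_a^b e^{i\phi}\psi = \int_a^b e^{i\phi}(\psi-\psi_\delta) + \int_a^b e^{i\phi}\psi_\delta =: I + II .
\]

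For the error term $I$, the identity $\psi(x)-\psi_\delta(x)=\int(\psi(x)-\psi(x-y))\eta_\delta(y)\,\dif y$ (valid since $\int\eta_\delta=1$), together with $\eta_\delta(y)\lesssim\delta^{-1}\ind{\abs{y}\le\delta}$ and $\delta^{-1}=\lambda^{1/k}$, gives
\[
\abs{I}\le\int_a^b\int\abs{\psi(x)-\psi(x-y)}\,\eta_\delta(y)\,\dif y\,\dif x \lesssim \lambda^{1/k}\int_{-\delta}^{\delta}\int_a^b\abs{\psi(x)-\psi(x-y)}\,\dif x\,\dif y ,
\]
which is controlled by the second term on the right of the lemma.

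For the main term $II$, since $\psi_\delta$ is smooth I would use the amplitude form of the classical van der Corput lemma \cite{MR1232192}. For an arbitrary anchor point $x_0\in[a,b]$ set $F(x):=\int_{x_0}^x e^{i\phi}$; the amplitude‑free van der Corput estimate applies on every subinterval of $(a,b)$ (the hypotheses $\abs{\phi^{(k)}}\gtrsim\lambda$, and monotonicity of $\phi'$ when $k=1$, are inherited by subintervals), so $\sup_{[a,b]}\abs{F}\lesssim_k\lambda^{-1/k}$. Integration by parts, together with the fundamental theorem of calculus used to replace the endpoint values $\psi_\delta(a),\psi_\delta(b)$ by $\abs{\psi_\delta(x_0)}+\int_a^b\abs{\psi_\delta'}$, then yields
\[
\abs{II}=\abs[\Big]{\bigl[F\psi_\delta\bigr]_a^b-\int_a^b F\,\psi_\delta'}\lesssim_k\lambda^{-1/k}\Bigl(\abs{\psi_\delta(x_0)}+\int_a^b\abs{\psi_\delta'}\Bigr)
\]
for every $x_0\in[a,b]$. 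Taking $x_0$ to minimize $\abs{\psi_\delta(x_0)}$ over $[a,b]$ and using $\abs{\psi_\delta(x_0)}\le\norm{\eta}_{L^\infty}\delta^{-1}\int_{x_0-\delta}^{x_0+\delta}\abs{\psi}$ recovers the first term on the right of the lemma. For the remaining contribution, $\int\eta_\delta'=0$ lets us write $\psi_\delta'(x)=\int(\psi(x-y)-\psi(x))\eta_\delta'(y)\,\dif y$, so with $\abs{\eta_\delta'}\lesssim\delta^{-2}\ind{\abs{y}\le\delta}$,
\[
\lambda^{-1/k}\int_a^b\abs{\psi_\delta'}\lesssim\delta^{-1}\int_{-\delta}^{\delta}\int_a^b\abs{\psi(x)-\psi(x-y)}\,\dif x\,\dif y=\lambda^{1/k}\int_{-\delta}^{\delta}\int_a^b\abs{\psi(x)-\psi(x-y)}\,\dif x\,\dif y ,
\]
again the second term on the right. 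Adding the bounds for $I$ and $II$ gives the lemma, with an implicit constant depending only on $k$ (and on the fixed choice of $\eta$).

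The step I expect to require the most care, beyond routine bookkeeping, is the interplay between the classical amplitude estimate and the precise shape of the right‑hand side: van der Corput naturally produces the endpoint (or supremum) value of the amplitude, whereas the statement demands an \emph{infimum} over a window of length $2\lambda^{-1/k}$. This is handled exactly by anchoring the integration by parts at a freely chosen $x_0$, passing from the endpoint values of $\psi_\delta$ to $\abs{\psi_\delta(x_0)}+\norm{\psi_\delta'}_{L^{1}(a,b)}$ via the fundamental theorem of calculus, and then optimizing in $x_0$; once this is in place the rest is the standard ``mollify at the critical scale'' mechanism, with the two error terms collapsing onto the modulus‑of‑continuity expression on the right.
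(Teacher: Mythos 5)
Your proposal is correct and follows essentially the same route as the paper's proof: mollify $\psi$ at the critical scale $\lambda^{-1/k}$, absorb $\psi-\psi_\delta$ and $\lambda^{-1/k}\int_a^b\abs{\psi_\delta'}$ into the modulus-of-continuity term, and bound the mollified part by an integration by parts anchored at an arbitrary $x_0$ combined with the amplitude-free van der Corput estimate, then optimize over $x_0$ to produce the infimum term. The only differences are cosmetic (the preliminary density reduction to smooth $\psi$ is unnecessary since $\psi_\delta$ is smooth anyway, and the paper writes the anchored step as $\rho(x)=\rho(x_0)+\int_{x_0}^{x}\rho'$ plus Fubini rather than as boundary terms), so the argument matches the paper's.
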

\begin{proof}
Let $\eta$ be a smooth positive function with $\supp\eta\subseteq
[-1,1]$ and $\int_{\R}\eta(x)\dif x=1$.
Let $\rho(x) := \psi * \lambda^{1/k} \eta(\lambda^{1/k} x)$, and note that
\[
\abs{\psi(x)-\rho(x)} \leq \lambda^{1/k} \int_{\R} \abs{\psi(x)-\psi(x-y)} \abs{\eta(\lambda^{1/k} y)} \dif y.
\]
Then we may replace $\psi$ by $\rho$ on the left-hand side of the conclusion.
For every $x_{0}\in(a, b)$ by partial integration and the van der
Corput lemma, see for example \cite[Section VIII.1.2]{MR1232192}, we have
\begin{multline*}
\abs[\bigg]{\int_{a}^{b} e^{i \phi(x)} \rho(x) \dif x}
=
\abs[\bigg]{\rho(x_{0}) \int_{a}^{b} e^{i \phi(x)} \dif x
+
\int_{a}^{b} e^{i \phi(x)} \int_{x_{0}}^{x} \rho'(y) \dif y \dif x}\\
\le
\abs[\bigg]{
\rho(x_{0}) \int_{a}^{b} e^{i \phi(x)} \dif x}
+
\abs[\bigg]{
\int_{a}^{x_{0}} \rho'(y) \int_{a}^{y} e^{i \phi(x)} \dif x \dif y}
+
\abs[\bigg]{
\int_{x_{0}}^{b} \rho'(y) \int_{y}^{b} e^{i \phi(x)} \dif x \dif y}\\
\lesssim
\lambda^{-1/k} \bigg( \abs{\rho(x_{0})} + \int_{a}^{b} \abs{\rho'(x)} \dif x \bigg).
\end{multline*}
The latter term is estimated using
\[
\abs{\rho'(x)}
=
\abs{(\psi(x)-\psi) * \lambda^{1/k} \eta(\lambda^{1/k} \cdot)'(x)}
\lesssim
\lambda^{2/k} \int_{\R} \abs{\psi(x)-\psi(x-y)} \abs{\eta'(\lambda^{1/k}y)} \dif y,
\]
and the conclusion follows.
\end{proof}
We will also need a multidimensional version of Lemma~\ref{lem:vdC}. As before $B(y,s)$ denotes an open ball centered at $y\in\R^k$ with radius $s>0$.

\begin{proposition}[{\cite{arxiv:1711.03524}}]
\label{prop:vdC-multidim}
Given $d, k\in\N$, let
$P(x) = \sum_{1\leq \abs{\alpha} \leq d} \lambda_{\alpha} x^{\alpha}$
be a polynomial in $k$ variables of degree at most $d$ with real coefficients.
Let $R>0$ and let $\psi: \R^k\to\C$ be an integrable function supported in $B(0,R/2)$.
Then
\[
\abs[\bigg]{\int_{\R^{k}} e^{i P(x)} \psi(x) \dif x}
\lesssim_{d, k}
\sup_{v\in\R^{k} : \abs{v} \leq R\Lambda^{-1/d}} \int_{\R^{k}} \abs{\psi(x)-\psi(x-v)} \dif x,
\]
where $\Lambda := \sum_{1\leq \abs{\alpha} \leq d} R^{\abs{\alpha}}\abs{\lambda_{\alpha}}$.
\end{proposition}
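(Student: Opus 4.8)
The plan is to reduce the estimate to a one--dimensional van der Corput bound with a rough amplitude --- obtained from Lemma~\ref{lem:vdC} by the classical subdivision argument --- and then to glue the one--dimensional bounds along a suitably chosen family of parallel lines. First normalise: the substitution $x\mapsto Rx$ rescales $\psi$ by $R^{k}$, replaces $\lambda_\alpha$ by $R^{\abs\alpha}\lambda_\alpha$, and leaves both $\Lambda$ and the two sides of the asserted inequality unchanged, so we may take $R=1$; then $\supp\psi\subset B(0,1/2)$ and $\Lambda=\sum_{1\le\abs\alpha\le d}\abs{\lambda_\alpha}$. If $\Lambda\le C_{d,k}$ (a constant to be fixed by the argument below) the estimate is trivial: $\abs{\int e^{iP}\psi}\le\norm\psi_1$, and telescoping translates gives $2\norm\psi_1=\norm{\psi-\psi(\cdot-nv)}_1\le n\norm{\psi-\psi(\cdot-v)}_1$ whenever $\abs v=\Lambda^{-1/d}$ and $n\abs v\ge1$, i.e.\ $n\simeq\Lambda^{1/d}$, so $\norm\psi_1\lesssim_{d,k}\sup_{\abs v\le\Lambda^{-1/d}}\norm{\psi-\psi(\cdot-v)}_1$. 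From now on $\Lambda>C_{d,k}$.

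The heart of the matter is the following: if $Q(t)=\sum_{j=1}^{d}c_jt^{j}$, $H:=\max_{1\le j\le d}\abs{c_j}$ is large, and $g\in L^{1}(\R)$ is supported in an interval of length $\lesssim_{d,k}1$, then $\abs{\int_{\R}e^{iQ}g}\lesssim_{d,k}\sup_{\abs u\le H^{-1/d}}\norm{g-g(\cdot-u)}_1$. I would cover $\supp g$ by $O_{d,k}(1)$ intervals of a fixed small length $\delta_0=\delta_0(d)$; re--centring $Q$ at the centre $t_0$ of such an interval, the coefficient vector of $Q(t_0+\cdot)$ is the image of $(c_j)_j$ under the ``translation'' map, which is unipotent upper--triangular in the degree, hence invertible with $d$--bounded inverse for $\abs{t_0}\lesssim1$, so that vector still has $\ell^{\infty}$--norm $\gtrsim_d H$; letting $m$ be the largest index attaining this new maximum, the terms of degree $>m$ contribute $\lesssim_d H\abs s$ to $Q^{(m)}(t_0+s)$ for $\abs s\le1$, whence $\abs{Q^{(m)}(t_0+s)}\gtrsim_d H$ for $\abs s\le\delta_0$ once $\delta_0$ is small enough. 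Subdividing each piece further at the $\le d-2$ critical points of $Q'$ when $m=1$ (to make $Q'$ monotonic), Lemma~\ref{lem:vdC} applies on each of the $O_d(1)$ resulting pieces $J$ with derivative order $m_J\le d$ and $\lambda\simeq_d H$, giving $\lesssim_d\inf_{s\in\bar J}\int_{s-H^{-1/m_J}}^{s+H^{-1/m_J}}\abs g+H^{1/m_J}\int_{\abs u\le H^{-1/m_J}}\int_J\abs{g(s)-g(s-u)}\dif s\dif u$. Since $m_J\le d$ we have $H^{-1/m_J}\le H^{-1/d}$, so the second term is $\le2\sup_{\abs u\le H^{-1/d}}\norm{g-g(\cdot-u)}_1$; bounding each infimum over $J$ by the average over $J$ and using $H^{-1/m_J}\le\abs J$, the $\inf$--terms sum to $\lesssim_{d,k}H^{-1/d}\norm g_1$, and $H^{-1/d}\norm g_1\lesssim_{d,k}\sup_{\abs u\le H^{-1/d}}\norm{g-g(\cdot-u)}_1$ by the telescoping argument; this is where largeness of $H$ is used, which we have arranged.

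For the reduction to one variable, observe that for every $z$ with $\abs z\le1$ the ``$\ge1$--jet'' $\bigl(\partial^{\alpha}P(z)\bigr)_{1\le\abs\alpha\le d}$ has norm $\gtrsim_{d,k}\Lambda$ --- the map from the coefficients of $P$ to this jet is again linear, unipotent upper--triangular in the degree, and invertible with $d,k$--bounded inverse for $\abs z\le1$ --- so there is $e(z)\in S^{k-1}$ with $\max_{1\le j\le d}\abs{(e(z)\cdot\nabla)^{j}P(z)}\gtrsim_{d,k}\Lambda$; this being an open condition that, after rescaling $P$ by $\Lambda^{-1}$, is uniformly stable in $P$, one can choose a smooth partition of unity $1=\sum_{\nu}\chi_\nu$ on $B(0,1)$ into $O_{d,k}(1)$ pieces, each $\chi_\nu$ supported in a ball $B(y_\nu,r_0)$ with $r_0=r_0(d,k)$, on which one fixed direction $e_\nu$ works. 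Writing $\int e^{iP}(\chi_\nu\psi)=\int_{e_\nu^{\perp}}\bigl(\int_{\R}e^{iP(y+te_\nu)}(\chi_\nu\psi)(y+te_\nu)\dif t\bigr)\dif y$, the restricted phase $P(y+\cdot\,e_\nu)$, after re--centring at the point of $B(y_\nu,r_0)$ lying on that line, has a coefficient of size $\gtrsim_{d,k}\Lambda$, and the restricted amplitude is supported in an interval of length $\lesssim r_0$, so the one--dimensional statement applies on each line; keeping the $u$--integral intact through the integration in $y$ and only then passing to the supremum, one uses $\int_{e_\nu^{\perp}}\int\abs{(\chi_\nu\psi)(y+se_\nu)-(\chi_\nu\psi)(y+(s-u)e_\nu)}\dif s\dif y=\norm{\chi_\nu\psi-(\chi_\nu\psi)(\cdot-ue_\nu)}_1$. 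Summing over the $O_{d,k}(1)$ pieces, together with $\norm{\chi_\nu\psi-(\chi_\nu\psi)(\cdot-v)}_1\le\norm{\psi-\psi(\cdot-v)}_1+\abs v\,r_0^{-1}\norm\psi_1$ and $\Lambda^{-1/d}\norm\psi_1\lesssim_{d,k}\sup_{\abs v\le\Lambda^{-1/d}}\norm{\psi-\psi(\cdot-v)}_1$, yields the claimed bound.

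The main obstacle I expect is the bookkeeping in this last step: the subdivision of a line and the derivative order $m_J$ depend on the base point $y$, so one must organise these into $O_{d,k}(1)$ ``types'' before integrating in $y$; and one must verify that all the auxiliary errors --- those from cutting $\psi$ by the $\chi_\nu$ and the $\inf$--terms from Lemma~\ref{lem:vdC} --- are absorbed, which works precisely because each carries a saving factor $\Lambda^{-1/d}$ and $\Lambda^{-1/d}\norm\psi_1$ is dominated by the right--hand side. A secondary point requiring care is the uniform--in--$P$ construction of the good directions and the partition of unity, which rests on all the auxiliary polynomial expressions in $(e,z)$ being Lipschitz with $d,k$--dependent constants once $P$ is normalised to unit coefficient size.
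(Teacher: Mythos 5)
Your route is genuinely different from the paper's: the paper first mollifies $\psi_R$ at scale $\Lambda^{-1/d}$, then invokes the Carbery--Christ--Wright lemma to get a \emph{single} direction $\xi$ and order $m$ with $\abs{(\xi\cdot\nabla)^{m}P_R}\gtrsim\Lambda$ on the whole unit ball (no partition of unity), applies the classical smooth-amplitude one-dimensional van der Corput estimate fibre-wise, and closes with $\norm{\nabla\phi}_{L^1}\lesssim\Lambda^{1/d}\beta$. You instead keep the rough amplitude throughout, produce good directions only locally via the jet map and a partition of unity of mesh $r_0(d,k)$, and run the rough-amplitude Lemma~\ref{lem:vdC} on each line after subdividing it. Most of this is sound: the recentring/jet linear algebra, the stability of the chosen direction on $B(y_\nu,r_0)$, the telescoping bound $\Lambda^{-1/d}\norm{\psi}_{L^1}\lesssim\sup_{\abs{v}\le\Lambda^{-1/d}}\norm{\psi-\psi(\cdot-v)}_{L^1}$, and the plan to keep the $u$-average inside the $y$-integration (this does work, using the approximate monotonicity of $u\mapsto\norm{g-g(\cdot-u)}_{L^1}$, coming from subadditivity, to pass from the fibre-dependent range $\abs{u}\le\lambda_y^{-1/m_y}$ to the common range $\abs{u}\le\Lambda^{-1/d}$).

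The genuine gap is in the summation of the inf-terms in your one-dimensional lemma. The bound ``infimum over $J$ $\le$ average over $J$'' yields $\min(2H^{-1/m_J},\abs{J})\abs{J}^{-1}\norm{g}_{L^1}$, which is $\lesssim H^{-1/d}\norm{g}_{L^1}$ only when $\abs{J}\gtrsim_d 1$. This holds for the pieces of length $\delta_0$, but not for the sub-intervals created by cutting at the zeros of $Q''$ to make $Q'$ monotonic in the case $m_J=1$: those can be arbitrarily short, the asserted inequality $H^{-1/m_J}\le\abs{J}$ can fail, and neither the average bound nor the trivial bound $\int_J\abs{g}$ gives any gain --- an interval of length $\le H^{-1}$ may carry a fixed fraction of $\norm{g}_{L^1}$, while $\norm{g}_{L^1}$ itself is only $\lesssim H^{1/d}$ times the modulus term, so the loss is a full factor $H^{1/d}$. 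The step is repairable in (at least) two ways: (i) prove the auxiliary fact that a window integral $\int_{s-h}^{s+h}\abs{g}$ is $\lesssim\sup_{\abs{u}\le h}\norm{g-g(\cdot-u)}_{L^1}$, by comparing $g$ with $g*\rho_h$ for a bump $\rho_h$ at scale $h$ and using $\norm{g*\rho_h}_{\infty}\le\norm{(g*\rho_h)'}_{L^1}\lesssim h^{-1}\sup_{\abs{u}\le h}\norm{g-g(\cdot-u)}_{L^1}$ --- note that this is precisely the mollification device on which the paper's own proof rests; or (ii) avoid the monotonicity subdivision altogether: on the $\delta_0$-piece one also has $\abs{Q''}\lesssim_d H$, so the inner oscillatory integral in the proof of Lemma~\ref{lem:vdC} can be estimated by a direct integration by parts ($\lesssim_d H^{-1}$) without monotonicity, and only long pieces ever occur. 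Without one of these repairs the inf-term bookkeeping, as written, fails.
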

We include the proof for completeness.
\begin{proof}
Changing the variables we have $\abs[\big]{\int_{\R^k} e^{i P(x)}
\psi(x) \dif x}=R^k\abs[\big]{\int_{\R^k} e^{i P_R(x)} \psi_R(x)
\dif x}$, where $P_R(x)=\sum_{1\leq \abs{\alpha} \leq d}
R^{\abs{\alpha}}\lambda_{\alpha}x^{\alpha}$,
$\psi_R(x)=\psi(Rx)$ and $\supp \psi_R\subseteq B(0, 1/2)$.
Let us define
\[
\beta=\sup_{v\in\R^{k} : \abs{v} \leq \Lambda^{-1/d}} \int_{\R^{k}} \abs{\psi_R(x)-\psi_R(x-v)} \dif x,
\]
and observe that $\norm{\psi_R}_{L^{1}}\lesssim \beta\Lambda^{1/d}$. So
there is nothing to prove if $\Lambda\lesssim 1$. We assume that
$\Lambda\gtrsim1$. Let $\eta$ be a non-negative smooth bump function
with integral $1$, which is
supported in the ball $B(0, 1/2)$. Then we define
$\rho(x)=\Lambda^{k/d}\eta(\Lambda^{1/d}x)$ and
$\phi(x)=\psi_R*\rho(x)$ and we note
\begin{align*}
\int_{\R^k}\abs{\psi_R(x)-\phi(x)}\dif x\le \Lambda^{k/d}
\int_{\R^k}\int_{\R^k}\abs{\psi_R(x)-\psi_R(x-y)}\dif
x\eta(\Lambda^{1/d}y)\dif y\lesssim \beta.
\end{align*}
The proof will be completed if we show that
\begin{align}
\label{eq:198}
\abs[\bigg]{\int_{\R^{k}} e^{i P_R(x)} \phi(x) \dif x}
\lesssim_{d, k}\beta.
\end{align}
Since $\phi$ is a smooth function supported in $B(0, 1)$ we invoke \cite[Lemma 2.2]{MR1879821} to get the conclusion. Indeed,
\cite[Lemma 2.2]{MR1879821} ensures that there exists a unit vector $\xi\in\R^k$
and an integer $m\in\N$ such that
$\abs{(\xi\cdot\nabla)^{m}P_R}>c_{k,d}\Lambda$ on the unit ball $B(0,1)$
for some $c_{k,d}>0$. We may assume, without loss of generality, that
$\xi=e_1=(1, 0,\dotsc,0)\in\R^k$. Then by the van der
Corput lemma, see for example \cite[Corollary p.334]{MR1232192} we obtain
\begin{align*}
\abs[\bigg]{\int_{\R^{k}} e^{i P_R(x)} \phi(x) \dif x}
&\lesssim\Lambda^{-1/d}\int_{\R^{k-1}\cap B(0, 1)}\bigg(\abs{\phi(1,
x')}+\int_{-1}^1\abs{\partial_1\phi(x_1, x')}\dif x_1\bigg)\dif x'\\
&\lesssim \Lambda^{-1/d}\norm{\nabla\phi}_{L^1},
\end{align*}
since $\supp \phi\subseteq B(0, 1)$ and
$\phi(1, x')=0$ for every $x'\in\R^{k-1}\cap B(0, 1)$.

We now show that $\norm{\nabla\phi}_{L^1}\lesssim
\Lambda^{1/d}\beta$. Indeed, for every $j\in\{1, \ldots, k\}$ we have
\begin{align*}
\norm{\partial_j\phi}_{L^1}&=\int_{\R^k}\abs[\Big]{\int_{\R^k}\psi_R(x-y)\partial_j\rho(y)\dif
y}\dif x\\
&=\int_{\R^k}\abs[\Big]{\int_{\R^k}\big(\psi_R(x)-\psi_R(x-y)\big)\partial_j\rho(y)\dif
y}\dif x\\
& \lesssim \Lambda^{k/d+1/d}\int_{\R^k}\int_{\R^k}\abs{\psi_R(x)-\psi_R(x-y)}\abs{(\partial_j\eta)(\Lambda^{1/d}y)}\dif
x\dif y\\
&\lesssim
\Lambda^{1/d}\beta.
\end{align*}
This proves \eqref{eq:198} and completes the proof of Proposition~\ref{prop:vdC-multidim}.
\end{proof}

\printbibliography

\end{document}